\newcommand{\Hloc}{H^1_{\textnormal{loc}}}
\renewcommand{\d}{\,\mathrm{d}}
\newcommand{\de}{\partial}
\newcommand{\mc}{\mathcal}
\def\enne{\mathbb{N}}
\def\zeta{\mathbb{Y}}
\def\erre{\mathbb{R}}
\renewcommand{\to}{\rightarrow}
\numberwithin{equation}{section}
\newtheorem{thm}{Theorem}[section]
\newtheorem{defi}[thm]{Definition}
\newtheorem{prop}[thm]{Proposition}
\newtheorem{lemma}[thm]{Lemma}
\newtheorem{cor}[thm]{Corollary}
\theoremstyle{definition}
\newtheorem{rmk}[thm]{Remark}
\theoremstyle{remark}
\begin{document}
	
	\author[Filippo Riva \and Lorenzo Nardini]{Filippo Riva$^*$ \and Lorenzo Nardini$^2$}
	\title[Existence and uniqueness for a damped debonding model]{Existence and uniqueness of dynamic evolutions\\for a one--dimensional debonding model with damping}
	\thanks{{$^*$ SISSA, \textsc{Via Bonomea, 265, 34136, Trieste, Italy.}\\ \hspace*{0.62cm}\textit{E-mail address}:\textsf{ firiva@sissa.it.}\\
	\hspace*{0.62cm}{Member of the Gruppo Nazionale per l'Analisi Matematica, la Probabilit\`a e le loro Applicazioni (GNAMPA)\\\hspace*{0.54 cm} of the Istituto Nazionale di Alta Matematica (INdAM).}}\\
	{\hspace*{0.42cm}$^2$ GLI Europe, \textsc{Diakenhuisweg, 29-35, 2033AP, Haarlem, The Netherlands.}\\ \hspace*{0.62cm}\textit{E-mail address}: \textsf{l.nardini@gaminglabs.com}}}
	
	\begin{abstract}
		In this paper we analyse a one--dimensional debonding model when viscosity is taken into account. It is described by the weakly damped wave equation whose domain, the debonded region, grows according to a Griffith's criterion. Firstly we prove that the equation admits a unique solution when the evolution of the debonding front is assigned. Finally we provide an existence and uniqueness result for the coupled problem given by the wave equation together with Griffith's criterion.	
	\end{abstract}

	\maketitle
		
	{\small
		\keywords{\noindent {\bf Keywords:}
			Dynamic debonding; Wave equation in time-dependent domains; Duhamel's principle; Dynamic energy release rate; Energy-dissipation balance; Maximum dissipation principle; Griffith's criterion.
		}
		\par
		\subjclass{\noindent {\bf 2010 MSC:}
			35L05, 
			35Q74, 
			35R35, 
			74H20, 
			74H25, 
			74H30. 
		}
	}

	\pagenumbering{arabic}

\tableofcontents

	\section*{Introduction}
	Analytical models of dynamic debonding involving a single spatial dimension have been developed in the last fifty years as a simplified but still meaningful version of dynamic crack growth based on Griffith's criterion. Starting from the works of Hellan~\cite{Hela, Helb} and Burridge~\&~Keller~\cite{BurrKell} (see also the books of Freund~\cite{Fre90} and Hellan~\cite{Helbook}) it is highlighted how in this field they are one of the few models for which a mathematical formulation provides an exhaustive description of the involved physical processes. Nevertheless they still possess all the relevant features and difficulties of general Fracture Dynamics, such as the time dependence of the domain of the wave equation and the presence of an energy criterion governing the evolution of the system. For the reader who is interested in recent works about dynamic crack propagation we quote for instance \cite{DMLar, DMLarToad, DMLuc, LarSla, RosTho}.\par 
	In the context of one-dimensional models a natural question of great interest in the framework of Fracture Mechanics, widely open in the general case, can be considered in detail. It is commonly referred as the quasistatic limit problem and it concerns whether or not dynamic solutions converge to a quasistatic evolution as inertia tends to zero. We refer to~\cite{Mielke15} for the abstract theory of quasistatic or, more precisely, rate-independent systems.\par	
	In recent years the model of a tape peeled away from a substrate has been studied from different points of view by several authors, see for instance \cite{DMLazNar16, DouMarCha08, LBDM12, LazNarkappa, LazNar,  LazNarinitiation}. In particular, a complete mathematical analysis has been given in~\cite{DMLazNar16, LazNar}, where the authors firstly prove well-posedness of the problem and then show how the quasistatic limit question has a negative answer in the undamped case.\par 
	In this work we contribute to the study of the same debonding model providing existence and uniqueness of dynamic evolutions when a viscous damping is taken into account. The issue of the quasistatic limit will be instead investigated in a future work. The choice of analysing such a damped problem is motivated by several works on different dynamic evolutions where the addition of a suitable dissipation term in the equation makes the convergence towards quasistatic solutions true, see for instance \cite{DMSca} in a case of perfect plasticity, \cite{Sca} for a model of delamination, \cite{LazRoss, Roub13} for some damage models, or~\cite{Ago, Nar15} in a finite--dimensional setting.\par 
	The model we consider can be interpreted in two different ways. The first one, following \cite{DouMarCha08, LBDM12}, describes a dynamic peeling test for a one--dimensional tape, which is assumed to be perfectly flexible and inextensible, initially attached to a flat rigid substrate and placed in some environment which causes a viscous damping on its surface. We assume the deformation of the tape takes place in a vertical plane with orthogonal coordinates $(x,y)$, where the positive $x$-axis represents the substrate as well as the reference configuration of the tape. For the sake of simplicity we neglect incompenetration between the tape and the substrate. During the evolution the tape is described by $x\mapsto(x+h(t,x),u(t,x))$, namely the pair $(h(t,x),u(t,x))$ is the displacement at time $t\ge 0$ of the point $(x,0)$, and it is glued to the substrate on the half line $\{x\ge \ell(t), y=0\}$, where $\ell$ is a nondecreasing function satisfying $\ell_0:=\ell(0)>0$ which represents the debonding front (this implies $h(t,x)=u(t,x)=0$ for $x\ge\ell(t)$). At the endpoint $x=0$ we prescribe a boundary condition $u(t,0)=w(t)$, where $w$ is the time-dependent vertical loading.\par 
	Linear approximation and inextensibility of the tape lead to the following formula for the horizontal displacement $h$:
	\begin{equation*}
		h(t,x) = \frac{1}{2} \int_x^{+\infty} u_x^2(t,\xi)\d \xi;
	\end{equation*}
	furthermore, introducing a parameter $\nu\ge 0$ which tunes viscosity, it turns out that the vertical displacement $u$ solves the problem
	\begin{equation}
\label{problem1}
\begin{cases}
u_{tt}(t,x)-u_{xx}(t,x)+\nu u_t(t,x)=0, \quad& t > 0 \,,\, 0<x<\ell(t),  \\
u(t,0)=w(t), &t>0, \\
u(t,\ell(t))=0,& t>0,\\
u(0,x)=u_0(x),\quad&0<x<\ell_0,\\
u_t(0,x)=u_1(x),&0<x<\ell_0,
\end{cases}
\end{equation}
	where the initial conditions $u_0$ and $u_1$ are given functions.\par
	The second and, in our opinion, much proper and simpler interpretation of the model is the one of a bar, initially glued to a flat rigid support, loaded horizontally and thus exhibiting only horizontal displacement. In this setting the function $u(t,x)$ represents the horizontal displacement of the bar, while $w(t)$ is the horizontal loading acting in $x=0$; as before, the nondecreasing function $\ell(t)$ denotes the debonding front, and system \eqref{problem1} governs the evolution of $u$.\par
	The addition of the damping term to the wave equation, harmless at a first sight, makes instead the problem much more difficult to treat than the undamped case $\nu=0$ previously analysed in \cite{DMLazNar16}. Indeed, the arguments they adopted do not work anymore because of a real coupling between the two unknowns $u$ and $\ell$ which appears if $\nu$ is positive. The aim of our contribution is thus to develop an original approach which allows us to overcome the technical difficulties related to the damping term and to get and improve the results obtained in \cite{DMLazNar16}.\par
	The paper is organised as follows: in Section~\ref{sec1} we prove there exists a unique solution $u$ to problem \eqref{problem1} when the evolution of the debonding front $\ell$ is known a priori; the idea is to introduce an equivalent problem solved by the function $v(t,x)=e^{\nu t/2}u(t,x)$ (see \eqref{problem2}) and then, exploiting a suitable representation formula (Duhamel's principle), to perform a contraction argument (see Proposition~\ref{contractionCl} and Theorem~\ref{globalsolutionprescr}).\par
	In Section~\ref{sec2} we study the total energy $\mc T$ of the solution $u$ to problem \eqref{problem1}, namely the sum of the internal energy and the energy dissipated by viscosity. We prove that $\mc T$ is an absolutely continuous function and we provide an explicit formula (for small time) for its derivative (see Proposition~\ref{energyderivative}).\par
	In the rest of the paper we take care of problem \eqref{problem1} when the evolution of the debonding front $\ell$ is unknown, but is governed by a suitable energy criterion (Griffith's criterion) based on the notion of dynamic energy release rate (see~\cite{Fre90} for its definition in the general framework of Fracture Mechanics); physically it represents the amount of energy for unit lenght spent to debond the tape.\par
	In the first part of Section~\ref{sec3} we introduce the dynamic energy release rate $G_\alpha(t)$ at time $t$ corresponding to a speed $\alpha\in(0,1)$ of the debonding front (see Definition \ref{galfa}) following the presentation given in~\cite{DMLazNar16}; in the second one we formulate Griffith's criterion (see \eqref{Griffithcrit}) under the assumption that the energy dissipated during the debonding process in the time interval $[0,t]$ is expressed by the formula
	\begin{equation*}
		\int_{\ell_0}^{\ell(t)}\kappa(x)\d x,
	\end{equation*}
	where $\kappa\colon[\ell_0,+\infty)\to(0,+\infty)$ is the local toughness of the glue between the tape and the substrate (for a more general case of speed-dependent toughness in the undamped case $\nu=0$ we refer to~\cite{LazNarkappa}). With this aim, as in~\cite{DMLazNar16} and~\cite{Lar10}, we formulate the evolution in terms of an energy-dissipation balance and of a maximum dissipation principle, deducing that $\ell$ must satisfy the following system, namely Griffith's criterion: 
	\begin{equation}\label{Griff}
	\begin{cases}
	\,\,0\le\dot{\ell}(t)<1,\\ 
	\,\,G_{\dot\ell(t)}(t)\le \kappa(\ell(t)),\\ 
	\left[ G_{\dot\ell(t)}(t)-\kappa(\ell(t))\right]\dot{\ell}(t)=0. 
	\end{cases}
	\end{equation}
	
	In Section~\ref{sec4} we present our main result: we solve the coupled problem showing existence and uniqueness of a pair $(u,\ell)$ satisfying \eqref{problem1}\&\eqref{Griff} (see Theorem~\ref{finalthm}). Our result generalises Theorem~3.5 in~\cite{DMLazNar16} both for the presence of the damping term as well as for the weaker regularity we require on the data. The strategy for the proof is, like in Section~\ref{sec1}, to rewrite \eqref{problem1}\&\eqref{Griff} as a fixed point problem and then to use a contraction argument (see Proposition~\ref{contractioncoupled}). Furthermore, our approach even allows us to consider the presence of an external force $f$ in the model (see Remark~\ref{external}), namely when the equation for the vertical displacement $u$ becomes
	\begin{equation*}
		u_{tt}(t,x)-u_{xx}(t,x)+\nu u_t(t,x)=f(t,x), \quad t > 0 \,,\, 0<x<\ell(t).
	\end{equation*}

	At the end of the work we attach an Appendix, in which we collect some results used through the paper about the Chain rule and the Leibniz differentiation rule under low regularity assumptions.
	
	\section{Prescribed debonding front}\label{sec1}
	In this Section we show existence and uniqueness of solutions to problem \eqref{problem1} when the evolution of the debonding front is prescribed. We first consider an auxiliary and equivalent problem, see \eqref{problem2}, which will be easier to handle than the original one; then we provide a representation formula, given by \eqref{Duhamel}, for a solution of this new problem. The result of existence and uniqueness will be finally obtained by means of a fixed point argument, as stated in Proposition~\ref{contractionCl} and Theorem~\ref{globalsolutionprescr}. We follow the same presentation given in~\cite{DMLazNar16}: we fix $\nu\ge 0$, $\ell_0>0$ and a function $\ell \colon [0,+\infty) \to [\ell_0,+\infty)$ such that
	\begin{subequations}\label{elle}
		\begin{equation}\label{ellea}
			\ell\in C^{0,1}([0,+\infty)) ,
		\end{equation}
		\begin{equation}\label{elleb}
			\ell(0)=\ell_0\mbox{ and } 0\le\dot\ell(t)\le 1 \mbox{ for a.e. }t\in [0,+\infty).
		\end{equation}
	\end{subequations}
	\begin{rmk}[\textbf{Notation}]
		Given any function of one variable $\phi\colon\erre\to\erre$ we always denote its derivative (when it exists) by $\dot \phi$, regardless of whether it is a time or a spatial derivative.
	\end{rmk}\noindent
	Differently from~\cite{DMLazNar16} we allow the debonding front $\ell$ to move even with speed one. For $t\in[0,+\infty)$ we introduce the functions:
	\begin{equation*}
		\varphi(t):= t {-} \ell(t) \,\mbox{ and } \psi(t):=t{+}\ell(t).
	\end{equation*} 
	Since $\psi$ is strictly increasing we can define 
	\begin{equation*}
		 \omega\colon [\ell_0,+\infty) \to [-\ell_0,+\infty) , \quad
		\omega(t):=\varphi\circ\psi^{-1}(t) ,
	\end{equation*}
	and we notice that $\omega$ is a Lipschitz function whose derivative satisfies for a.e. $t\in[\ell_0,+\infty)$ 
	\begin{equation}\label{deromega}
		0\le\dot\omega(t)=\frac{1-\dot\ell(\psi^{-1}(t))}{1+\dot\ell(\psi^{-1}(t))}\le 1.
	\end{equation}
	For $a\in\erre$ and for $k\ge0$ integer we introduce the spaces:
	\begin{gather*}
		\widetilde H^1(a,+\infty) :=\{u \in \Hloc(a,+\infty) \mid u\in H^1(a,b) \, \text{ for every } b>a\},\\
		\widetilde C^{k,1} ([a,+\infty)) := \{ u \in C^k([a,+\infty)) \mid u \in C^{k,1}([a,b]) \text{ for every } b>a \} .
	\end{gather*}
	We assume that
	\begin{subequations}\label{bdryregularity}
	\begin{equation}	
	w \in \widetilde  H^1(0,+\infty),	
	\end{equation}
	\begin{equation}\label{bdry2}
		u_0 \in H^1(0,\ell_0) , \quad u_1 \in L^2(0,\ell_0) .
	\end{equation}
	\end{subequations}	
	\begin{rmk}
		Throughout the paper every function in $W^{1,p}(a,b)$, for $-\infty<a<b<+\infty$ and $p\in[1,+\infty]$, is always identified with its continuous representative on $[a,b]$.
	\end{rmk}\noindent	
	For the initial data we require the compatibility conditions		
	\begin{equation}
			\label{compatibility0}
			u_0(0)=w(0), \quad u_0(\ell_0)=0.
	\end{equation}	
	We set:
	\begin{gather*}
		\Omega := \{ (t,x)\mid t>0\,,\, 0 < x < \ell(t)\},\\
		\Omega_T :=\{ (t,x)\in\Omega\mid t<T\}.
	\end{gather*}
	We will look for solutions in the space
	\begin{equation*}
		\widetilde{H}{^1}(\Omega) := \{u \in \Hloc(\Omega)\mid u \in H^1(\Omega_T) \mbox{ for every } T>0\},
	\end{equation*}
	or, assuming more regular data, in the space
	\begin{equation*}
		\widetilde{C}^{k,1}(\overline{\Omega}) := \{u \in C^k(\overline\Omega)\mid u \in C^{k,1}(\overline{\Omega_T}) \mbox{ for every } T>0\}.
	\end{equation*}
	\begin{defi}
	\label{sol}
	We say that a function $u \in \widetilde{H}{^1}(\Omega)$ (resp.\ in $H^1(\Omega_T)$) is a solution of \eqref{problem1} if $u_{tt}-u_{xx}+\nu u_{t}=0$  holds in the sense of distributions in $\Omega$ (resp.\ in $\Omega_T$), the boundary conditions are intended in the sense of traces and the initial conditions $u_0$ and $u_1$ are satisfied in the sense of $L^2(0,\ell_0)$ and $H^{-1}(0,\ell_0)$, respectively.
	\end{defi}
	\begin{rmk}
		The definition is well posed, since for a solution $u\in H^1(\Omega_T)$ we have that $u_t$ and $u_x$  belong to $L^2(0,T;L^2(0,\ell_0))$; this implies that $u_t$ and $u_{xx}$ are in $L^2(0,T;H^{-1}(0,\ell_0))$ and so by the wave equation $u_{tt}\in L^2(0,T;H^{-1}(0,\ell_0))$. Therefore $u_t\in H^1(0,T;H^{-1}(0,\ell_0))\subseteq C^0([0,T];H^{-1}(0,\ell_0))$ (see also~\cite{DMLazNar16}).
	\end{rmk}\noindent
One of the standard ways used to deal with the weakly damped wave equation consists in the introduction of the function $v(t,x):=e^{\nu t/2}u(t,x)$ (see for instance \cite{DauLio}, Remark~10, pag.~141), which in our setting solves the auxiliary problem
\begin{equation}
\label{problem2}
\begin{cases}
v_{tt}(t,x)-v_{xx}(t,x)-\displaystyle\frac{\nu^2}{4}v(t,x)=0, \quad& t > 0 \,,\, 0<x<\ell(t),  \\
v(t,0)=z(t), &t>0, \\
v(t,\ell(t))=0,& t>0,\\
v(0,x)=v_0(x),\quad&0<x<\ell_0,\\
v_t(0,x)=v_1(x),&0<x<\ell_0,
\end{cases}
\end{equation}	
where the boundary condition and the initial data are replaced respectively by the functions
\begin{equation}\label{datav}
	\begin{gathered}
	z(t)=e^{\nu t/2}w(t),\\
	v_0(x)=u_0(x)\quad\text{and}\quad v_1(x)=u_1(x)+\frac{\nu}{2}u_0(x).
	\end{gathered}
\end{equation}	
We notice that $z$, $v_0$ and $v_1$ in \eqref{datav} satisfy \eqref{bdryregularity} and the compatibility conditions \eqref{compatibility0} if and only if $w$, $u_0$ and $u_1$ do the same.
	\begin{rmk}\label{equivalent}
	It is easy to see that $u \in \widetilde{H}{^1}(\Omega)$ (resp. $H^1(\Omega_T)$) is a solution of \eqref{problem1} if and only if the corresponding function  $v(t,x)=e^{\nu t/2}u(t,x)\in \widetilde{H}{^1}(\Omega)$ (resp. $H^1(\Omega_T)$) is a solution of~\eqref{problem2}, according to Definition \ref{sol} (with the obvious changes). The absence of first derivatives in the equation for $v$ makes this second problem more convenient to deal with.	
	\end{rmk}\noindent
	We introduce also the sets (see Figure~\ref{Figrettangoli}):
	\begin{align*}	
	&\Omega'_1 \,:=\{ (t,x)\in\Omega\mid t\le x\mbox{ and }t+x\le \ell_0\},\\		
	&\Omega'_2 \,:=\{ (t,x)\in\Omega\mid t>x\mbox{ and }t+x<\ell_0\},\\	
	&\Omega'_3 \,:=\{ (t,x)\in\Omega\mid t<x\mbox{ and }t+x>\ell_0\},\\
	&\Omega'\,\, :=	\,\,\Omega'_1\cup\Omega'_2\cup\Omega'_3	,\\
	&\Omega'_T:=\{ (t,x)\in\Omega'\mid t<T\},
	\end{align*}
	and we consider the spaces:
	\begin{align*}
		&\widetilde{H}{^1}(\Omega') := \{u \in \Hloc(\Omega')\mid u \in H^1(\Omega'_T) \mbox{ for every } T>0\},\\
		&\widetilde{L}{^2}(\Omega') := \{u \in L^2_{\textnormal{loc}}(\Omega')\mid u \in L^2(\Omega'_T) \mbox{ for every } T>0\}.
	\end{align*}	
	In~\cite{DMLazNar16} it has been shown that every solution to the undamped (i.e. $\nu=0$) wave equation, here and henceforth denoted by $A(t,x)$, satisfies a suitable version of the classical d'Alembert's formula, adapted to the time dependence of the domain; imposing initial data and boundary conditions the authors prove that in $\Omega'$ it can be written as $A(t,x)=a_1(t{+}x)+a_2(t{-}x)$, where
	\begin{equation}\label{a1a2}
	\begin{aligned}
	&a_1(s)=\begin{cases}
	\displaystyle\frac 12 v_0(s)+\frac 12 \int_{0}^{s}v_1(r)\d r, &\mbox{ if }s\in(0,\ell_0],\\
	\displaystyle-\frac 12 v_0({-}\omega (s))+\frac 12 \int_{0}^{{-}\omega (s)}v_1(r)\d r, &\mbox{ if }s\in(\ell_0,2t^*),
	\end{cases}\\
	&a_2(s)=\begin{cases}
	\displaystyle\frac 12 v_0(-s)-\frac 12 \int_{0}^{-s}v_1(r)\d r, &\mbox{ if }s\in(-\ell_0,0],\\
	\displaystyle z(s)-\frac 12 v_0(s)-\frac 12 \int_{0}^{s}v_1(r)\d r, &\mbox{ if }s\in(0,\ell_0),
	\end{cases}
	\end{aligned}		
	\end{equation}
	with $t^*=\inf\{t\in[\ell_0,+\infty)\mid t=\ell(t)\}$ (with the convention $\inf\{\emptyset\}=+\infty$). We notice that by~\eqref{bdryregularity}, \eqref{compatibility0} and Remark~\ref{sobolev}, $a_1$ and $a_2$ belong to $\widetilde{H}^1(0,2t^*)$ and $H^1(-\ell_0,\ell_0)$ respectively; this will be used in Lemma~\ref{A}.
	\begin{rmk}
		We wrote $\widetilde{H}^1(0,2t^*)$ since $t^*$ can be $+\infty$; if this does not occur, that expression simply stands for $H^1(0,2t^*)$.
	\end{rmk}\noindent
	Hence d'Alembert's formula provides an explicit expression of $A$ in $\Omega'$:
	\begin{equation}\label{homsol}
	A(t,x)=\begin{cases}\displaystyle
	\frac 12 v_0(x{-}t)+\frac 12 v_0(x{+}t)+\frac 12 \int_{x{-}t}^{x{+}t}v_1(s) \d s , & \mbox{if } (t,x)\in\Omega_1',\\
	\displaystyle z(t{-}x)-\frac 12 v_0(t{-}x)+\frac 12 v_0(t{+}x)+\frac 12 \int_{t{-}x}^{t{+}x}v_1(s) \d s, & \mbox{if } (t,x)\in\Omega_2',\\
	\displaystyle\frac 12 v_0(x{-}t)-\frac 12 v_0({-}\omega(x{+}t))+\frac 12 \int_{x-t}^{{-}\omega(x{+}t)}v_1(s) \d s, & \mbox{if } (t,x)\in\Omega_3'.
	\end{cases}
	\end{equation}
	\begin{rmk}
		In $\Omega\setminus\Omega'$ one cannot anymore obtain explicit formulas for $a_1$, $a_2$, and hence for $A$, due to superpositions of forward and backward waves generated by ``bouncing'' against the endpoints $x=0$ and $x=\ell(t)$, even though d'Alembert's formula still holds true.
	\end{rmk}\noindent
	Inspired by the validity of this version of d'Alembert's formula in the undamped and homogeneous case $\nu=0$, to solve problem \eqref{problem2} we firstly prove that even the nonhomogeneous classical counterpart, the so called Duhamel's principle, holds true in our time-dependent domain setting. Duhamel's principle states that every solution to problem \eqref{problem2} can be written (in $\Omega'$) as a sum of two terms: the first one is the solution $A$ of the undamped wave equation, while the second one is the integral of the forcing term $\frac{\nu^2}{4}v(t,x)$ over a suitable space-time domain, denoted by $R(t,x)$. The domain of integration has the following form (see Figure~\ref{Figrettangoli}):
	\begin{equation}\label{rettangoli}
		R(t,x)= \{(\tau,\sigma) \in \Omega' \mid 0 < \tau < t,\,\,\, \gamma_1(\tau;t,x) < \sigma < \gamma_2(\tau;t,x) \},
	\end{equation}
	where
		\begin{equation}\label{bordi}
		\begin{aligned}		
		&\gamma_1(\tau;t,x) = \begin{cases}
		x{-}t{+}\tau, & \mbox{if } (t,x)\in\Omega_1', \\
		|x{-}t{+}\tau|, & \mbox{if } (t,x)\in\Omega_2',\\
		x{-}t{+}\tau, & \mbox{if } (t,x)\in\Omega_3',
		\end{cases}\,\\
		&\gamma_2(\tau;t,x) = \begin{cases}
		x{+}t{-}\tau, & \mbox{if } (t,x)\in\Omega_1',\\
		x{+}t{-}\tau, &\mbox{if } (t,x)\in\Omega_2',\\
		\tau{-}\omega(t{+}x), & \mbox{if } (t,x)\in\Omega_3' \mbox{ and } \tau \le \psi^{-1}(t{+}x),\\
		x{+}t{-}\tau, &  \mbox{if } (t,x)\in\Omega_3' \mbox{ and } \tau > \psi^{-1}(t{+}x),
		\end{cases}
		\end{aligned}
		\end{equation}		
	are the left and the right boundary of $R(t,x)$, respectively.\par
	\begin{figure}
			\subfloat{\includegraphics[scale=.4]{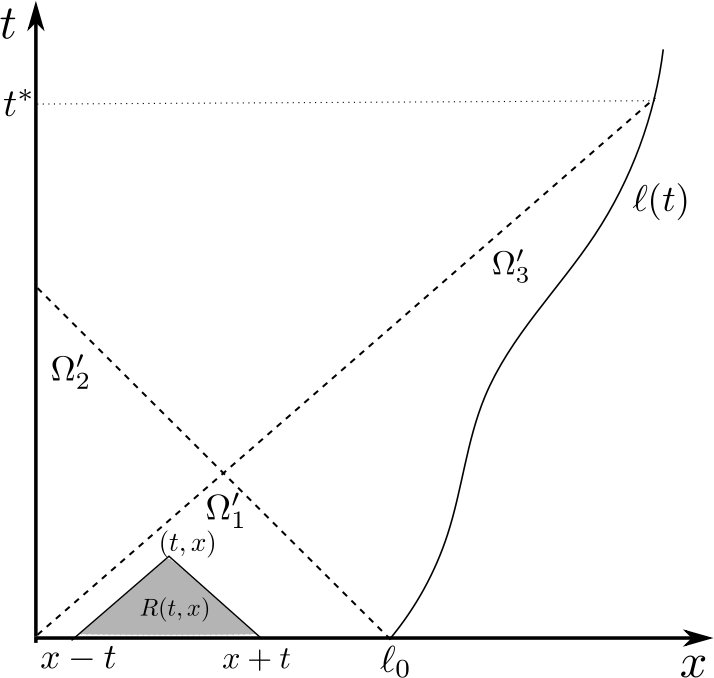}}\quad
			\subfloat{\includegraphics[scale=.4]{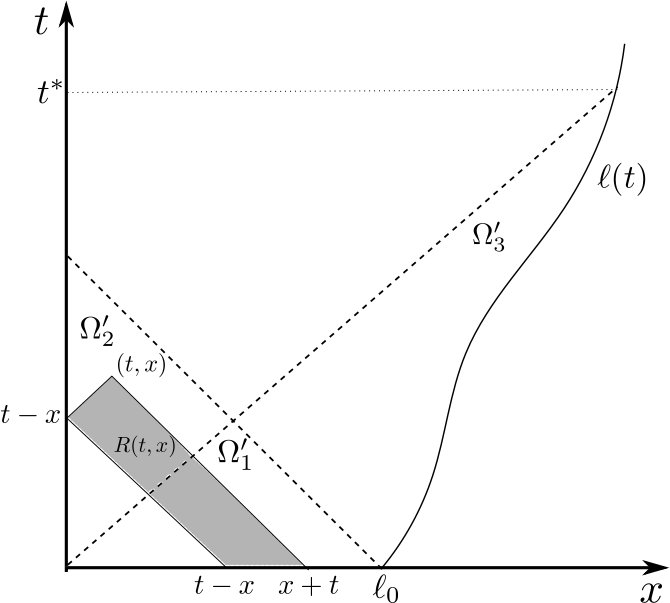}}\quad
			\subfloat{\includegraphics[scale=.4]{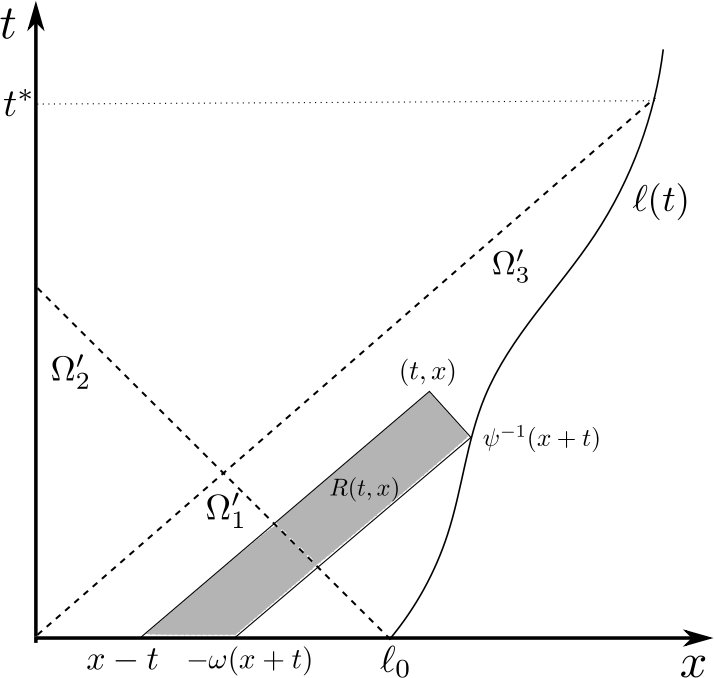}}
			\caption{The set $R(t,x)$ in the three possible cases $(t,x)\in\Omega'_1$, $(t,x)\in\Omega'_2$ and $(t,x)\in\Omega'_3$}.\label{Figrettangoli}
	\end{figure}
	The precise statement is the following:	
	\begin{prop}\label{representation}
		A function $v\in \widetilde{H}^1(\Omega')$ is a solution of \eqref{problem2} in $\Omega'$ if and only if
		\begin{equation}\label{Duhamel}
			v(t,x)=A(t,x)+\frac{\nu^2}{8}\iint_{R(t,x)}v(\tau,\sigma)\d\sigma \d\tau, \quad \mbox{for a.e. }(t,x)\in\Omega',
		\end{equation}
	where $A$ is as in \eqref{homsol} and $R$ is as in \eqref{rettangoli}.
	\end{prop}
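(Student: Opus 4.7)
The plan is to view \eqref{Duhamel} as the Duhamel representation for the wave equation $v_{\tau\tau}-v_{\sigma\sigma}=f$ with source $f=\frac{\nu^2}{4}v$, and to derive both implications from Green's formula on $R(t,x)$. The key geometric observation is that, in each of $\Omega'_1,\Omega'_2,\Omega'_3$, the boundary $\partial R(t,x)$ is a polygon whose sides are either characteristic segments $\tau\pm\sigma=\mbox{const}$ or a piece of $\{\tau=0\}$, touching the lateral boundaries $\{\sigma=0\}$ or $\{\sigma=\ell(\tau)\}$ only at isolated corner points (at $(t{-}x,0)$ in the $\Omega'_2$ case, at $(\psi^{-1}(t{+}x),\ell(\psi^{-1}(t{+}x)))$ in the $\Omega'_3$ case).

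For the ``only if'' direction I would fix $(t,x)\in\Omega'$ and apply
\[
\iint_{R(t,x)}(v_{\tau\tau}-v_{\sigma\sigma})\d\sigma\d\tau\;=\;\oint_{\partial R(t,x)}(-v_\tau\d\sigma-v_\sigma\d\tau);
\]
on each characteristic side the identity $\d\sigma=\pm\d\tau$ makes $-v_\tau\d\sigma-v_\sigma\d\tau$ equal to $\mp\d v$, so that side contributes only the endpoint values of $v$, whereas the bottom segment contributes $-\int v_1\d\sigma$. Substituting the traces dictated by the initial and boundary conditions — $v_0,v_1$ on $\tau=0$, $v=z$ at the contact with $\sigma=0$ in the $\Omega'_2$ case, and $v=0$ at the contact with $\sigma=\ell(\tau)$ in the $\Omega'_3$ case — turns the boundary integral into $2v(t,x)-2A(t,x)$ in all three cases, and invoking $v_{\tau\tau}-v_{\sigma\sigma}=\frac{\nu^2}{4}v$ yields \eqref{Duhamel}.

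For the converse I would first observe that at $t=0$ and on $\{x=0\}\cup\{x=\ell(t)\}$ the set $R(t,x)$ degenerates ($\gamma_1\equiv\gamma_2$), so the double integral vanishes and the initial/boundary traces of $v$ reduce to the corresponding traces of $A$, which are $v_0,v_1,z,0$ by construction (in particular $A(t,\ell(t))=0$ since $-\omega(t{+}\ell(t))=\ell(t){-}t$). To recover the PDE I would differentiate twice the double integral $I(t,x):=\iint_{R(t,x)}v\d\sigma\d\tau$ via the Leibniz rule from the Appendix; because $\partial R(t,x)$ is characteristic, the moving-boundary contributions collapse and give $I_{tt}-I_{xx}=2v(t,x)$, whence together with $A_{tt}-A_{xx}=0$ the equation $v_{tt}-v_{xx}-\frac{\nu^2}{4}v=0$ follows distributionally in $\Omega'$.

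The main difficulty will be to justify both manipulations under the weak regularity $v\in\widetilde H^1(\Omega')$: boundary integrals along characteristics have to be interpreted via one-dimensional traces, and the pointwise Leibniz/Green calculations strictly speaking require smoothness. I expect to bypass this by mollifying $v$ on slightly shrunk subregions of $\Omega'$, running the argument for the smooth approximants, and passing to the limit using continuity of traces in $H^1$ and $L^2$-convergence of the double integral; on the converse side the equation is then recovered after testing against smooth functions compactly supported in $\Omega'$.
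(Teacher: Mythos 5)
Your proposal is correct and is essentially the paper's own argument: the paper passes to characteristic coordinates $\xi=t-x$, $\eta=t+x$ and integrates $V_{\xi\eta}=\tfrac{\nu^2}{4}V$ over the (rectangular) image of $R(t,x)$, which is precisely your Green's-formula computation along the characteristic sides of $\partial R(t,x)$, with the data entering at the corners exactly as you describe. For the converse the paper likewise invokes the differentiation result for $H(t,x)=\iint_{R(t,x)}F$ (Lemma~\ref{derivatives}) together with $A_{tt}=A_{xx}$, so no further comparison is needed.
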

\begin{proof}
	Let $v\in \widetilde{H}^1(\Omega')$ be a solution of \eqref{problem2} in $\Omega'$ and consider the change of variables
	\begin{equation}\label{changeofvariable}
			\begin{cases}
		\xi=t-x,\\
		\eta=t+x.
		\end{cases}
	\end{equation}
	Then the function $V(\xi,\eta):=v(\frac{\xi+\eta}{2},\frac{\eta-\xi}{2})$ satisfies (in the sense of distributions)
	\begin{equation}\label{eqchangevar}
		V_{\xi\eta}=\frac{\nu^2}{4} V \quad\quad\mbox{in  } \Lambda',
	\end{equation}
	where $\Lambda'$ is the image of $\Omega'$ through \eqref{changeofvariable}.\par
	Integrating \eqref{eqchangevar} over the image of $R(t,x)$ through \eqref{changeofvariable} and reverting to the original variables $(t,x)$ one gets representation formula \eqref{Duhamel} (imposing initial data and boundary conditions).\par
	Now assume that $v\in \widetilde{H}^1(\Omega')$ satisfies \eqref{Duhamel}; then using Lemma~\ref{derivatives} and recalling that $A_{tt}=A_{xx}$ (weakly) we can conclude.
\end{proof}
\begin{rmk}
	An analogous statement holds true for a solution $u$ of \eqref{problem1}, replacing \eqref{Duhamel} by
	\begin{equation}\label{Duhamelu}
	u(t,x)=\widehat{A}(t,x)-\frac{\nu}{2}\iint_{R(t,x)}u_t(\tau,\sigma)\d\sigma \d\tau, \quad \mbox{for a.e. }(t,x)\in\Omega',
	\end{equation}
	where $\widehat{A}$ is obtained replacing $v_0$, $v_1$ and $z$ by $u_0$, $u_1$ and $w$ in \eqref{homsol}.
\end{rmk}\noindent
	For a better understanding of the function $A$ and of the integral term we state the following two Lemmas.
	\begin{lemma}\label{A}
		 Fix $\ell_0>0$ and consider $v_0$, $v_1$ and $z$ satisfying \eqref{bdryregularity} and \eqref{compatibility0}. Assume that  $\ell\colon[0,+\infty)\to[\ell_0,+\infty)$ satisfies \eqref{elle}.\par 
		 Then the function $A$ defined in \eqref{homsol} is continuous on $\overline{\Omega'}$ and it belongs to $\widetilde{H}^1(\Omega')$; moreover, setting $A\equiv0$ outside $\overline{\Omega}$, for every $t\in\left[0,\frac{\ell_0}{2}\right]$ it holds true:
		 \begin{subequations}
		 	\begin{equation}\label{At}		 	
		 	\frac{A(t+h,\cdot)-A(t,\cdot)}{h}\xrightarrow[h\to 0]{}A_t(t,\cdot),\quad\text{a.e. in }(0,+\infty) \text{ and in }L^2(0,+\infty),
		 	\end{equation}
		 	\begin{equation}\label{Ax}
		 	\frac{A(t,\cdot+h)-A(t,\cdot)}{h}\xrightarrow[h\to 0]{}A_x(t,\cdot),\quad\text{a.e. in }(0,+\infty) \text{ and in }L^2(0,+\infty),
		 	\end{equation}
		\end{subequations}
		 where for every $t\in\left[0,\frac{\ell_0}{2}\right]$ and for a.e. $x\in(0,+\infty)$
		 \begin{equation*}
		 \begin{aligned}
		 &	A_t(t,x)=\begin{cases}
		 \dot{a}_1(t{+}x)+\dot{a}_2(t{-}x),&\text{if }x\in(0,\ell(t)),\\
		 0,&\text{if }x\in(\ell(t),+\infty),
		 \end{cases}\\
		&A_x(t,x)=\begin{cases}
		\dot{a}_1(t{+}x)-\dot{a}_2(t{-}x),&\text{if }x\in(0,\ell(t)),\\
		0,&\text{if }x\in(\ell(t),+\infty),
		\end{cases}		
		 \end{aligned}
	 \end{equation*}
		 being $a_1$ and $a_2$ as in \eqref{a1a2}.\par
		 Furthermore $A_t$ and $A_x$ belong to $C^0([0,\frac{\ell_0}{2}];L^2(0,+\infty))$ and hence in particular $A$ belongs to $C^0([0,\frac{\ell_0}{2}];H^1(0,+\infty))\cap C^1([0,\frac{\ell_0}{2}];L^2(0,+\infty))$.
	\end{lemma}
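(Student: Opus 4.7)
The plan is to reduce everything to the decomposition $A(t,x)=a_1(t+x)+a_2(t-x)$ coming from \eqref{a1a2} and then exploit the one-dimensional regularity of $a_1$ and $a_2$. First I would verify that $a_1\in\widetilde H^1(0,2t^*)$ and $a_2\in H^1(-\ell_0,\ell_0)$: the only nontrivial point is the matching at the gluing points of the two-piece definitions, and this is precisely where the compatibility conditions \eqref{compatibility0} enter. Indeed $a_2$ is continuous at $s=0$ iff $v_0(0)=z(0)$, while $a_1$ is continuous at $s=\ell_0$ iff $v_0(\ell_0)=0$, using $\omega(\ell_0)=-\ell_0$. Continuity of $A$ on $\overline{\Omega'}$ is then immediate from the 1D Sobolev embedding applied to $a_1,a_2$, and a direct check based on $\omega(\psi(t))=\varphi(t)$ shows that $a_1(\psi(t))+a_2(\varphi(t))=0$, so that $A$ vanishes on the moving boundary $x=\ell(t)$, which is consistent with extending $A$ by $0$ outside $\overline\Omega$. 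For $A\in\widetilde H^1(\Omega')$ I would change variables via $(\xi,\eta)=(t+x,t-x)$, which maps $\Omega'_T$ into a bounded region and has constant Jacobian, so by Fubini the $L^2$-norms of $\dot a_1(t+x)$ and $\dot a_2(t-x)$ on $\Omega'_T$ are finite; the weak identities $A_t=\dot a_1(t+\cdot)+\dot a_2(t-\cdot)$ and $A_x=\dot a_1(t+\cdot)-\dot a_2(t-\cdot)$ follow by standard mollification on each piece $\Omega'_i$.

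Next I would address the difference-quotient statements \eqref{At}-\eqref{Ax} for $t\in[0,\ell_0/2]$, which is exactly the range in which the slice $\{t\}\times(0,\ell(t))$ is entirely contained in $\overline{\Omega'}$ (the three pieces $\Omega'_i$ glue together to cover it). The a.e. convergence is elementary: at any $x\neq\ell(t)$, the continuity of $\ell$ implies that $(t+h,x)$ stays on the same side of the moving boundary for small $h$, so the difference quotient is either $[a_1(t+h+x)-a_1(t+x)]/h+[a_2(t+h-x)-a_2(t-x)]/h$ (if $x<\ell(t)$) or identically $0$ (if $x>\ell(t)$), and the Lebesgue differentiation theorem applied to $a_1,a_2$ gives the claimed limits. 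The analogous argument works for the $x$-difference quotient.

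The main obstacle is the $L^2(0,+\infty)$ convergence, because the region $x\in(\ell(t),\ell(t+h))$ where $A(t+h,\cdot)\neq 0=A(t,\cdot)$ is precisely where the moving boundary interferes. On $(0,\ell(t))$ I would invoke the classical fact that difference quotients of $H^1$ functions of one variable converge in $L^2$, applied to $a_1$ and $a_2$ combined with $L^2$-continuity of translations. On $(\ell(t+h),+\infty)$ the difference quotient is identically $0$. On the critical strip $(\ell(t),\ell(t+h))$, whose length is $O(h)$ since $\ell$ is Lipschitz, I would use the crucial fact $A(t+h,\ell(t+h))=0$ together with $A(t+h,\cdot)\in H^1$ to write
\begin{equation*}
|A(t+h,x)|^2\le\bigl(\ell(t+h)-\ell(t)\bigr)\int_{\ell(t)}^{\ell(t+h)}|A_x(t+h,s)|^2\,\d s,
\end{equation*}
so that the contribution of this strip to $\|[A(t+h,\cdot)-A(t,\cdot)]/h\|^2_{L^2}$ is bounded by a Lipschitz-constant factor times the integral of $|A_x(t+h,\cdot)|^2$ over a set of vanishing measure, which tends to $0$ as $h\to 0$. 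The same scheme handles the spatial difference quotient.

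Finally, the continuity $t\mapsto A_t(t,\cdot),A_x(t,\cdot)\in L^2(0,+\infty)$ on $[0,\ell_0/2]$ is obtained by writing $A_t(t,\cdot)=[\dot a_1(t+\cdot)+\dot a_2(t-\cdot)]\chi_{(0,\ell(t))}$ and combining $L^2$-continuity of translations for the $\dot a_i$'s (which are in $L^2$) with the pointwise (hence $L^2$-dominated) convergence of $\chi_{(0,\ell(t_n))}$ to $\chi_{(0,\ell(t))}$ coming from continuity of $\ell$. The consequence $A\in C^0([0,\ell_0/2];H^1(0,+\infty))\cap C^1([0,\ell_0/2];L^2(0,+\infty))$ then follows automatically since $A$ is continuous across $x=\ell(t)$ and vanishes to the right.
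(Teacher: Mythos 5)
Your argument is correct and follows essentially the same route as the paper: decompose $A$ as $a_1(t{+}x)+a_2(t{-}x)$ with $a_1,a_2$ Sobolev, get a.e. convergence from Lebesgue differentiation, handle $L^2$ convergence on $(0,\ell(t))$ by continuity of translations, and control the critical strip $(\ell(t),\ell(t{+}h))$ via $A(t{+}h,\ell(t{+}h))=0$, Cauchy--Schwarz and the Lipschitz bound on $\ell$, exactly as in the paper's estimate. The only cosmetic difference is that the paper writes the strip estimate directly in terms of $\dot a_1,\dot a_2$ translated back to fixed $L^2$ functions before invoking dominated convergence, a step you leave implicit but which is immediate from your formula for $A_x(t{+}h,\cdot)$.
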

\begin{proof}
	By the following explicit expression of $A$,
	\begin{equation*}
		A(t,x)=\begin{cases}
		a_1(t{+}x)+a_2(t{-}x),&\text{for every }(t,x)\in\overline{\Omega'},\\
		0,&\text{for every }(t,x)\in[0,+\infty)^2\setminus\overline{\Omega},
		\end{cases}
	\end{equation*}
	and recalling that $a_1$ and $a_2$ belong to $\widetilde{H}^1(0,2t^*)$ and $H^1(-\ell_0,\ell_0)$ respectively, we deduce that $A\in\widetilde{H}^1(\Omega')\cap C^0(\overline{\Omega'})$.\par 
	By classical results on Sobolev functions and exploiting the fact that $A(t,\ell(t))=0$ for every $t\in[0,t^*]$ it is easy to see that for every $t\in\left[0,\frac{\ell_0}{2}\right]$ \eqref{Ax} holds. Similarly one can show that for every $t\in\left[0,\frac{\ell_0}{2}\right]$ the difference quotient in \eqref{At} converges to $A_t(t,x)$ for a.e. $x\in(0,+\infty)$; to prove that it converges even in the sense of $L^2(0,+\infty)$ we compute (we assume $h>0$, being the other case analogous):
	\begingroup
	\allowdisplaybreaks
	\begin{align*}
		&\quad\int_{0}^{+\infty}\left|\frac{A(t{+}h,x)-A(t,x)}{h}-A_t(t,x)\right|^2\d x\\
		&=\int_{0}^{\ell(t)}\left|\frac{a_1(t{+}h{+}x){-}a_1(t{+}x)}{h}-\dot{a}_1(t{+}x)+\frac{a_2(t{+}h{-}x){-}a_2(t{-}x)}{h}-\dot{a}_2(t{-}x)\right|^2\d x\\
		&\quad+\frac{1}{h^2}\int_{\ell(t)}^{\ell(t+h)}|A(t{+}h,x)|^2\d x.
	\end{align*}
	\endgroup
	The first integral tends to zero as $h\to0^+$ since $a_1$ and $a_2$ are Sobolev functions, while for the second one we argue as follows:
	\begingroup
	\allowdisplaybreaks
	\begin{align*}
		\frac{1}{h^2}\int_{\ell(t)}^{\ell(t+h)}\!\!\!\!\!\!|A(t{+}h,x)|^2\d x&=\frac{1}{h^2}\int_{\ell(t)}^{\ell(t+h)}\left|\int_{\ell(t+h)}^{x}\big(\dot{a}_1(t{+}h{+}s)-\dot{a}_2(t{+}h{-}s)\big)\d s\right|^2\d x\\
		&\le\frac{1}{h^2}\int_{\ell(t)}^{\ell(t+h)}\!\!\!\!\!\!\!(\ell(t{+}h)-\ell(t))\int_{\ell(t)}^{\ell(t+h)}\!\!\!\!\!\!\!\big|\dot{a}_1(t{+}h{+}s)-\dot{a}_2(t{+}h{-}s)\big|^2\d s\d x\\
		&=\left(\frac{\ell(t{+}h)-\ell(t)}{h}\right)^2\int_{\ell(t)}^{\ell(t+h)}\!\!\!\!\!\!\!\big|\dot{a}_1(t{+}h{+}s)-\dot{a}_2(t{+}h{-}s)\big|^2\d s\\
		&\le 2\int_{\ell(t)+t+h}^{\ell(t+h)+t+h}\!\!\!\!\!\!\!|\dot{a}_1(y)|^2\d y+2\int_{t+h-\ell(t+h)}^{t+h-\ell(t)}\!\!\!\!\!\!\!|\dot{a}_2(y)|^2\d y,
	\end{align*}
	\endgroup
	and by dominated convergence we deduce it goes to zero as $h\to0^+$ too, so \eqref{At} is proved.\par 
	The fact that $A_t$ and $A_x$ are continuous in $L^2(0,+\infty)$ follows from the continuity of translations in $L^2(0,+\infty)$, arguing as before.
\end{proof}
Next Lemma instead is related to the integral term appearing in \eqref{Duhamel}:
	\begin{lemma}\label{derivatives}
	Fix $\ell_0>0$ and assume that $\ell\colon[0,+\infty)\to[\ell_0,+\infty)$ satisfies \eqref{elle}. Let $F \in \widetilde L^2(\Omega')$ and for  every  $(t,x) \in \Omega'$ let 
	\begin{equation}\label{acca}
		H(t,x) =\iint_{R(t,x)}F(\tau,\sigma)\d\sigma \d\tau= \int_0^t \int_{\gamma_1(\tau;t,x)}^{\gamma_2(\tau;t,x)} F(\tau,\sigma) \d \sigma \d \tau.
	\end{equation} 
	
	Then $H$ is continuous on $\overline{\Omega'}$ and it belongs to $\widetilde{H}^1(\Omega')$; moreover, setting $H\equiv0$ outside $\overline{\Omega}$, for every $t\in\left[0,\frac{\ell_0}{2}\right]$ it holds true:
	\begin{subequations}
	\begin{equation}\label{Ht}		\frac{H(t+h,\cdot)-H(t,\cdot)}{h}\xrightarrow[h\to 0]{}H_t(t,\cdot),\quad\text{a.e. in }(0,+\infty) \text{ and in }L^2(0,+\infty),
	\end{equation}
	\begin{equation}\label{Hx}
	\frac{H(t,\cdot+h)-H(t,\cdot)}{h}\xrightarrow[h\to 0]{}H_x(t,\cdot),\quad\text{a.e. in }(0,+\infty) \text{ and in }L^2(0,+\infty),
	\end{equation}
	\end{subequations}
	where for every $t\in\left[0,\frac{\ell_0}{2}\right]$ and for a.e. $x\in(0,+\infty)$
	\begin{equation*}
	\begin{aligned}
	&H_t(t,x)=\begin{cases}
	\displaystyle\!\int_0^t \!\![F(\tau, \gamma_2(\tau;t,x))(\gamma_2)_t(\tau;t,x){-} F(\tau, \gamma_1(\tau;t,x))(\gamma_1)_t(\tau;t,x)]\d \tau,&\!\text{if }x\in(0,\ell(t)),\\
	0,&\!\text{if }x\in(\ell(t),+\infty).
	\end{cases}\\
	&H_x(t,x)=\begin{cases}
	\displaystyle\!\int_0^t \!\![F(\tau, \gamma_2(\tau;t,x))(\gamma_2)_x(\tau;t,x){-} F(\tau, \gamma_1(\tau;t,x))(\gamma_1)_x(\tau;t,x)]\d \tau,&\!\!\!\text{if }x\in(0,\ell(t)),\\
	0,&\!\!\!\text{if }x\in(\ell(t),+\infty).
	\end{cases}
	\end{aligned}
	\end{equation*}
	
	Furthermore $H_t$ and $H_x$ belong to $C^0([0,\frac{\ell_0}{2}];L^2(0,+\infty))$ and hence in particular $H$ belongs to $C^0([0,\frac{\ell_0}{2}];H^1(0,+\infty))\cap C^1([0,\frac{\ell_0}{2}];L^2(0,+\infty))$. 
\end{lemma}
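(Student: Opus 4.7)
The plan is to prove this lemma by closely paralleling the proof of Lemma~\ref{A}, with the role of d'Alembert's expression played here by a Leibniz-type differentiation rule for an integral whose lateral boundaries $\gamma_1,\gamma_2$ are Lipschitz functions of $(t,x)$ and whose integrand $F$ is only $L^2$. The key structural fact, common to all three subregions $\Omega_i'$, is that $\gamma_1(t;t,x)=\gamma_2(t;t,x)=x$, so the ``top'' boundary term at $\tau=t$ in the classical Leibniz rule cancels identically; only the contributions coming from $(\gamma_i)_t,(\gamma_i)_x$ survive and they are bounded thanks to \eqref{elle} (and to \eqref{deromega} in $\Omega_3'$).

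First I would establish that $H\in C^0(\overline{\Omega'})\cap\widetilde H^1(\Omega')$. For $(t,x),(t',x')\in\overline{\Omega'_T}$, the Cauchy--Schwarz inequality gives
\begin{equation*}
|H(t,x)-H(t',x')|\le\|F\|_{L^2(\Omega'_T)}\,\bigl|R(t,x)\triangle R(t',x')\bigr|^{1/2},
\end{equation*}
and the measure of the symmetric difference vanishes as $(t',x')\to(t,x)$ by the uniform Lipschitz dependence of $\gamma_1,\gamma_2$ on $(t,x)$. For the Sobolev statement on $\Omega'_T$ I would approximate $F$ by $F_n\in C^\infty_c(\Omega')$ with $F_n\to F$ in $L^2(\Omega'_T)$, apply the classical Leibniz rule to $H_n$, and verify an $L^2(\Omega'_T)$-estimate of the form
\begin{equation*}
\int_{\Omega'_T}\Bigl|\!\int_0^t\![F_n{-}F](\tau,x{+}t{-}\tau)\d\tau\Bigr|^2\!\d x\d t\le C(T)\|F_n{-}F\|_{L^2(\Omega'_T)}^2,
\end{equation*}
obtained via Cauchy--Schwarz and the change of variable $y=x+t-\tau$. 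This yields strong convergence of the distributional derivatives of $H_n$ to the stated formulas, and hence identifies $H_t,H_x$ as elements of $\widetilde L^2(\Omega')$.

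Next I would verify the pointwise a.e.\ and $L^2$ convergence of the difference quotients claimed in \eqref{Ht}, \eqref{Hx} on the time-slices $t\in[0,\ell_0/2]$. The pointwise convergence is a direct application of Lebesgue's differentiation theorem to the integral representation of $H$ combined with the explicit expressions \eqref{bordi} of the boundaries. For the $L^2$ convergence I would use the same two-piece splitting employed in Lemma~\ref{A}: the ``interior'' contribution on $(0,\ell(t))$ converges by continuity of translation in $L^2$ (again through the change-of-variable estimate above, applied now to the difference $F(\tau,x{+}t{+}h{-}\tau)-F(\tau,x{+}t{-}\tau)$), while the ``boundary strip'' $(\ell(t),\ell(t{+}h))$ is controlled by the Lipschitz quotient $(\ell(t{+}h)-\ell(t))/h\le 1$ together with dominated convergence, exactly as in the estimate carried out for $A$. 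Continuity of $H_t,H_x$ from $[0,\ell_0/2]$ into $L^2(0,+\infty)$ is then obtained by the same translation-continuity argument, which also yields $H\in C^0([0,\ell_0/2];H^1(0,+\infty))\cap C^1([0,\ell_0/2];L^2(0,+\infty))$.

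The main obstacle is Step~2 above: the formal Leibniz formula involves traces of $F$ along the characteristic curves $\tau\mapsto(\tau,\gamma_i(\tau;t,x))$, which are meaningless pointwise for $F\in L^2$. The way out is that the formula has to be read as an identity in $L^2$-valued functions of $x$ (for a.e.\ $t$), whose well-posedness and continuity properties reduce, via Fubini, to the estimate displayed above. This is precisely the content of the low-regularity Leibniz rule collected in the Appendix, which I would invoke to close the argument without ever having to give pointwise sense to the curve traces of $F$.
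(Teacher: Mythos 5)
Your proposal is correct and matches the paper's argument in all essential respects: continuity of $H$ via the absolute continuity of the integral (your Cauchy--Schwarz bound through $|R(t,x)\triangle R(t',x')|$), identification of $H_x$ through the low-regularity Leibniz rule of the Appendix, the interior/boundary-strip splitting of the time difference quotient with the thin-rectangle estimate for $R(t{+}h,x)$ when $x\in(\ell(t),\ell(t{+}h))$, and continuity of translations in $L^2$ for the $C^0([0,\frac{\ell_0}{2}];L^2)$ statements; your observation that the top boundary term vanishes because $\gamma_1(t;t,x)=\gamma_2(t;t,x)=x$ is exactly why no term $G(t;t,x)$ appears in $H_t$. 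The smooth-approximation detour you sketch for the distributional derivatives is a valid but redundant alternative to the direct application of Theorem~\ref{differentiation}; the only point to watch is that on $\Omega_3'$ the trace estimate must also be run along the curved boundary $\sigma=\tau-\omega(t{+}x)$, where the Jacobian of the change of variables is precisely the factor $\dot\omega(x{+}t)$ already present in \eqref{cunoelle2t}--\eqref{cunoelle2x} (with the degenerate case $\dot\omega=0$ handled via Lemma~\ref{areaformula} and Corollary~\ref{nullsets}).
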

\begin{proof}
	The continuity of $H$ in $\overline{\Omega'}$ follows from the absolute continuity of the integral.\par
	We define $\displaystyle G(\tau;t,x):=\int_{\gamma_1(\tau;t,x)}^{\gamma_2(\tau;t,x)} F(\tau,\sigma) \d \sigma$, so that $\displaystyle H(t,x)=\int_{0}^{t}G(\tau;t,x)\d\tau$, and we notice that for every $t\in\left[0,\frac{\ell_0}{2}\right]$ the function $(x,\tau)\mapsto G(\tau;t,x)$ satisfies the assumptions of Theorem~\ref{differentiation}; hence, exploiting the fact that $H(t,\ell(t))=0$ for every $t\in[0,t^*]$ and recalling Remark~\ref{remksobolev}, we get that $H(t,\cdot)$ belongs to $H^1(0,+\infty)$ and so \eqref{Hx} follows. By direct computations one can show that for every $t\in\left[0,\frac{\ell_0}{2}\right]$ the difference quotient in \eqref{Ht} converges to $H_t(t,x)$ for a.e. $x\in(0,+\infty)$; to prove that it converges even in the sense of $L^2(0,+\infty)$ we compute (we assume $h>0$):
	\begin{align*}
		\int_{0}^{+\infty}\left|\frac{H(t+h,x)-H(t,x)}{h}-H_t(t,x)\right|^2\d x=&\int_{0}^{\ell(t)}\left|\frac{H(t+h,x)-H(t,x)}{h}-H_t(t,x)\right|^2\d x\\
		&+\frac{1}{h^2}\int_{\ell(t)}^{\ell(t+h)}|H(t+h,x)|^2\d x.
	\end{align*}
	It is easy to see that the first integral goes to zero as $h\to0^+$, while for the second one we estimate:
	\begin{align*}
		\frac{1}{h^2}\int_{\ell(t)}^{\ell(t+h)}|H(t+h,x)|^2\d x&\le\frac{1}{h^2}\int_{\ell(t)}^{\ell(t+h)}\!\!\!\!|R(t+h,x)|\left(\iint_{R(t+h,x)}\!\!\!\!|F(\tau,\sigma)|^2\d\sigma\d\tau \right)\d x\\
		&\le \frac{1}{h^2}\int_{\ell(t)}^{\ell(t+h)}\!\!\!\!h(t+h)\left(\iint_{\widetilde{R}_h(t)}\!\!\!\!|F(\tau,\sigma)|^2\d\sigma\d\tau \right)\d x\\
		&\le(t+h)\left(\frac{\ell(t{+}h)-\ell(t)}{h}\right)\iint_{\widetilde{R}_h(t)}\!\!\!\!|F(\tau,\sigma)|^2\d\sigma\d\tau\\
		&\le(t+h)\iint_{\widetilde{R}_h(t)}\!\!\!\!|F(\tau,\sigma)|^2\d\sigma\d\tau=:(\ast),		
	\end{align*} 
	where we introduced the set $\widetilde{R}_h(t):=\{(\tau,\sigma)\in\Omega\mid0<\tau<t{+}h,\,\,\,\tau{-}t{-}h{+}\ell(t)<\sigma<\tau{-}t{+}\ell(t)\}$. By dominated convergence $(\ast)$ goes to zero as $h\to0^+$, so \eqref{Ht} is proved.\par 
	We conclude recalling that, arguing as before, the continuity of translations in $L^2(0,+\infty)$ ensures that $H_t$ and $H_x$ belong to $C^0([0,\frac{\ell_0}{2}];L^2(0,+\infty))$ (exploiting the definition of $\gamma_1$ and $\gamma_2$ given by \eqref{bordi}). In particular this yields $H\in\widetilde{H}^1(\Omega')$.
\end{proof}
\begin{rmk}\label{cunoellermk}
	By \eqref{bordi} one gets that for every $t\in\left[0,\frac{\ell_0}{2}\right]$ more explicit expressions for $H_t(t,\cdot)$ and $H_x(t,\cdot)$, valid for a.e. $x\in(0,\ell(t))$, are respectively
	\begin{subequations}\label{cunoelle}
			\begin{equation}\label{cunoelle2t}
		\!\!H_t(t,x)=\begin{cases}
			\displaystyle\int_{0}^{t}F(\tau,x{+}t{-}\tau)\d\tau+\displaystyle\int_{0}^{t}F(\tau,x{-}t{+}\tau)\d\tau,  &\Omega_1', \\
			\displaystyle\int_{0}^{t}F(\tau,x{+}t{-}\tau)\d\tau-	\displaystyle\int_{0}^{t{-}x}F(\tau,t{-}x{-}\tau)\d\tau+\displaystyle\int_{t{-}x}^{t}F(\tau,x{-}t{+}\tau)\d\tau, & \Omega_2',\\
			\displaystyle\int_{0}^{t}\!\!\!F(\tau,x{-}t{+}\tau)\d\tau{-}\dot\omega(x{+}t)\!\!\displaystyle\int_{0}^{\psi^{-1}(x{+}t)}\!\!\!\!\!\!\!\!\!\!\!\!\!\!\!\!\!\!\!\!\!F(\tau,\tau{-}\omega(x{+}t))\d\tau{+}\!\!\displaystyle\int_{\psi^{-1}(x{+}t)}^{t}\!\!\!\!\!\!\!\!\!\!\!\!\!\!\!\!\!\!\!\!F(\tau,x{+}t{-}\tau)\d\tau, &\Omega_3',
			\end{cases}
			\end{equation}
			\begin{equation}\label{cunoelle2x}
			\!\!H_x(t,x)=\begin{cases}
			\displaystyle\int_{0}^{t}F(\tau,x{+}t{-}\tau)\d\tau-\displaystyle\int_{0}^{t}F(\tau,x{-}t{+}\tau)\d\tau, & \Omega_1', \\
			\displaystyle\int_{0}^{t}F(\tau,x{+}t{-}\tau)\d\tau+	\displaystyle\int_{0}^{t{-}x}F(\tau,t{-}x{-}\tau)\d\tau-\displaystyle\int_{t{-}x}^{t}F(\tau,x{-}t{+}\tau)\d\tau, & \Omega_2',\\
			\!\!-\!\!\displaystyle\int_{0}^{t}\!\!\!\!\!F(\tau,x{-}t{+}\tau)\d\tau\!-\!\dot\omega(x{+}t)\displaystyle\int_{0}^{\psi^{-1}(x{+}t)}\!\!\!\!\!\!\!\!\!\!\!\!\!\!\!\!\!\!\!\!\!\!\!F(\tau,\tau{-}\omega(x{+}t))\d\tau\!+\!\!\displaystyle\int_{\psi^{-1}(x{+}t)}^{t}\!\!\!\!\!\!\!\!\!\!\!\!\!\!\!\!\!\!\!\!\!F(\tau,x{+}t{-}\tau)\d\tau, & \Omega_3'.
			\end{cases}
			\end{equation}
	\end{subequations}
\end{rmk}
Since by Lemmas~\ref{A} and \ref{derivatives} the right-hand side in \eqref{Duhamel} is continuous on $\overline{\Omega'}$, every solution $v\in\widetilde{H}^1(\Omega')$ of problem \eqref{problem2} admits a representative, still denoted by $v$, which is continuous on $\overline{\Omega'}$ and such that (exploiting \eqref{homsol} and \eqref{acca}):
\begin{itemize}
	\item[-] $v(t,\ell(t))=0$ for every $t\in[0,t^*]$,
	\item[-] $v(t,0)=z(t)$ for every $t\in\left[0,\ell_0\right]$,
	\item[-] $v(0,x)=v_0(x)$ for every $x\in[0,\ell_0]$.
\end{itemize}
Moreover (the continuous representative of) the solution $v$ belongs to $C^0([0,\frac{\ell_0}{2}];H^1(0,+\infty))$ and to $C^1([0,\frac{\ell_0}{2}];L^2(0,+\infty))$ and  by \eqref{At}, \eqref{Ht} and \eqref{homsol}, \eqref{cunoelle} we deduce:
\begin{itemize}
	\item[-] $v_t(t,\cdot)\xrightarrow[t\to 0^+]{L^2(0,\ell_0)}v_1$,
	\item[-] $v_t(0,x)=v_1(x)$ for a.e. $x\in[0,\ell_0]$.
\end{itemize} 

In order to find existence (and uniqueness) of solutions to problem \eqref{problem2}, and hence to problem~\eqref{problem1}, we look for a fixed point of the linear operator $\mathfrak L\colon C^0(\overline{\Omega'})\to C^0(\overline{\Omega'})$ defined as: \begin{equation}\label{Phi}
	\mathfrak L v(t,x):=A(t,x)+\frac{\nu^2}{8}\iint_{R(t,x)}v(\tau,\sigma)\d\sigma \d\tau .
\end{equation}
	\begin{prop}\label{contractionCl}
		Fix $\nu\ge0$, $\ell_0>0$ and consider $v_0$, $v_1$ and $z$ satisfying \eqref{bdryregularity} and \eqref{compatibility0}. Assume that $\ell\colon [0,+\infty)\to[\ell_0,+\infty)$ satisfies \eqref{elle}.\par 
		If $ T\in\left(0,\frac{\ell_0}{2}\right)$ satisfies $\nu^2\ell_0 T<4$, then the map $\mathfrak L$ in \eqref{Phi} is a contraction from $C^0(\overline{\Omega_T})$ into itself.
	\end{prop}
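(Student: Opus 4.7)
The statement has two parts. First I need $\mathfrak{L}$ to map $C^0(\overline{\Omega_T})$ into itself. For this I invoke Lemma~\ref{A}, which gives $A \in C^0(\overline{\Omega'})$, and this covers $\overline{\Omega_T}$ since $T < \ell_0/2 \le t^*$. For the integral term, the boundaries $\gamma_1,\gamma_2$ in \eqref{bordi} depend continuously on $(t,x)$, $R(t,x) \subset \overline{\Omega_T}$ whenever $(t,x) \in \overline{\Omega_T}$ (since $\tau < t \le T$), and any $v \in C^0(\overline{\Omega_T})$ is bounded there, so the area-type integral in \eqref{Phi} is continuous in $(t,x)$.

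For the contraction, by the linearity of $v \mapsto \mathfrak{L}v - A$, for any $v_1,v_2 \in C^0(\overline{\Omega_T})$ I get the pointwise bound
\[
|\mathfrak{L}v_1(t,x) - \mathfrak{L}v_2(t,x)| \le \frac{\nu^2}{8}\, |R(t,x)|\, \|v_1 - v_2\|_{C^0(\overline{\Omega_T})},
\]
so everything reduces to a uniform upper bound on $|R(t,x)|$ for $(t,x) \in \overline{\Omega_T}$. I would establish $|R(t,x)| \le t^2$ case by case from \eqref{bordi}. In $\Omega'_1$ the vertical width equals $2(t-\tau)$ and integrates to $t^2$. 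In $\Omega'_2$, splitting the $\tau$-integral at $\tau = t-x$ gives $2x(t-x) + x^2 = t^2 - (t-x)^2 \le t^2$. In $\Omega'_3$, setting $s = \psi^{-1}(t+x)$ one has $\omega(t+x) = s - \ell(s) = 2s - (t+x)$, so the width is the constant $2(t-s)$ on $[0,s]$ and $2(t-\tau)$ on $[s,t]$, yielding $2s(t-s) + (t-s)^2 = t^2 - s^2 \le t^2$.

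Combining $|R(t,x)| \le t^2 \le T^2$ with $T < \ell_0/2$ gives $|R(t,x)| \le T\ell_0/2$, hence
\[
\|\mathfrak{L}v_1 - \mathfrak{L}v_2\|_{C^0(\overline{\Omega_T})} \le \frac{\nu^2 \ell_0 T}{16}\, \|v_1 - v_2\|_{C^0(\overline{\Omega_T})} < \frac{1}{4}\, \|v_1 - v_2\|_{C^0(\overline{\Omega_T})}
\]
by $\nu^2 \ell_0 T < 4$, proving the contraction. The main obstacle is the geometric computation of $|R(t,x)|$ in the $\Omega'_3$ case; the key simplification is the identity $\omega(t+x) = 2\psi^{-1}(t+x) - (t+x)$, which kills the apparent $\tau$-dependence in $\gamma_2 - \gamma_1$ on $[0,s]$ and makes the area computable in closed form. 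Everything else is a routine contraction-mapping estimate.
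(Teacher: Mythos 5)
Your proof is correct and follows essentially the same route as the paper: continuity of $\mathfrak L v$ on $\overline{\Omega_T}$ via Lemma~\ref{A} and the continuity of the integral over $R(t,x)$ (for which the paper invokes Lemma~\ref{derivatives}), followed by the pointwise estimate $|\mathfrak L v^1(t,x)-\mathfrak L v^2(t,x)|\le\frac{\nu^2}{8}|R(t,x)|\,\Vert v^1-v^2\Vert_{C^0(\overline{\Omega_T})}$. The only difference is that you compute $|R(t,x)|$ exactly case by case (obtaining $|R(t,x)|\le t^2$), whereas the paper simply uses $R(t,x)\subseteq\Omega_T$ and $|\Omega_T|\le 2\ell_0 T$, which already gives the contraction constant $\nu^2\ell_0T/4<1$; your sharper geometric computation is correct (including the identity $\omega(t+x)=2\psi^{-1}(t+x)-(t+x)$ in the $\Omega_3'$ case) but not needed for the conclusion.
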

	\begin{proof}
		By Lemmas~\ref{A} and \ref{derivatives} operator $\mathfrak L$ maps $C^0(\overline{\Omega_T})$ into itself. Pick $v^1,\,v^2\in C^0(\overline{\Omega_T})$ and let $(t,x)\in\overline{\Omega_T}$, then
		\begin{align*}
			|\mathfrak Lv^1(t,x)-\mathfrak Lv^2(t,x)|&\le\frac{\nu^2}{8}\iint_{R(t,x)}|v^1(\tau,\sigma)-v^2(\tau,\sigma)|\d\sigma\d\tau\le\frac{\nu^2}{8}|R(t,x)|\Vert v^1-v^2\Vert_{C^0(\overline{\Omega_{T}})}\\
			&\le\frac{\nu^2}{8}|\Omega_T|\Vert v^1-v^2\Vert_{C^0(\overline{\Omega_{T}})}\le\frac{\nu^2\ell_0 T}{4}\Vert v^1-v^2\Vert_{C^0(\overline{\Omega_{T}})}.
		\end{align*}
		Since $\nu^2\ell_0T<4$ we conclude.
	\end{proof}
We are now in a position to state and prove the first main result of the paper, regarding the existence and uniqueness of solutions of \eqref{problem2}, and hence of \eqref{problem1} (see Remark~\ref{equivalent}), when the debonding front $\ell$ is assigned:
\begin{thm}\label{globalsolutionprescr}
	Fix $\nu\ge0$, $\ell_0>0$ and consider $v_0$, $v_1$ and $z$ satisfying \eqref{bdryregularity} and \eqref{compatibility0}. Assume that $\ell\colon [0,+\infty)\to[\ell_0,+\infty)$ satisfies \eqref{elle}.\par 
	Then there exists a unique $v\in\widetilde{H}{^1}(\Omega)$ solution of \eqref{problem2}. Moreover $v$ has a continuous representative on $\overline{\Omega}$, still denoted by $v$, and, setting $v\equiv0$ outside $\overline{\Omega}$, it holds:
	\begin{equation*}
		v\in C^0([0,+\infty);H^1(0,+\infty))\cap C^1([0,+\infty);L^2(0,+\infty)). 
	\end{equation*}
	\end{thm}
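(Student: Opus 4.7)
The plan is to iterate the contraction of Proposition~\ref{contractionCl} on successive time strips. First, choose $T_1\in(0,\ell_0/2)$ with $\nu^2\ell_0 T_1<4$: Proposition~\ref{contractionCl} yields a unique fixed point $v\in C^0(\overline{\Omega_{T_1}})$ of $\mathfrak L$. Applying Lemmas~\ref{A} and~\ref{derivatives} to the right-hand side of $\mathfrak L v$ (extended by zero outside $\overline\Omega$) shows that this fixed point inherits the regularity $C^0([0,T_1];H^1(0,+\infty))\cap C^1([0,T_1];L^2(0,+\infty))$, and in particular $v\in H^1(\Omega_{T_1})$. Since $T_1<\ell_0/2$ implies $\Omega_{T_1}\subset\Omega'$, Proposition~\ref{representation} identifies $v$ as the unique solution of \eqref{problem2} on $\Omega_{T_1}$, with initial and boundary data attained in the senses listed right after Lemma~\ref{derivatives}.

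To continue past $T_1$, I rerun the same argument with the shifted data $\tilde\ell(s):=\ell(T_1+s)$, $\tilde z(s):=z(T_1+s)$, $\tilde v_0(x):=v(T_1,x)$, $\tilde v_1(x):=v_t(T_1,x)$ and new initial width $\tilde\ell_0:=\ell(T_1)\ge\ell_0$. The assumptions \eqref{elle}--\eqref{compatibility0} transfer: the bulk regularity of $\tilde v_0,\tilde v_1,\tilde z$ comes from the first step, while the compatibility conditions reduce to $v(T_1,0)=z(T_1)$ and $v(T_1,\ell(T_1))=0$, both true by the previous step. A second application of Proposition~\ref{contractionCl} to the shifted problem gives a unique continuation on $[T_1,T_1+T_2]$ for any $T_2\in(0,\tilde\ell_0/2)$ with $\nu^2\tilde\ell_0 T_2<4$. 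Continuity of $v$ in $H^1(0,+\infty)$ and of $v_t$ in $L^2(0,+\infty)$ across $t=T_1$ guarantees that the two pieces glue to an element of $\widetilde H^1(\Omega_{T_1+T_2})$ and that no singular contribution is produced along the interface, so \eqref{problem2} still holds distributionally on the enlarged strip.

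Iterating produces a sequence $t_i:=T_1+\cdots+T_i$ in which each $T_i$ may be chosen equal to $\min\!\bigl(\ell(t_{i-1})/2,\,4/(\nu^2\ell(t_{i-1}))\bigr)$. Using the Lipschitz bound $\ell(t)\le\ell_0+t$ one obtains $T_i\ge c/(\ell_0+t_{i-1})$ for some $c=c(\nu,\ell_0)>0$, whence
\begin{equation*}
(\ell_0+t_i)^2\ge(\ell_0+t_{i-1})^2+2c,
\end{equation*}
so that $t_i\to+\infty$. Thus the iteration exhausts $[0,+\infty)$ and produces a global $v\in\widetilde H^1(\Omega)$ solving \eqref{problem2}; step-by-step uniqueness promotes to global uniqueness, and the local $C^0H^1\cap C^1L^2$-regularity on each strip glues continuously across the interfaces. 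The only delicate point is precisely this divergence of $\{t_i\}$: the contraction radius $4/(\nu^2\ell)$ shrinks as $\ell$ grows, and it is only the linear upper bound on $\ell$ that makes the step sizes decay slowly enough to sum to infinity.
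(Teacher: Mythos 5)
Your proposal is correct and follows essentially the same route as the paper: contraction via Proposition~\ref{contractionCl} on a first short strip, regularity and identification of the fixed point as the solution via Lemmas~\ref{A}, \ref{derivatives} and Proposition~\ref{representation}, restart from $T_1$ after checking the compatibility conditions, and gluing. The only (minor) divergence is in the last step: the paper deduces $T_k\to+\infty$ softly, from the continuity and positivity/finiteness of $\ell$ (if $T_k\to T^*<\infty$ the increments cannot vanish), whereas you give a quantitative estimate $T_i\ge c/(\ell_0+t_{i-1})$ using $\ell(t)\le\ell_0+t$; both arguments are valid.
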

\begin{proof}
	By Proposition~\ref{contractionCl} we deduce the existence of a unique continuous function $v^1$ satisfying~\eqref{Duhamel} in $\overline{\Omega_{T_1}}$, taking for istance $\displaystyle T_1=\frac 12\min\left\{\frac{\ell_0}{2},\frac{4}{\nu^2\ell_0}\right\}$ $\left(\displaystyle T_1=\frac{\ell_0}{4}\text{ if } \nu=0\right)$.\par
	By Lemmas~\ref{A} and \ref{derivatives} one gets that $v^1$ is in ${H}{^1}(\Omega_{T_1})$ and moreover that it belongs to $  C^0([0,T_1];H^1(0,+\infty))\cap C^1([0,T_1];L^2(0,+\infty))$, while Proposition~\ref{representation} ensures that $v^1$ solves problem \eqref{problem2} in $\Omega_{T_1}$.\par
	Now we can restart the argument from time $T_1$ replacing $\ell_0$ by $\ell_1:=\ell(T_1)$, $v_0$ by $v^1(T_1,\cdot)$ and $v_1$ by $v^1_t(T_1,\cdot)$; indeed notice that $v^1(T_1,\cdot)\in H^1(0,\ell_1)$, $v^1_t(T_1,\cdot)\in L^2(0,\ell_1)$ and that they satisfy the compatibility conditions $v^1(T_1,0)=z(T_1)$ and $v^1(T_1,\ell_1)=0$. Arguing as before we get the existence of a unique solution $v^2$ of \eqref{problem2} in $\Omega_{T_2}\setminus\Omega_{T_1}$, with $\displaystyle T_2=T_1+\frac 12\min\left\{\frac{\ell_1}{2},\frac{4}{\nu^2\ell_1}\right\}$, belonging to $C^0([T_1,T_2];H^1(0,+\infty))\cap C^1([T_1,T_2];L^2(0,+\infty))$.\par
	Then the function $v(t,x)=\begin{cases}
	v^1(t,x),&\mbox{if }(t,x)\in\overline{\Omega_{T_1}},\\
	v^2(t,x),&\mbox{if }(t,x)\in\overline{\Omega_{T_2}\setminus\Omega_{T_1}},
	\end{cases}$ is in $C^0([0,T_2];H^1(0,+\infty))$ and in $ C^1([0,T_2];L^2(0,+\infty))$ and it is easy to see that it is the only solution of \eqref{problem2} in $\Omega_{T_2}$.\par 
	To conclude we need to prove that the sequence of times $\{T_k\}$ defined recursively by
	\begin{equation*}
		\begin{cases}
		\displaystyle T_k=T_{k-1}+\frac 12\min\left\{\frac{\ell(T_{k-1})}{2},\frac{4}{\nu^2\ell(T_{k-1})}\right\}, &\text{if }k\ge 1,\\
		T_0=0,
		\end{cases}
	\end{equation*}
	diverges. This follows easily observing that $\{T_k\}$ is increasing and recalling that $0<\ell(t)<+\infty$ for every $t\in[0,+\infty)$.
	\end{proof}
	\begin{rmk}[\textbf{Regularity}]\label{lipschitzregglob}
	If we assume $v_0\in C^{0,1}([0,\ell_0])$, $v_1\in L^\infty(0,\ell_0)$, $z\in\widetilde{C}{^{0,1}}([0,+\infty)) $ satisfy the compatibility conditions \eqref{compatibility0}, then by \eqref{homsol} and \eqref{cunoelle} the (continuous representative of the) solution $v$ belongs to $\widetilde C^{0,1}(\overline{\Omega})$ and $v_t(t,\cdot)\in L^\infty(0,+\infty)$ for every $t\in[0,+\infty)$.
	\end{rmk}
	\begin{rmk}[\textbf{More regularity}]\label{morereg}
		If we assume more regularity on $v_0$, $v_1$, $z$ and on the debonding front $\ell$, in order to get that the solution $v$ possesses the same regularity we need to add more compatibility conditions. For instance, if $\ell\in \widetilde C^{1,1}([0,+\infty))$ satisfies \eqref{elleb}, if $v_0\in C^{1,1}([0,\ell_0])$, $v_1\in C^{0,1}([0,\ell_0])$, $z\in\widetilde{C}{^{1,1}}([0,+\infty)) $ satisfy \eqref{compatibility0}, to get $v\in \widetilde C^{1,1}(\overline{\Omega})$ we also need to assume the following first order compatibility conditions:
		\begin{equation}\label{comp1}
			v_1(0)=\dot z(0)\quad\mbox{ and }\quad v_1(\ell_0)+\dot{\ell}(0)\dot v_0(\ell_0)=0.
		\end{equation}
		Indeed, under these assumptions the function $A$ in \eqref{homsol} belongs to $\widetilde C^{1,1}(\overline{\Omega'})$; moreover, exploiting~\eqref{cunoelle} and the fact that by Remark~\ref{lipschitzregglob} we already know that the solution $v$ is in $\widetilde C^{0,1}(\overline{\Omega})$, one can deduce that the function $H(t,x) =\displaystyle\iint_{R(t,x)}v(\tau,\sigma)\d\sigma \d\tau$ in \eqref{acca} belongs to $\widetilde C^{1,1}(\overline{\Omega'})$ too. Hence representation formula \eqref{Duhamel} ensures that $v$ belongs to $C^{1,1}(\overline{\Omega_{T_1}})$ for some $T_1\in\left(0,\frac{\ell_0}{2}\right)$; since $v(t,0)=z(t)$ and $v(t,\ell(t))=0$ for every $t\in[0,+\infty)$ we notice that condition \eqref{comp1} holds at time $T_1$ too, and reasoning as in the proof of Theorem~\ref{globalsolutionprescr} one can conclude.\par 
		We also notice that, coming back to $u_0$, $u_1$ and $w$, \eqref{comp1} is equivalent to
		\begin{equation}\label{comp2}
		u_1(0)=\dot w(0)\quad\mbox{ and }\quad u_1(\ell_0)+\dot{\ell}(0)\dot u_0(\ell_0)=0.
		\end{equation}
	\end{rmk}
We conclude this first Section pointing out that the choice of working with $H^1$ and $L^2$ functions is only due to the energetic considerations we make in the next Sections in order to formulate the coupled problem. Indeed all the results presented up to now still remains valid in a $W^{1,1}$ and $L^1$ setting, with the obvious changes. 
	\section{Energetic analysis}\label{sec2}
	This Section is devoted to the study of the total energy of the solution $u$ to problem \eqref{problem1} given by Theorem~\ref{globalsolutionprescr} and Remark~\ref{equivalent}; this analysis will be used in Section~\ref{sec3} to introduce the notion of dynamic energy release rate.\par
	Fix $\nu\ge 0$, $\ell_0>0$ and a function $\ell \colon [0,+\infty) \to [\ell_0,+\infty)$ satisfying \eqref{elle}, and consider $u_0$, $u_1$ and $w$ satisfying \eqref{bdryregularity} and \eqref{compatibility0}; let $u$ be the solution of \eqref{problem1} associated with $\ell$, $u_0$, $u_1$ and $w$. For $t\in[0,+\infty)$ we introduce the internal energy of $u$:
	\begin{equation*}
		\mathcal{E}(t):=\frac 12 \int_{0}^{\ell(t)}\left(u_t^2(t,x)+u_x^2(t,x)\right)\d x,
	\end{equation*}
	 where the first term represents the kinetic energy and the second one the potential energy, and the energy dissipated by viscosity:
	 \begin{equation*}
	 \mathcal{A}(t):=\nu\int_{0}^{t}\int_{0}^{\ell(\tau)}u_t^2(\tau,\sigma)\d\sigma\d\tau.
	 \end{equation*}
	We then consider the total energy of $u$:
	\begin{equation}\label{totalenergy}
		 \mathcal{T}(t):=\mathcal{E}(t)+\mathcal{A}(t).
	\end{equation}
	As in Section~\ref{sec1} we introduce the auxiliary function $v(t,x)=e^{\nu t/2}u(t,x)$ and we consider $v_0$ and $v_1$ given by \eqref{datav}.
	\begin{prop}\label{energyderivative}
		The total energy $\mathcal{T}$ defined in \eqref{totalenergy} belongs to $ AC([0,+\infty))$ and for a.e. $t\in\left[0,\frac{\ell_0}{2}\right]$ the following formulas hold true:
		\begin{subequations}
		\begin{align}		
		&\begin{aligned}\label{totderu}
			\dot{\mathcal{T}}(t)=&-\frac{\dot\ell(t)}{2}\frac{1-\dot\ell(t)}{1+\dot\ell(t)}\left[\dot u_0(\ell(t){-}t)-u_1(\ell(t){-}t)+\nu\int_{0}^{t}u_t(\tau,\tau{-}t{+}\ell(t))\d\tau\right]^2\\
			&+\dot w(t)\left[\dot w(t)-\left(\dot u_0(t)+u_1(t)-\nu\int_{0}^{t}u_t(\tau,t{-}\tau)\d\tau\right)\right],
		\end{aligned}\\
			&\begin{aligned}\label{totderv}
			\dot{\mathcal{T}}(t)=&-\frac{\dot\ell(t)}{2}\frac{1-\dot\ell(t)}{1+\dot\ell(t)}e^{-\nu t}\left[\dot v_0(\ell(t){-}t)-v_1(\ell(t){-}t)-\frac{\nu^2}{4}\int_{0}^{t}v(\tau,\tau{-}t{+}\ell(t))\d\tau\right]^2\\
			&+\dot w(t)\left[\dot w(t)+\frac{\nu}{2}w(t)-e^{-\frac{\nu t}{2}}\left(\dot v_0(t)+v_1(t)+\frac{\nu^2}{4}\int_{0}^{t}v(\tau,t{-}\tau)\d\tau\right)\right],
			\end{aligned}
		\end{align}	
	\end{subequations}
		where the products between $1-\dot\ell(t)$ and the expressions within square brackets are meant as in Remark~\ref{precisini}.
	\end{prop}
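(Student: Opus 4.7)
The approach is to derive the boundary identity for $\dot{\mathcal T}$ by testing the equation with $u_t$, and then to identify the traces $u_x(t,0)$ and $u_x(t,\ell(t))$ from the Duhamel representation \eqref{Duhamelu}. Differentiating $\mathcal E(t)$ under the integral with the Leibniz rule for the moving endpoint $\ell(t)$, substituting $u_{tt}=u_{xx}-\nu u_t$, and rewriting $u_t u_{xx}+u_x u_{xt}=\partial_x(u_t u_x)$ yields
\[
\dot{\mathcal E}(t) = [u_t u_x]_{x=0}^{x=\ell(t)} - \nu\!\!\int_0^{\ell(t)}\!\! u_t^2\,\d x + \frac{\dot\ell(t)}{2}\bigl(u_t^2+u_x^2\bigr)(t,\ell(t)),
\]
and adding $\dot{\mathcal A}(t)=\nu\!\int_0^{\ell(t)}u_t^2\,\d x$ cancels the damping. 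Using $u_t(t,0)=\dot w(t)$, and differentiating $u(t,\ell(t))=0$ along the curve to get $u_t(t,\ell(t))=-\dot\ell(t)\,u_x(t,\ell(t))$, simplification gives the compact form
\[
\dot{\mathcal T}(t) = -\dot w(t)\,u_x(t,0)-\frac{\dot\ell(t)(1-\dot\ell^2(t))}{2}\,u_x^2(t,\ell(t)).
\]

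For $t\in[0,\ell_0/2]$ the two traces become explicit through \eqref{homsol} and \eqref{Duhamelu}. The point $(t,0)$ lies in $\overline{\Omega_2'}$, so the $\Omega_2'$-branch combined with \eqref{cunoelle2x} at $x=0$ yields
\[
u_x(t,0) = -\dot w(t)+\dot u_0(t)+u_1(t)-\nu\!\!\int_0^t\!\! u_t(\tau,t{-}\tau)\,\d\tau.
\]
The point $(t,\ell(t))$ lies in $\overline{\Omega_3'}$; using $-\omega(t{+}\ell(t))=\ell(t){-}t$ and $\dot\omega(t{+}\ell(t))=(1-\dot\ell(t))/(1+\dot\ell(t))$, the $\Omega_3'$-branch collapses to
\[
u_x(t,\ell(t)) = \frac{1}{1+\dot\ell(t)}\Bigl[\dot u_0(\ell(t){-}t)-u_1(\ell(t){-}t)+\nu\!\!\int_0^t\!\! u_t(\tau,\tau{+}\ell(t){-}t)\,\d\tau\Bigr].
\]
Inserting these into the compact form and factoring $1-\dot\ell^2=(1-\dot\ell)(1+\dot\ell)$ produces \eqref{totderu}. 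Formula \eqref{totderv} then follows by substituting $u=e^{-\nu t/2}v$, $u_t=e^{-\nu t/2}(v_t-\tfrac{\nu}{2}v)$, and translating the data through \eqref{datav}; equivalently, by running the same argument on the auxiliary equation in \eqref{problem2}.

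The main obstacle is rigour: under the natural regularity $u_0\in H^1$, $u_1\in L^2$, the equation holds only in the distributional sense of Definition \ref{sol}, and $\dot u_0$, $u_1$ are being evaluated along the moving curve $t\mapsto\ell(t){-}t$. The strategy is first to prove \eqref{totderu} classically for data and $\ell$ of the smoother class of Remark \ref{morereg} (so that $u\in\widetilde C^{1,1}(\overline\Omega)$ and every step above is justified), and then to approximate general data. The contraction estimate of Proposition \ref{contractionCl} improved via Lemmas \ref{A} and \ref{derivatives} provides continuous dependence in $C^0([0,T];H^1(0,+\infty))\cap C^1([0,T];L^2(0,+\infty))$, giving uniform convergence of $\mathcal T^n$ to $\mathcal T$, while continuity of translations in $L^2$ transfers the convergence of the explicit right-hand sides to $L^1_{\mathrm{loc}}$; the low-regularity Leibniz and chain rules collected in the Appendix are what close the latter step. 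Iterating along the step sequence $\{T_k\}$ from Theorem \ref{globalsolutionprescr} then extends absolute continuity of $\mathcal T$ from $[0,\ell_0/2]$ to all of $[0,+\infty)$.
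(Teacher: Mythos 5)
Your formal computation is correct and the trace identifications check out: testing the equation with $u_t$ gives $\dot{\mathcal T}(t)=-\dot w(t)\,u_x(t,0)-\tfrac12\dot\ell(t)(1-\dot\ell(t)^2)\,u_x^2(t,\ell(t))$, and evaluating $u_x=\dot{\hat a}_1(t{+}x)-\dot{\hat a}_2(t{-}x)-\tfrac{\nu}{2}h_1+\tfrac{\nu}{2}h_2$ at $x=0$ and $x=\ell(t)$ (using $-\omega(\psi(t))=\ell(t)-t$ and $\dot\omega(\psi(t))=\tfrac{1-\dot\ell}{1+\dot\ell}$) reproduces exactly the two bracketed expressions in \eqref{totderu}. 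This is, however, a genuinely different route from the paper's. The paper never forms pointwise traces: it substitutes the Duhamel representation into $\mathcal E(t)$, uses $(a{+}b)^2+(a{-}b)^2=2a^2+2b^2$ to decouple the forward and backward components, changes variables to the characteristic coordinates $y=t\pm x$ so that $\mathcal E(t)$ becomes a sum of integrals of squares of fixed $L^2$ functions over $t$-dependent intervals, and then differentiates with the low-regularity Leibniz rule of Theorem~\ref{differentiation}. That argument runs directly at the $H^1\times L^2$ level, for $\ell$ merely Lipschitz with $0\le\dot\ell\le1$, with no approximation.

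The gap in your proposal is the regularization step, which is where all the difficulty of the statement actually sits. To justify the integration by parts and the chain rule along $t\mapsto(t,\ell(t))$ you invoke the class of Remark~\ref{morereg}; but that class requires not only smoother data satisfying the first-order compatibility conditions \eqref{comp2} (which your approximating sequence must be built to respect, at both endpoints) but also $\ell\in\widetilde C^{1,1}$, whereas the Proposition allows $\ell$ Lipschitz with $\dot\ell=1$ on sets of positive measure. Smoothing $\ell$ as well is problematic: the right-hand side of \eqref{totderu} depends on $\dot\ell^n$ both through the explicit nonlinear factor $\dot\ell^n\tfrac{1-\dot\ell^n}{1+\dot\ell^n}$ and through the composition $\dot u_0(\ell^n(t)-t)$, and mollified fronts give only weak-$*$ convergence of $\dot\ell^n$, which does not pass through squares or compositions. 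Moreover, precisely where $\dot\ell=1$ the map $t\mapsto\ell(t)-t$ degenerates and point evaluation of the $L^2$ functions $\dot u_0,u_1$ along it is meaningless except through the Remark~\ref{precisini} convention, so the claimed $L^1_{\mathrm{loc}}$ convergence of the right-hand sides needs the change-of-variables Lemma~\ref{areaformula} applied to $\varphi(t)=t-\ell(t)$, not just continuity of translations. None of this is fatal in principle if one keeps $\ell$ fixed and only smooths the data, but then $u^n$ is merely $\widetilde C^{0,1}$ (Remark~\ref{lipschitzregglob}), the classical energy identity is not available, and the natural way to differentiate $\mathcal E^n$ is the paper's characteristic-coordinate computation — at which point the detour through traces buys nothing. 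As written, the limiting argument is a sketch of a nontrivial step rather than a proof of it.
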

\begin{rmk}\label{alltimes}
	One can obtain similar formulas for $\dot{\mc T}$ which are valid for a.e. $t\in[0,+\infty)$ arguing in the following way: fix $t_0>0$, then for a.e. $t\in\left[t_0,t_0+\frac{\ell(t_0)}{2}\right]$
	\begin{align*}
	\dot{\mathcal{T}}(t)=&-\frac{\dot\ell(t)}{2}\frac{1-\dot\ell(t)}{1+\dot\ell(t)}\left[u_x(t_0,\ell(t){-}t{+}t_0)-u_t(t_0,\ell(t){-}t{+}t_0)+\nu\int_{t_0}^{t}u_t(\tau,\tau{-}t{+}\ell(t))\d\tau\right]^2\\
	&+\dot w(t)\left[\dot w(t)-\left(u_x(t_0,t{-}t_0)+u_t(t_0,t{-}t_0)-\nu\int_{t_0}^{t}u_t(\tau,t{-}\tau)\d\tau\right)\right].
	\end{align*}
	and the analogous formula for \eqref{totderv} holds.
\end{rmk}
\begin{proof}[Proof of Proposition~\ref{energyderivative}]
	Let us define $T:=\ell_0/2$; we notice that by Remark~\ref{alltimes} it is enough to prove the Proposition in the time interval $[0,T]$. By \eqref{Duhamelu} we know that for every $(t,x)\in\overline{\Omega_T}$
	\begin{equation}\label{serve}
		u(t,x)=\hat{a}_1(t{+}x)+\hat{a}_2(t{-}x)-\frac{\nu}{2}\iint_{R(t,x)}u_t(\tau,\sigma)\d\tau\d\sigma,
	\end{equation}
	where $\hat{a}_1$ and $\hat{a}_2$ are as in \eqref{a1a2}, replacing $v_0$, $v_1$ and $z$ by $u_0$, $u_1$ and $w$, respectively.\par
	Moreover, by \eqref{serve}, Lemma~\ref{derivatives} and Remark~\ref{cunoellermk} we get for every $t\in[0,T]$
	\begin{subequations}
		\begin{equation*}
			u_t(t,x)=\dot{\hat{a}}_1(t{+}x)+\dot{\hat{a}}_2(t{-}x)-\frac{\nu}{2} h_1(t,x)-\frac{\nu}{2} h_2(t,x),\quad\quad\mbox{ for a.e. }x\in[0,\ell(t)],
		\end{equation*}
			\begin{equation*}
			u_x(t,x)=\dot{\hat{a}}_1(t{+}x)-\dot{\hat{a}}_2(t{-}x)-\frac{\nu}{2} h_1(t,x)+\frac{\nu}{2} h_2(t,x),\quad\quad\mbox{ for a.e. }x\in[0,\ell(t)],
			\end{equation*}
	\end{subequations}
where
\begin{align*}
	&h_1(t,x)=\begin{cases}
	\displaystyle\int_{0}^{t}u_t(\tau,t{+}x{-}\tau)\d\tau,&\mbox{if }0\le x\le\ell_0{-}t,\\
	{-}\dot\omega(t{+}x)\displaystyle\int_{0}^{\psi^{-1}(t{+}x)}\!\!\!\!\!\!\!\!\!\!\!\!\!\!\!\!u_t(\tau,\tau{-}\omega(t{+}x))\d\tau{+}\displaystyle\int_{\psi^{-1}(t{+}x)}^{t}\!\!\!\!\!\!\!\!\!\!\!\!\!\!\!u_t(\tau,t{+}x{-}\tau)\d\tau,&\mbox{if }\ell_0{-}t< x\le\ell(t),
	\end{cases}\\
&h_2(t,x)=\begin{cases}
\displaystyle\int_{0}^{t}u_t(\tau,\tau{-}t{+}x)\d\tau,&\mbox{if }t\le x\le\ell(t),\\
-\displaystyle\int_{0}^{t{-}x}u_t(\tau,t{-}x{-}\tau)\d\tau+\displaystyle\int_{t{-}x}^{t}u_t(\tau,\tau{-}t{+}x)\d\tau,&\mbox{if }0\le x<t.
\end{cases}
\end{align*}
Now we compute:
\begingroup
\allowdisplaybreaks
\begin{align*}
	\mathcal{E}(t)&=\frac 12\int_{0}^{\ell(t)}\left(\dot{\hat{a}}_1(t{+}x)+\dot{\hat{a}}_2(t{-}x)-\frac{\nu}{2} h_1(t,x)-\frac{\nu}{2} h_2(t,x)\right)^2\d x\\
	&\quad+\frac 12\int_{0}^{\ell(t)}\left(\dot{\hat{a}}_1(t{+}x)-\dot{\hat{a}}_2(t{-}x)-\frac{\nu}{2} h_1(t,x)+\frac{\nu}{2} h_2(t,x)\right)^2\d x\\
	&=\frac 12\int_{0}^{\ell(t)}\left[\left(\dot{\hat{a}}_1(t{+}x)-\frac{\nu}{2} h_1(t,x)\right)^2+\left(\dot{\hat{a}}_2(t{-}x)-\frac{\nu}{2} h_2(t,x)\right)^2\right]\d x\\
	&=\int_{t}^{t{+}\ell(t)}\left[\dot{\hat{a}}_1(y)-\frac{\nu}{2} h_1(t,y{-}t)\right]^2\d y+\int_{t{-}\ell(t)}^{t}\left[\dot{\hat{a}}_2(y)-\frac{\nu}{2} h_2(t,t{-}y)\right]^2\d y\\
	&=\int_{t}^{\ell_0}\left[\frac{\dot{u}_0(y)+u_1(y)}{2}-\frac{\nu}{2}\int_{0}^{t}u_t(\tau,y{-}\tau)\d\tau\right]^2\d y\\
	&\quad +\int_{t{-}\ell(t)}^{0}\left[\frac{\dot{u}_0({-}y)-u_1({-}y)}{2}+\frac{\nu}{2}\int_{0}^{t}u_t(\tau,\tau{-}y)\d\tau\right]^2\d y\\
	&\quad+\int_{\ell_0}^{t{+}\ell(t)}\!\!\left[\dot\omega(y)\left(\frac{\dot{u}_0({-}\omega(y)){-}u_1({-}\omega(y))}{2}{+}\frac{\nu}{2}\int_{0}^{\psi^{-1}(y)}\!\!\!\!\!\!\!\!\!\!\!\!\!\!\!\!\!u_t(\tau,\tau{-}\omega(y))\d\tau\right)\!\!{-}\frac{\nu}{2}\int_{\psi^{-1}(y)}^{t}\!\!\!\!\!\!\!\!\!\!\!\!\!u_t(\tau,y{-}\tau)\d\tau\right]^2\!\!\!\!\!\d y\\
	&\quad+\int_{0}^{t}\left[\dot w(y)-\frac{\dot{u}_0(y)+u_1(y)}{2}+\frac{\nu}{2}\int_{0}^{y}u_t(\tau,y{-}\tau)\d\tau-\frac{\nu}{2}\int_{y}^{t}u_t(\tau,\tau{-}y)\d\tau\right]^2\d y.
\end{align*}
\endgroup
It is easy to check that we can apply Theorem~\ref{differentiation} in the Appendix, so we obtain that $\mc E$ belongs to $AC([0,T])$ and that for a.e. $t\in[0,T]$ the following formula for its derivative holds true: 
\begin{align*}
	\dot{\mathcal{E}}(t)=&-\frac{\dot\ell(t)}{2}\frac{1-\dot\ell(t)}{1+\dot\ell(t)}\left[\dot u_0(\ell(t){-}t)-u_1(\ell(t){-}t)+\nu\int_{0}^{t}u_t(\tau,\tau{-}t{+}\ell(t))\d\tau\right]^2\\
	&+\dot w(t)\left[\dot w(t)-\left(\dot u_0(t)+u_1(t)-\nu\int_{0}^{t}u_t(\tau,t{-}\tau)\d\tau\right)\right]-\nu\int_{0}^{\ell(t)}u_t^2(t,x)\d x.
\end{align*}
Recalling that $\mc A$ is absolutely continuous by construction and that $\dot{\mc A}(t)=\displaystyle\nu\int_{0}^{\ell(t)}u_t^2(t,x)\d x$ for a.e. $t\in[0,T]$, we deduce that $\mc T$ belongs to $AC([0,T])$ and that formula \eqref{totderu} holds.\par 
To get \eqref{totderv} one argues in the same way with $v(t,x)=e^{\nu t/2}u(t,x)$, rewriting $\mathcal{E}$ as 
\begin{equation*}
	\mathcal{E}(t)=\frac{e^{-\nu t}}{2}\int_{0}^{\ell(t)}\left[\left(v_t(t,x)-\frac{\nu}{2}v(t,x)\right)^2+v_x^2(t,x)\right]\d x,
\end{equation*}
and recalling \eqref{Duhamel}.
\end{proof}
\section{Principles leading the debonding growth}\label{sec3}
In the first part of this Section we introduce the dynamic energy release rate in the context of our model, following~\cite{DMLazNar16}. In the second one we will use it to formulate Griffith's criterion, namely the energy criterion which rules the evolution of the debonding front.\par 
As before we fix $\nu\ge0$, $\ell_0>0$ and we consider $u_0$, $u_1$ and $w$ satisfying \eqref{bdryregularity} and \eqref{compatibility0}, but from now on the debonding front will be a function $\ell \colon [0,+\infty) \to [\ell_0,+\infty)$ satisfying \eqref{ellea} and such that
	\begin{equation}\label{ellemin}
	\ell(0)=\ell_0\mbox{ and } 0\le\dot\ell(t)< 1 \mbox{ for a.e. }t\in [0,+\infty).
	\end{equation}
	We want to underline that the requirement of \eqref{ellemin} in place of \eqref{elleb} is not merely a technical assumption needed to carry out all the mathematical arguments of the next Sections, although is crucial; it is instead a natural consequence of the Griffith's criterion the debonding front has to fulfill during its evolution, as the reader can check from the final formula \eqref{equation}.
\subsection{Dynamic energy release rate}
The notion of dynamic energy release has been developed in the framework of Fracture Mechanics to measure the amount of energy spent by the growth of the crack (see~\cite{Fre90} for more information); it is defined as the opposite of the derivative of the energy with respect to the measure of the evolved crack.\par  
To define it in the context of our debonding model we argue as in~\cite{DMLazNar16}: we fix $\bar t>0$ and we consider a function  $\tilde{w}\in \widetilde H^1(0,+\infty)$ and a function $\tilde\ell\colon [0,+\infty) \to [\ell_0,+\infty)$ satisfying \eqref{ellea}~and~\eqref{ellemin}, and such that 
\begin{equation*}
	\tilde{w}(t)= w(t) \quad \text{and} \quad
	\tilde\ell(t)=\ell(t) \quad \text{for every}\ t \in[0,\bar t \,] . 
	\end{equation*}
Let $u$ and $\tilde u$ be the solutions to problem \eqref{problem1} corresponding to $\ell$, $u_0$, $u_1$, $w$ and $\tilde\ell$, $u_0$, $u_1$, $\tilde w$, respectively, and for $t\in[0,+\infty)$ let us consider:
\begin{equation*}
	\mathcal{E}(t;\tilde{\ell},\tilde{w}):=\frac 12 \int_{0}^{\tilde\ell(t)}\left(\tilde u_t^2(t,x)+\tilde u_x^2(t,x)\right)\d x,
\end{equation*} 
\begin{equation*}
\mathcal{A}(t;\tilde{\ell},\tilde{w}):=\nu\int_{0}^{t} \int_{0}^{\tilde\ell(\tau)}\tilde u_t^2(\tau,\sigma)\d \sigma\d \tau,
\end{equation*} 
and
\begin{equation*}
\mathcal{T}(t;\tilde{\ell},\tilde{w}):=\mathcal{E}(t;\tilde{\ell},\tilde{w})+\mathcal{A}(t;\tilde{\ell},\tilde{w}),
\end{equation*}
where we stressed the dependence on $\tilde{\ell}$ and on $\tilde{w}$.\par
The formal definition of dynamic energy release rate at time $\bar t$ should be:
\begin{equation}\label{formal}
G(\bar t\,):=\lim\limits_{t\to\bar t^+}-\frac{\mathcal{T}(t;\tilde{\ell},\bar{w})-\mathcal{T}(\bar t;\ell,w)}{\tilde{\ell}(t)-\ell(\bar t\,)}=-\frac{1}{\dot{\tilde{\ell}}(\bar t\,)}\lim\limits_{t\to\bar t^+}\frac{\mathcal{T}(t;\tilde{\ell},\bar{w})-\mathcal{T}(\bar t;\ell,w)}{t-\bar t},
\end{equation}
where $\bar w\in\widetilde H^1(0,+\infty) $ is the constant extension of $w$ after $\bar t$.
\begin{rmk}
	The choice of the particular extension $\bar w$ in \eqref{formal} is needed in order to avoid including the work done by the external loading in the energy dissipated to debond the tape. 
\end{rmk}\noindent
By  Proposition~\ref{energyderivative} (see also Remark~\ref{alltimes}) for a.e. $t\in\left[0,\frac{\ell_0}{2}\right]$ we have
\begin{align*}
	\dot{\mathcal{T}}(t;\tilde\ell,\tilde{w})&=-\frac{\dot{\tilde\ell}(t)}{2}\frac{1-\dot{\tilde\ell}(t)}{1+\dot{\tilde\ell}(t)}e^{-\nu t}\left[\dot v_0(\tilde\ell(t){-}t)-v_1(\tilde\ell(t){-}t)-\frac{\nu^2}{4}\int_{0}^{t}\tilde v(\tau,\tau{-}t{+}\tilde\ell(t))\d\tau\right]^2\\
	&\quad+\dot{\tilde w}(t)\left[\dot{\tilde w}(t)+\frac{\nu}{2}\tilde w(t)-e^{-\frac{\nu t}{2}}\left(\dot v_0(t)+v_1(t)+\frac{\nu^2}{4}\int_{0}^{t}\tilde v(\tau,t{-}\tau)\d\tau\right)\right],
\end{align*}
where $\tilde v(t,x)=e^{\nu t/2}\tilde u(t,x)$ and $v_0$ and $v_1$ are given by \eqref{datav}.\par
Since in \eqref{formal} we want to compute the right derivative of $\mathcal T(t; \tilde\ell,\tilde w)$ precisely at $t = \bar t$, we need a slight improvement of Proposition~\ref{energyderivative} (see Theorem~\ref{DERR} below and the analogous Proposition~2.1 in~\cite{DMLazNar16}). With this aim we will require that there exist $\alpha,\,\beta \in \erre$ such that
\begin{subequations}
\begin{equation}
	\label{alphaleb}
	\lim\limits_{h\to 0^+}\frac1h \int_{\bar t}^{\bar t+h} \left|\dot{\tilde\ell}(t) - \alpha  \right|  \d t =0 ,
\end{equation}
\begin{equation}
	\label{betagammaleb}
	\lim\limits_{h\to 0^+}\frac1h \int_{\bar t}^{\bar t+h} \left|\dot{\tilde w}(t) - \beta  \right|^2  \d t =0 .
\end{equation}
\end{subequations}
\begin{thm}\label{DERR}
	Fix $\nu\ge 0$, $\ell_0>0$ and consider $u_0$, $u_1$ and $w$ satisfying \eqref{bdryregularity} and \eqref{compatibility0}. Assume that $\ell\colon[0,+\infty)\to[\ell_0,+\infty)$ satisfies \eqref{ellea} and \eqref{ellemin}.\par 
	Then there exists a set $N\subseteq[0,+\infty)$ of measure zero, depending only on $\ell,\,u_0,\,u_1$ and $w$, such that for every $\bar{t}\in[0,+\infty)\setminus N$ the following statement holds true:\\
	if $v_0$, $v_1$, $\tilde{\ell}$, $\tilde{w}$, $\tilde u$, $\tilde v$, $u$ and $v$ are as above, 
	if $\dot{\tilde{\ell}}$ and $\dot{\tilde{w}}$ satisfy \eqref{alphaleb} and \eqref{betagammaleb} respectively, then
	\begin{equation*}
		\dot{\mathcal{T}}_r(\bar{t};\tilde{\ell},\tilde{w}):=\lim\limits_{h\to0^+}\frac{\mathcal{T}(\bar{t}+h;\tilde{\ell},\tilde{w})-\mathcal{T}(\bar{t};\tilde{\ell},\tilde{w})}{h}\quad\quad\text{exists.}
	\end{equation*}
	Moreover, if $\bar t\in\left[0,\frac{\ell_0}{2}\right]\setminus N$, one has the explicit formula
	\begin{align*}
		\dot{\mathcal{T}}_r(\bar{t};\tilde{\ell},\tilde{w})&=-\frac{\alpha}{2}\frac{1-\alpha}{1+\alpha}e^{-\nu\bar{t}}\left[\dot{v}_0(\ell(\bar t\,){-}\bar t\,)-v_1(\ell(\bar t\,){-}\bar t\,)-\frac{\nu^2}{4}\int_{0}^{\bar{t}}v(\tau,\tau{-}\bar{t}{+}\ell(\bar t\,))\d\tau\right]^2\\
		&\quad+\beta\left[\beta+\frac{\nu}{2}w(\bar t\,)-e^{-\frac{\nu\bar{t}}{2}}\left(\dot{v}_0(\bar t\,)+v_1(\bar t\,)+\frac{\nu^2}{4} \int_{0}^{\bar{t}}v(\tau,\bar{t}{-}\tau)\d\tau\right)\right].
	\end{align*}
\end{thm}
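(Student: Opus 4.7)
Since $t\mapsto\mathcal T(t;\tilde\ell,\tilde w)$ is absolutely continuous by Proposition~\ref{energyderivative}, the plan is to write
$$\frac{\mathcal T(\bar t{+}h;\tilde\ell,\tilde w)-\mathcal T(\bar t;\tilde\ell,\tilde w)}{h}=\frac{1}{h}\int_{\bar t}^{\bar t+h}\dot{\mathcal T}(t;\tilde\ell,\tilde w)\,\mathrm d t$$
and pass to the limit $h\to 0^+$ inside this average. Using formula \eqref{totderv} (or its analogue from Remark~\ref{alltimes} when $\bar t>\ell_0/2$) I split $\dot{\mathcal T}(t;\tilde\ell,\tilde w)=I_1(t)+I_2(t)$ with $I_1(t)=-\tfrac{\dot{\tilde\ell}(t)}{2}\tfrac{1-\dot{\tilde\ell}(t)}{1+\dot{\tilde\ell}(t)}e^{-\nu t}B_1(t)^2$ and $I_2(t)=\dot{\tilde w}(t)\bigl(\dot{\tilde w}(t)+\tilde B_2(t)\bigr)$, where $B_1(t)$ and $\tilde B_2(t)$ are the natural bracketed expressions read off from \eqref{totderv}. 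Because $\tilde\ell\equiv\ell$ and $\tilde w\equiv w$ on $[0,\bar t\,]$, finite speed of propagation gives $\tilde v\equiv v$ on $\Omega\cap\{t\le\bar t\,\}$, so $B_1(\bar t\,)$ and $\tilde B_2(\bar t\,)$ agree with the quantities appearing in the claimed formula.

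\textbf{Choice of the null set $N$.} I would take $N$ to be the union of the exceptional set produced by Proposition~\ref{energyderivative} applied with $(\tilde\ell,\tilde w)=(\ell,w)$, the set of $\bar t$ which are not $L^2$-Lebesgue points of $\dot v_0$ or $v_1$ when these are evaluated at the argument $\bar t$, and the preimage $\chi^{-1}(F)$, where $\chi(t):=\ell(t)-t$ and $F\subset(0,\ell_0)$ is the set of non-Lebesgue points of $\dot v_0-v_1\in L^2(0,\ell_0)$. The last set is negligible because $\chi$ is Lipschitz and by \eqref{ellemin} $\dot\chi=\dot\ell-1<0$ almost everywhere: the area formula gives $\int_{\chi^{-1}(F)}|\dot\chi|\,\mathrm d t\le|F|=0$, whence $|\chi^{-1}(F)|=0$. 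For $\bar t>\ell_0/2$ the same construction is iterated on each slab $[T_k,T_{k+1}]$ from the proof of Theorem~\ref{globalsolutionprescr}, with the role of $(v_0,v_1)$ played by $(v(T_k,\cdot),v_t(T_k,\cdot))$. All these pieces depend only on $\ell,u_0,u_1,w$, as required.

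\textbf{Passing to the limit.} The function $\tilde B_2$ is continuous at $\bar t\notin N$ thanks to the continuity properties of $v$ from Theorem~\ref{globalsolutionprescr} together with the Lebesgue-point inclusions built into $N$; splitting $I_2=\dot{\tilde w}^2+\dot{\tilde w}\,\tilde B_2$, the square term averages to $\beta^2$ by \eqref{betagammaleb} and the cross term to $\beta\tilde B_2(\bar t\,)$ by Cauchy--Schwarz combined with \eqref{betagammaleb}. For $I_1$, the bounded factor $\dot{\tilde\ell}(t)(1-\dot{\tilde\ell}(t))/(1+\dot{\tilde\ell}(t))$ converges to $\alpha(1-\alpha)/(1+\alpha)$ in $L^1$-average by \eqref{alphaleb}, and the integral piece in $B_1(t)$ is continuous in $t$ at $\bar t$, so the only nontrivial task is showing $\Phi(\tilde\ell(\cdot)-\cdot)\to\Phi(\chi(\bar t\,))$ in mean-square on $[\bar t,\bar t+h]$, with $\Phi:=\dot v_0-v_1\in L^2(0,\ell_0)$.

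\textbf{Main obstacle.} This last mean-square Lebesgue-point statement is the heart of the proof. My plan is to combine the bound $|\tilde\ell(t)-t-\chi(\bar t\,)|\le 2(t-\bar t)$ with the biLipschitz change of variable $s=\tilde\ell(t)-t$ on $[\bar t,\bar t+h]$: if $\alpha<1$, assumption \eqref{alphaleb} makes $1-\dot{\tilde\ell}(t)$ converge to $1-\alpha>0$ in $L^1$-average, so the Jacobian stays bounded away from zero on most of the slab and the substitution gives
$$\frac{1}{h}\int_{\bar t}^{\bar t+h}|\Phi(\tilde\ell(t)-t)-\Phi(\chi(\bar t\,))|^2\,\mathrm d t\;\le\;\frac{C}{(1-\alpha)h}\int_{\chi(\bar t\,)-2h}^{\chi(\bar t\,)}|\Phi(s)-\Phi(\chi(\bar t\,))|^2\,\mathrm d s,$$
which tends to zero because $\chi(\bar t\,)$ is, by construction of $N$, an $L^2$-Lebesgue point of $\Phi$. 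In the remaining case $\alpha=1$ the whole $I_1$ contribution vanishes in the limit thanks to the $(1-\alpha)$ prefactor, so no change of variable is needed. The strict inequality $\dot\ell<1$ from \eqref{ellemin} is precisely what makes both the area-formula argument in the construction of $N$ and the biLipschitz substitution in the limit work.
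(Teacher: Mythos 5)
Your overall architecture matches the paper's: write the increment as $\frac1h\int_{\bar t}^{\bar t+h}\dot{\mathcal T}$ using Proposition~\ref{energyderivative}, build $N$ from $L^2$-Lebesgue points of $\dot v_0\pm v_1$ (pulling back through $t\mapsto \ell(t)-t$ via the area formula, exactly as in Corollary~\ref{nullsets}, which is where \eqref{ellemin} enters), and pass to the limit in the average using \eqref{alphaleb}, \eqref{betagammaleb} and the continuity of the integral terms. The treatment of the $\dot{\tilde w}$-part is fine modulo wording ($\tilde B_2$ is not continuous at $\bar t$ — it contains $\dot v_0+v_1$ — but the Lebesgue-point property in $N$ does the job in the averaged sense, as in the paper's estimate of $J_2$).

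However, the step you yourself flag as the ``Main obstacle'' contains a genuine gap. You reduce the $\dot{\tilde\ell}$-part to the \emph{unweighted} statement $\frac1h\int_{\bar t}^{\bar t+h}|\Phi(\tilde\ell(t)-t)-\Phi(\chi(\bar t\,))|^2\d t\to0$ and propose to prove it by inverting the change of variables $s=\tilde\ell(t)-t$, claiming the Jacobian $1-\dot{\tilde\ell}$ is bounded away from zero ``on most of the slab''. Condition \eqref{alphaleb} gives only $L^1$-average convergence of $\dot{\tilde\ell}$ to $\alpha$, hence no essential lower bound on $1-\dot{\tilde\ell}$ near $\bar t$: the set $E_h$ where $1-\dot{\tilde\ell}$ is tiny has measure $o(h)$, but on $E_h$ the map $t\mapsto\tilde\ell(t)-t$ compresses an interval of $t$-measure comparable to $h/k$ onto an $s$-interval of much smaller measure, where the $L^2$ function $\Phi$ may concentrate; one can arrange data consistent with \eqref{ellemin}, \eqref{alphaleb} and with $\chi(\bar t\,)$ being an $L^2$-Lebesgue point of $\Phi$ for which $\frac1h\int_{E_h}|\Phi(\tilde\ell(t)-t)|^2\d t\not\to0$. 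So the unweighted statement is simply false in general, and the constant $C/(1-\alpha)$ in your displayed inequality cannot be justified. The fix — and this is what the paper does in the estimate of $\tilde J_1$ — is never to strip the factor $1-\dot{\tilde\ell}(s)$ off the integrand: since $\bigl|\dot{\tilde\ell}\tfrac{1-\dot{\tilde\ell}}{1+\dot{\tilde\ell}}\bigr|\le 1-\dot{\tilde\ell}$, one keeps this factor as the exact Jacobian of $s\mapsto s-\tilde\ell(s)$ and obtains, with no division and no lower bound,
\begin{equation*}
\frac1h\int_{\bar t}^{\bar t+h}(1-\dot{\tilde\ell}(s))\,\big|\Phi(\tilde\ell(s){-}s)-\Phi(\chi(\bar t\,))\big|^2\d s\;\le\;\frac1h\int_{\bar t-\ell(\bar t\,)}^{\bar t-\ell(\bar t\,)+h}\big|\Phi(-r)-\Phi(\chi(\bar t\,))\big|^2\d r\xrightarrow[h\to0^+]{}0 ,
\end{equation*}
the fluctuating prefactor being handled separately against the \emph{constant} $\Phi(\chi(\bar t\,))^2$ (not against $\Phi(\tilde\ell(s)-s)^2$). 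The same remark applies to your case $\alpha=1$: the prefactor $(1-\alpha)=0$ alone does not kill an average of an unbounded integrand; you again need the weighted substitution, which shows the image interval has length $o(h)$.
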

\begin{rmk}\label{alltimes2}
		One can obtain a similar formula for $\dot{\mathcal{T}}_r(\bar{t};\tilde{\ell},\tilde{w})$, valid for $\bar t\ge\frac{\ell_0}{2}$, reasoning as in Remark~\ref{alltimes}.
\end{rmk}
\begin{proof}[Proof of Theorem~\ref{DERR}]
	Let us define $T:=\ell_0/2$; we notice that by Remarks~\ref{alltimes} and \ref{alltimes2} it is enough to prove the Theorem in the time interval $[0,T]$.\par
	We call $\rho_1(r):=\dot{v}_0(r)-v_1(r)$ and $\rho_2(r):=\dot{v}_0(r)+v_1(r)$ and we consider the points $\bar{t}\in[0,T]$ with the following properties:
	\begin{itemize}
		\item[a)] $\displaystyle\lim\limits_{h\to0^+}\displaystyle\!\frac 1h \displaystyle\int_{\bar{t}{-}\ell(\bar t\,)}^{\bar{t}{-}\ell(\bar t\,)+h}\!\!\!\!\!\!\!\!\!\!\!\!\!\!\!\!|\left(\rho_1({-}r)\right)^2\!-\!\big(\rho_1(\ell(\bar t\,){-}\bar t\,)\big)^2|\d r\!=\!0\,$ and $\displaystyle\lim\limits_{h\to0^+}\!\displaystyle\frac 1h \displaystyle\int_{\bar{t}{-}\ell(\bar t\,)}^{\bar{t}{-}\ell(\bar t\,)+h}\!\!\!\!\!\!\!\!\!\!\!\!\!\!\!\!|\rho_1({-}r)\!-\!\rho_1(\ell(\bar t\,){-}\bar t\,)|\d r\!=\!0$;
		\item[b)] $\displaystyle\lim\limits_{h\to0^+}\displaystyle\frac 1h \displaystyle\int_{\bar{t}}^{\bar{t}+h}|\rho_2(r)-\rho_2(\bar t\,)|^2\d r=0.$
	\end{itemize}

We call $E_T$ the set of points satisfying a) and b). Since $\rho_1$ and $\rho_2$ belong to $L^2(0,\ell_0)$ and since $\ell$ satisfies \eqref{ellemin} the set $N_T:=[0,T]\setminus E_T$ has measure zero (see Corollary \ref{nullsets}). Let us fix $\bar{t}\in E_T$.\par
In the estimates below the symbol $C$ is used to denote a constant, which may change from line to line, that does not depend on $h$, although it can depend on $\bar t$.
For the sake of clarity we define $\displaystyle I_1(v,\ell)(t):=\frac{\nu^2}{4}\int_{0}^{t}v(\tau,\tau{-}t{+}\ell(t))\d\tau$ and $\displaystyle I_2(v)(t):=\frac{\nu^2}{4}\int_{0}^{t}v(\tau,t{-}\tau)\d\tau$, so that
\begin{align*}
&\left|\!\frac{\mathcal{T}(\bar{t}{+}{h};\tilde{\ell},\tilde{w}){-}\!\mathcal{T}(\bar{t};\tilde{\ell},\tilde{w})}{h}{+}\frac{\alpha}{2}\!\frac{1\!\!-\!\alpha}{1\!\!+\!\alpha}e^{\!{-}\nu\bar{t}}\Big[\!\rho_1(\ell(\bar t){-}\bar t){-}I_1(v,\ell)(\bar t)\!\Big]^2\!\!\!{-}
\beta\!\!\left[\!\beta{+}\frac{\nu}{2}w(\bar t){-}e^{{-}\frac{\nu\bar t}{2}}\!\Big(\!\rho_2(\bar t){+}I_2(v)(\bar t)\!\Big)\!\right]\!\right|\\
&\le\frac{1}{2h}\int_{\bar{t}}^{\bar{t}{+}h}\left|\dot{\tilde{\ell}}(s)\frac{1{-}\dot{\tilde{\ell}}(s)}{1{+}\dot{\tilde{\ell}}(s)}e^{\!{-}\nu s}\left[\rho_1(\tilde\ell(s){-}s){-}I_1(\tilde{v},\tilde{\ell})(s)\right]^2\!\!\!{-}\alpha\frac{1{-}\alpha}{1{+}\alpha}e^{{-}\nu\bar{t}}\Big[\rho_1(\ell(\bar t\,){-}\bar t\,){-}I_1(v,\ell)(\bar t\,)\Big]^2\right|\!\!\d s\\
&\,\,\,+\frac 1h\int_{\bar{t}}^{\bar{t}+{h}}\!\!\left|\dot{\tilde{w}}(s)\!\!\left[\dot{\tilde{w}}(s){+}\frac{\nu}{2}\tilde{w}(s){-}e^{\!{-}\frac{\nu s}{2}}\Big(\rho_2(s){+}I_2(\tilde{v})(s)\Big)\!\right]\!\!{-}\beta\!\left[\!\beta{+}\frac{\nu}{2}w(\bar t\,){-}e^{\!{-}\frac{\nu\bar t}{2}}\Big(\rho_2(\bar t\,){+}I_2(v)(\bar t\,)\Big)\!\right]\!\right|\!\!\d s .
\end{align*}
We denote by $J_1$ and $J_2$ the first and the second integral respectively and we estimate:
\begingroup
\allowdisplaybreaks
\begin{align*}
	J_1&\le e^{-\nu\bar{t}}\Big[\rho_1(\ell(\bar t\,){-}\bar t\,)-I_1(v,\ell)(\bar t\,)\Big]^2\frac{1}{2h}\int_{\bar{t}}^{\bar{t}+h}\left|\dot{\tilde{\ell}}(s)\frac{1-\dot{\tilde{\ell}}(s)}{1+\dot{\tilde{\ell}}(s)}-\alpha\frac{1-\alpha}{1+\alpha}\right|\d s\\	
	&\quad+\Big[\rho_1(\ell(\bar t\,){-}\bar t\,)-I_1(v,\ell)(\bar t\,)\Big]^2\frac{1}{2h}\int_{\bar{t}}^{\bar{t}+h}\dot{\tilde{\ell}}(s)\frac{1-\dot{\tilde{\ell}}(s)}{1+\dot{\tilde{\ell}}(s)}|e^{-\nu s}-e^{-\nu\bar{t}}|\d s\\
	&\quad+\frac{1}{2h}\int_{\bar{t}}^{\bar{t}+h}\dot{\tilde{\ell}}(s)\frac{1-\dot{\tilde{\ell}}(s)}{1+\dot{\tilde{\ell}}(s)}e^{-\nu s}\left|\left[\rho_1(\tilde\ell(s){-}s)-I_1(\tilde{v},\tilde{\ell})(s)\right]^2-\Big[\rho_1(\ell(\bar t\,){-}\bar t\,)-I_1(v,\ell)(\bar t\,)\Big]^2\right|\d s\\
	&\le\frac{C}{h}\int_{\bar{t}}^{\bar{t}+h}|\dot{\tilde{\ell}}(s)-\alpha|\d s+\frac{C}{h}\int_{\bar{t}}^{\bar{t}+h}|e^{-\nu s}-e^{-\nu\bar{t}}|\d s\\
	&\quad+\frac{1}{2h}\int_{\bar{t}}^{\bar{t}+h}(1-\dot{\tilde{\ell}}(s))\left|\left[\rho_1(\tilde\ell(s){-}s)-I_1(\tilde{v},\tilde{\ell})(s)\right]^2-\Big[\rho_1(\ell(\bar t\,){-}\bar t\,)-I_1(v,\ell)(\bar t\,)\Big]^2\right|\d s.
\end{align*}
\endgroup
The first two integrals vanish as $h\to 0^+$, so we only need to estimate the last integral, denoted by $\tilde{J}_1$:
\begingroup
\allowdisplaybreaks
\begin{align*}
	\tilde{J}_1&\le\frac{1}{2h}\!\int_{\bar{t}}^{\bar{t}+h}\!\!\!\!\!\!\!\!\!(1-\dot{\tilde{\ell}}(s))\!\left|\!\left(\rho_1(\tilde\ell(s){-}s)\right)^2\!\!\!{-}\big(\rho_1(\ell(\bar t\,){-}\bar t\,)\big)^2\right|\!\!\d s{+}\frac{1}{2h}\int_{\bar{t}}^{\bar{t}+h}\left|\left(I_1(\tilde{v},\tilde{\ell})(s)\right)^2\!\!\!{-}\big(I_1(v,\ell)(\bar t\,)\big)^2\right|\!\!\d s\\
	&\quad+\frac 1h \int_{\bar{t}}^{\bar{t}+h}(1-\dot{\tilde{\ell}}(s))\left|\rho_1(\tilde\ell(s){-}s)-\rho_1(\ell(\bar t\,){-}\bar t\,)\right||I_1(\tilde{v},\tilde{\ell})(s)|\d s\\
	&\quad +\frac{|\rho_1(\ell(\bar t\,){-}\bar t\,)|}{h} \int_{\bar{t}}^{\bar{t}+h}\left|I_1(\tilde{v},\tilde{\ell})(s)-I_1(v,\ell)(\bar t\,)\right|\d s\\
	&\le\frac{1}{2h}\int_{\bar{t}{-}\ell(\bar t\,)}^{\bar{t}{-}\ell(\bar t\,)+h}\left|\left(\rho_1({-}r)\right)^2\!{-}\big(\rho_1(\ell(\bar t\,){-}\bar t\,)\big)^2\right|\d r+\frac{1}{2h}\int_{\bar{t}}^{\bar{t}+h}\left|\left(I_1(\tilde{v},\tilde{\ell})(s)\right)^2\!\!{-}\big(I_1(v,\ell)(\bar t\,)\big)^2\right|\d s\\
	&\quad+\frac{C}{h}\int_{\bar{t}{-}\ell(\bar t\,)}^{\bar{t}{-}\ell(\bar t\,)+h}\left|\rho_1({-}r)-\rho_1(\ell(\bar t\,){-}\bar t\,)\right|\d r+\frac{C}{h} \int_{\bar{t}}^{\bar{t}+h}\left|I_1(\tilde{v},\tilde{\ell})(s)-I_1(v,\ell)(\bar t\,)\right|\d s.
\end{align*}
\endgroup
The first and the third integral tend to $0$ when $h\to0^+$ by assumption a), while the other two by the continuity of the function $I_1(\tilde{v},\tilde{\ell})$. Now we estimate $J_2$:
\begin{align*}
	J_2&\le\frac 1h\int_{\bar{t}}^{\bar{t}+h}\!\!\!\!\!\!\!\!|\dot{\tilde{w}}(s)|\left|\dot{\tilde{w}}(s){+}\frac{\nu}{2}\tilde{w}(s){-}e^{-\frac{\nu s}{2}}\Big(\rho_2(s)+I_2(\tilde{v})(s)\Big){-}\beta{-}\frac{\nu}{2}w(\bar t\,){+}e^{-\frac{\nu\bar{t}}{2}}\Big(\rho_2(\bar t\,)+I_2(v)(\bar t\,)\Big)\right|\d s\\
	&\quad+\left|\beta+\frac{\nu}{2}w(\bar t\,)-e^{-\frac{\nu\bar{t}}{2}}\Big(\rho_2(\bar t\,)+I_2(v)(\bar t\,)\Big)\right|\frac 1h\int_{\bar{t}}^{\bar{t}+h}|\dot{\tilde{w}}(s)-\beta|\d s\\
	&\le\frac 1h\int_{\bar{t}}^{\bar{t}+h}|\dot{\tilde{w}}(s)||\dot{\tilde{w}}(s)-\beta|\d s+\frac{\nu}{2h}\int_{\bar{t}}^{\bar{t}+h}|\dot{\tilde{w}}(s)||\tilde{w}(s)-w(\bar t\,)|\d s+\frac{C}{h}\int_{\bar{t}}^{\bar{t}+h}|\dot{\tilde{w}}(s)-\beta|\d s\\
	&\quad+\frac 1h\int_{\bar{t}}^{\bar{t}+h}|\dot{\tilde{w}}(s)|\left|e^{-\frac{\nu s}{2}}\Big(\rho_2(s)+I_2(\tilde{v})(s)\Big)-e^{-\frac{\nu\bar{t}}{2}}\Big(\rho_2(\bar t\,)+I_2(v)(\bar t\,)\Big)\right|\d s.
\end{align*}
The first three integrals tend to $0$ as $h\to 0^+$ since $\lim\limits_{h\to 0^+}\frac 1h\int_{\bar{t}}^{\bar{t}+h}|\dot{\tilde{w}}(s)-\beta|^2\d s=0$ and by the continuity of $\tilde{w}$, so we only need to estimate the last one, denoted by $\tilde{J}_2$:
\begin{align*}
	\tilde{J}_2&\le\frac 1h\int_{\bar{t}}^{\bar{t}+h}e^{-\frac{\nu s}{2}}|\dot{\tilde{w}}(s)||\rho_2(s)-\rho_2(\bar t\,)|\d s+\frac 1h\int_{\bar{t}}^{\bar{t}+h}e^{-\frac{\nu s}{2}}|\dot{\tilde{w}}(s)||I_2(\tilde{v})(s)-I_2(v)(\bar t\,)|\d s\\
	&\quad+|\rho_2(\bar t\,)+I_2(v)(\bar t\,)|\frac 1h\int_{\bar{t}}^{\bar{t}+h}|\dot{\tilde{w}}(s)||e^{-\frac{\nu s}{2}}-e^{-\frac{\nu\bar{t}}{2}}|\d s.
\end{align*}
Exploiting assumption b) and the continuity of $I_2(\tilde{v})$ we conclude.
\end{proof}
Thanks to Theorem~\ref{DERR} we can give the rigorous definition of dynamic energy release rate:
\begin{defi}[\textbf{Dynamic energy release rate}]\label{galfa}
	Fix $\nu\ge0$, $\ell_0>0$ and consider $u_0$, $u_1$ and $w$ satisfying \eqref{bdryregularity} and \eqref{compatibility0}. Assume that $\ell\colon[0,+\infty)\to[\ell_0,+\infty)$ satisfies \eqref{ellea} and \eqref{ellemin}.\par
	For a.e. $ \bar{t}\in[0,+\infty)$ and for every  $\alpha\in(0,1)$ the dynamic energy release rate corresponding to the velocity $\alpha$ of the debonding front is defined as
	\begin{equation*}
		G_\alpha(\bar t\,) := -\frac{1}{\alpha}\dot{\mathcal{T}}_r(\bar{t};\tilde{\ell},\bar{w}),
	\end{equation*}
	where $\tilde{\ell}$ is an arbitrary Lipschitz extension of $\ell_{|_{[0,\bar{t}\,]}}$ satisfying \eqref{ellemin} and \eqref{alphaleb}, while 
	\begin{equation*}
		\bar{w}(t)=\begin{cases}
		w(t)&\mbox{if }t\in[0,\bar{t}\,],\\
		w(\bar t\,)&\mbox{if }t\in(\bar{t},+\infty).
		\end{cases}
	\end{equation*}
\end{defi}\noindent
By Theorem~\ref{DERR} for a.e. $\bar{t}\in\left[0,\frac{\ell_0}{2}\right]$ we get
\begin{equation}\label{derr}
	G_\alpha(\bar t\,) = \frac{1}{2}\frac{1-\alpha}{1+\alpha} e^{-\nu\bar{t}}\left[\dot{v}_0(\ell(\bar t\,){-}\bar t\,)-v_1(\ell(\bar t\,){-}\bar t\,)-\frac{\nu^2}{4}\int_{0}^{\bar{t}}v(\tau,\tau{-}\bar{t}{+}\ell(\bar t\,))\d\tau\right]^2,
\end{equation}
and a similar formula holds true for a.e. $\bar t\ge \frac{\ell_0}{2}$ by Remarks~\ref{alltimes} and \ref{alltimes2}. In the case $\nu=0$ we have the expression
\begin{equation}\label{previous}
	G_\alpha(\bar t\,) = 2\frac{1-\alpha}{1+\alpha} \left[\frac{\dot{u}_0(\ell(\bar t\,){-}\bar t\,)-u_1(\ell(\bar t\,){-}\bar t\,)}{2}\right]^2,\quad\quad\mbox{for a.e. }\bar{t}\in\left[0,\frac{\ell_0}{2}\right],
\end{equation} 
and hence we recover the formula given in~\cite{DMLazNar16}.\par 
We then extend the dynamic energy release rate to the case $\alpha=0$ by continuity, so that
\begin{equation*}
	G_\alpha(\bar t\,)=\frac{1-\alpha}{1+\alpha}G_0(\bar t\,),\quad\quad \text{ for a.e. }\bar{t}\in[0,+\infty).
\end{equation*}
In particular by \eqref{derr} we know that for a.e. $\bar t\in\left[0,\frac{\ell_0}{2}\right]$ we can write
\begin{equation}\label{G0}
	G_0(\bar t\,) = \frac{1}{2}e^{-\nu\bar{t}}\left[\dot{v}_0(\ell(\bar t\,){-}\bar t\,)-v_1(\ell(\bar t\,){-}\bar t\,)-\frac{\nu^2}{4}\int_{0}^{\bar{t}}v(\tau,\tau{-}\bar{t}{+}\ell(\bar t\,))\d\tau\right]^2.
\end{equation}
We want to highlight that in the damped case $\nu>0$ the dynamic energy release rate depends directly on $v$ and $\ell$, see \eqref{derr}, while in the undamped one it depends only on the debonding front $\ell$ (at least for small times), see \eqref{previous}. This is the main reason why the arguments used in \cite{DMLazNar16} become useless if viscosity is taken into account and new ideas have to be developed.
\subsection{Griffith's criterion}
To introduce the criterion which controls the evolution of the debonding front $\ell$ we need to consider the notion of local toughness of the glue between the substrate and the tape. It is a measurable function $\kappa \colon [\ell_0,+\infty) \to (0,+\infty)$  which rules the amount of energy dissipated during the debonding process in the time interval $[0,t]$ via the formula
\begin{equation}\label{enkappa}
	\int_{\ell_0}^{\ell(t)} \kappa(x) \d x.
\end{equation}
As in~\cite{DMLazNar16} and~\cite{Lar10} we postulate that our model is governed by an energy-dissipation balance and a maximum dissipation principle; this last one states that the debonding front has to move with the maximum speed allowed by the energy balance. More precisely we assume: 
	\begin{equation}\label{enbal}
	\mc{T}(t)+\int_{\ell_0}^{\ell(t)}\kappa(x)\d x = \mc{T}(0) +\mathcal{W}(t),\quad\quad\mbox{ for every }t\in[0,+\infty),
	\end{equation} 
	\begin{equation}\label{dissipprinc}
	\dot \ell(t) = \max\{\alpha\in[0,1) \mid \kappa(\ell(t))\alpha = G_\alpha(t) \alpha\},\quad\quad\text{for a.e. } t\in[0,+\infty),
	\end{equation}
where $\mathcal{W}$ is the work of the external loading and it has the form (see also Remark~\ref{alltimes}):
\begin{equation*}
\mathcal{W}(t):=\int_{0}^{t}\!\dot w(s)\!\left[\dot w(s){+}\frac{\nu}{2}w(s){-}e^{-\frac{\nu s}{2}}\!\left(\!\dot v_0(s){+}v_1(s){+}\frac{\nu^2}{4}\!\int_{0}^{s}\!v(\tau,s{-}\tau)\d\tau\!\right)\!\right]\!\!\d s,\,\,\text{ for }t\in\left[0,\frac{\ell_0}{2}\right].
\end{equation*} 
By Proposition~\ref{energyderivative}, Theorem~\ref{DERR} and Lemma~\ref{areaformula} we deduce that \eqref{enbal} is equivalent to
\begin{equation*}
	\kappa(\ell(t)) \dot \ell(t) = G_{\dot \ell(t)}(t) \dot \ell(t), \quad \text{for a.e.}\ t\in[0,+\infty),
\end{equation*}
and we observe that for a.e. $t\in[0,+\infty)$ the set $\{\alpha\in[0,1) \mid \kappa(\ell(t))\alpha = G_\alpha(t) \alpha\}$ has at most one element different from zero by the strict monotonicity of $\alpha \mapsto G_\alpha(t)$ and since $\kappa(x)>0$ for every $x\ge\ell_0$. Therefore the maximum dissipation principle \eqref{dissipprinc} simply states that during the evolution of the debonding front $\ell$ only two phases can occur: if the toughness $\kappa$ is strong enough, $\ell$ stops and does not move till the dynamic energy release rate equals $\kappa$, otherwise it moves at the only speed which is consistent with the energy-dissipation balance \eqref{enbal}.

Arguing as in~\cite{DMLazNar16} we get that \eqref{enbal}\&\eqref{dissipprinc} are equivalent to the following system, called Griffith's criterion in analogy to the corresponding criterion in Fracture Mechanics:  
\begin{equation}\label{Griffithcrit}
	\begin{cases}
	\,\,0\le\dot{\ell}(t)<1,\\ 
	\,\,G_{\dot\ell(t)}(t)\le \kappa(\ell(t)),\\ 
	\left[ G_{\dot\ell(t)}(t)-\kappa(\ell(t))\right]\dot{\ell}(t)=0, 
	\end{cases}\quad\quad\quad\quad \text{for a.e. } t\in[0,+\infty).
\end{equation}

Finally one can prove (see~\cite{DMLazNar16} for more details) that Griffith's criterion \eqref{Griffithcrit} is equivalent to the following ordinary differential equation:
\begin{equation}\label{equation}
\dot{\ell}(t)=\max\left\{\frac{G_0(t)-\kappa(\ell(t))}{G_0(t)+\kappa(\ell(t))},0\right\},\quad\quad\quad\quad \text{for a.e. } t\in[0,+\infty),
\end{equation}
which, for a.e. $t\in\left[0,\frac{\ell_0}{2}\right]$, can be rewritten as
\begin{equation}
	\label{ellepunto}
	\dot{\ell}(t)=\max\left\{\frac{\left[\dot{v}_0(\ell(t){-}t)-v_1(\ell(t){-}t)-\displaystyle\frac{\nu^2}{4}\int_{0}^{t}v(\tau,\tau{-}t{+}\ell(t))\d\tau\right]^2-2e^{\nu t}\kappa(\ell(t))}{\left[\dot{v}_0(\ell(t){-}t)-v_1(\ell(t){-}t)-\displaystyle\frac{\nu^2}{4}\int_{0}^{t}v(\tau,\tau{-}t{+}\ell(t))\d\tau\right]^2+2e^{\nu t}\kappa(\ell(t))},0\right\}.
\end{equation}
We want to underline again that, differently from \cite{DMLazNar16}, the equation for the debonding front \eqref{ellepunto} depends also on $v$ (and thus on $u$) if $\nu>0$. This will bring the main technical difficulties of the next Section.
\section{Evolution of the debonding front}	\label{sec4}
In this Section we couple problem \eqref{problem1} with the energy-dissipation balance \eqref{enbal} and the maximum dissipation principle \eqref{dissipprinc} and we prove existence of a unique pair $(u,\ell)$ which solves this coupled problem.\par
We fix $\nu\ge 0$, $\ell_0>0$ and we consider $u_0$, $u_1$ and $w$ satisfying \eqref{bdryregularity} and \eqref{compatibility0}, and a measurable function  $\kappa\colon[\ell_0,+\infty)\to(0,+\infty)$.\par
Since differently from previous Sections the debonding front $\ell$ is unknown, from now on we will always stress the dependence on $\ell$ and we shall write $A_\ell$, $R_\ell$ and $\Omega_\ell$ instead of $A$, $R$ and $\Omega$, and so on. We shall also write $(G_0)_{v,\ell}$ instead of $G_0$, since by \eqref{G0} the dependence of the dynamic energy release rate both on the debonding front $\ell$ and on the solution $v$ of \eqref{problem2} is evident. Moreover, as in Lemmas~\ref{A} and \ref{derivatives}, we shall extend the functions $A_\ell$ and $\displaystyle\iint_{R_\ell(\cdot,\cdot)}v\d\sigma\d\tau$ setting them to be equal $0$ outside $\overline{\Omega_\ell}$.
\begin{defi}\label{coupleddef}
	Assume $\ell\colon[0,+\infty)\to[\ell_0,+\infty)$ satisfies \eqref{ellea} and \eqref{ellemin}; let $u\colon[0,+\infty)^2\to\erre$ be such that $u\in\widetilde{H}^1(\Omega_\ell)$ (resp. in $H^1((\Omega_\ell)_T)$). We say that the pair $(u,\ell)$ is a solution of the coupled problem (resp. in $[0,T]$) if:
	\begin{itemize}
		\item[i)] $u$ solves problem \eqref{problem1} in $\Omega_\ell$ (resp. in $(\Omega_\ell)_T)$ in the sense of Definition \ref{sol},
		\item[ii)] $u\equiv 0$ outside $\overline{\Omega_\ell}$ (resp. in $([0,T]\!\times\![0,+\infty))\setminus\overline{(\Omega_\ell)_T}$),
		\item[iii)]  $(u,\ell)$ satisfies Griffith's criterion \eqref{Griffithcrit} for a.e. $t\in[0,+\infty)$ (resp. for a.e. $t\in[0,T]$).
	\end{itemize}	
\end{defi} 
	Using \eqref{problem2} and \eqref{equation} it turns out that the pair $(u,\ell)$ is a solution of the coupled problem if and only if $(v,\ell)$, where $v(t,x)=e^{\nu t/2}u(t,x)$, satisfies the following system:
	\begin{equation}\label{coupledv}
	\begin{cases}
	v_{tt}(t,x)-v_{xx}(t,x)-\frac{\nu^2}{4} v(t,x)=0, &t>0\,,\,0<x<\ell(t),\\
	\dot{\ell}(t)=\displaystyle\max\left\{\frac{(G_0)_{v,\ell}(t)-\kappa(\ell(t))}{(G_0)_{v,\ell}(t)+\kappa(\ell(t))},0\right\}, &t>0,\\
	v(t,x)=0, &t>0\,,\,x>\ell(t),\\
	v(t,0)=z(t), &t>0,\\
	v(t,\ell(t))=0,&t>0,\\
	v(0,x)=v_0(x),&0<x<\ell_0,\\
	v_t(0,x)=v_1(x),&0<x<\ell_0,\\
	\ell(0)=\ell_0.
	\end{cases}		
	\end{equation}
	Similarly to Section~\ref{sec1} we write the fixed point problem related to \eqref{coupledv}. Since representation formula \eqref{Duhamel} holds true only in $\Omega'_\ell$, we fix $T\in\left(0,\frac{\ell_0}{2}\right)$ and we state the problem in $(\Omega_\ell)_T$:
	\begin{equation*}
	\begin{cases}
	v(t,x)=\left(A_\ell(t,x)+\displaystyle\frac{\nu^2}{8}\displaystyle\iint_{R_\ell(t,x)}\!\!\!\!\!\!\!\!\!\!v(\tau,\sigma)\d\sigma \d\tau\right)\chi_{(\Omega_\ell)_T}(t,x),&\text{for a.e. }(t,x)\in(0,T)\!\!\times\!\!(0,+\infty),\\
	\ell(t)=\ell_0+\displaystyle\int_{0}^{t}\max\left\{\frac{(G_0)_{v,\ell}(s)-\kappa(\ell(s))}{(G_0)_{v,\ell}(s)+\kappa(\ell(s))},0\right\}\d s,&\text{for every }t\in[0,T],
	\end{cases}		
	\end{equation*}
	where, given a set $E$, we denoted by $\chi_E$ the indicator function of $E$.\par
	For a reason that will be clear later we prefer to introduce the auxiliary function $\lambda$, defined as the inverse of the map $t\mapsto t{-}\ell(t)$ (see also~\cite{DMLazNar16}, Theorem~3.5). We notice that $\lambda$ is absolutely continuous by \eqref{ellemin} and Corollary \ref{absinv}, while in the simpler case in which there exists $\delta_T\in(0,1)$ such that $0\le \dot\ell(t)\le1-\delta_T$ for a.e. $t\in[0,T]$, $\lambda$ is Lipschitz and $1\le\dot\lambda(y)\le\frac{1}{\delta_T}$ for a.e. $y\in[-\ell_0,\lambda^{{-}1}(T)]$. We then consider the equivalent fixed point problem for the pair $(v,\lambda)$; exploiting \eqref{ellepunto} it takes the form:
	\begin{equation}\label{fixedv}
	\begin{cases}
	v(t,x)\!=\!\left(\!\!A_{\ell_\lambda}(t,x)+\displaystyle\frac{\nu^2}{8}\!\!\displaystyle\iint_{R_{\ell_\lambda}(t,x)}\!\!\!\!\!\!\!\!\!\!\!\!\!\!\!\!v(\tau,\sigma)\d\sigma \d\tau\!\right)\!\chi_{(\Omega_{\ell_\lambda})_T}(t,x),\!\!\!\!&\text{for a.e. }(t,x)\in(0,T)\!\!\times\!\!(0,+\infty),\\
	\lambda(y)=\displaystyle\frac 12\int_{{-}\ell_0}^{y}\Big(1+\max\left\{\Lambda_{v,\lambda}(s),1\right\}\Big)\d s,&\text{for every }y\in[-\ell_0,\lambda^{{-}1}(T)],
	\end{cases}		
	\end{equation}
	where we define for a.e. $y\in[-\ell_0,\lambda^{{-}1}(T)]$
	\begin{equation}\label{Lambda}
		\Lambda_{v,\lambda}(y):=\frac{\left[\dot{v}_0({-}y)-v_1({-}y)-\displaystyle\frac{\nu^2}{4}\int_{0}^{\lambda(y)}v(\tau,\tau{-}y)\d\tau\right]^2}{\displaystyle 2e^{\nu\lambda(y)}\kappa(\lambda(y){-}y)},
	\end{equation}
	and where we denoted by $\ell_\lambda$ simply the function $\ell$, stressing the fact that it depends on $\lambda$ via the formula $\ell_\lambda(t)=t-\lambda^{-1}(t)$.\par 
	As in Section~\ref{sec1}, we solve problem \eqref{fixedv} showing that a suitable operator is a contraction. We argue as follows: for $T>0$ and $Y\in(0,\ell_0)$ we consider the sets (see Figure~\ref{FigQ})
	\begin{gather*}
		Q=Q(T,Y):=\left\{(t,x)\mid 0\le t\le T,\,\ell_0{-}Y+t\le x\le \ell_0+t\right\},\\
		Q_{\ell_\lambda}:=Q\cap \overline{\Omega_{\ell_\lambda}}.
	\end{gather*}
	\begin{figure}
		\centering
		\includegraphics[scale=.8]{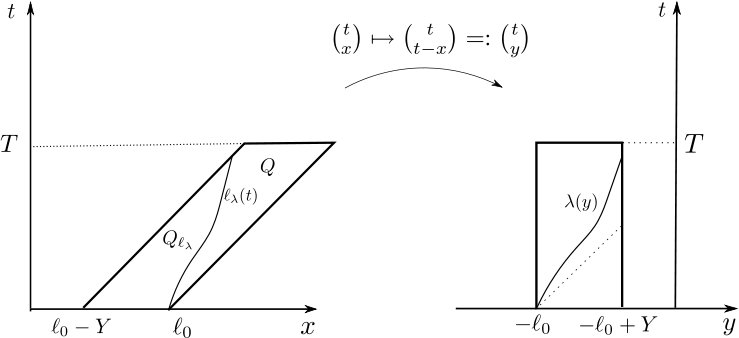}
		\caption{The set $Q$ and the functions $\lambda$ and $\ell_\lambda$.}\label{FigQ}
	\end{figure}\noindent	
	Moreover for $M>0$ and denoting by $I_Y$ the closed interval $[{-}\ell_0,{-}\ell_0{+}Y]$ we introduce the spaces 
	\begin{gather*}
		\mc X_1=\mc X_1(T,Y,M):=\left\{v\in C^0(Q)\mid \Vert v\Vert_{C^0(Q)}\le M\right\},\\
		\mc B_2=\mc B_2(T,Y):=\left\{\lambda\in C^0(I_Y)\mid \lambda(-\ell_0)=0,\, \Vert\lambda\Vert_{C^0(I_Y)}\,\le T,\,y\mapsto\lambda(y){-}y\text{ is nondecreasing}\right\}.
	\end{gather*}		
	Let us define $\mc X:=\mc X_1\times \mc B_2$ and consider the operators:
	\begin{subequations}
		\begin{equation*}
			\Psi_1(v,\lambda)(t,x):=\left(A_{\ell_\lambda}(t,x)+\frac{\nu^2}{8} \iint_{R_{\ell_\lambda}(t,x)}v(\tau,\sigma)\d\sigma\d\tau\right)\chi_{Q_{\ell_\lambda}}(t,x),
		\end{equation*}
		\begin{equation*}
		\Psi_2(v,\lambda)(y):=\frac 12\int_{-\ell_0}^{y}\!\left(\!1+\max\left\{\frac{\left[\dot{v}_0({-}s)-v_1({-}s)-\frac{\nu^2}{4}\int_{0}^{\lambda(s)}v(\tau,\tau{-}s)\d\tau\right]^2}{2e^{\nu\lambda(s)}\kappa(\lambda(s){-}s)},1\right\}\!\right)\!\d s.
		\end{equation*}
	\end{subequations}
We then define
\begin{equation}\label{Psi}
\Psi(v,\lambda):=\big(\Psi_1(v,\lambda),\Psi_2(v,\lambda)\big).
\end{equation}
\begin{rmk}
	From now on we shall write $\ell$, $\psi$ and $\omega$ instead of $\ell_\lambda$, $\psi_\lambda$ and $\omega_\lambda$, being tacit the dependence on $\lambda$.
\end{rmk} \noindent
For convenience, we assume for the moment that there exist two positive constants $c_1$ and $c_2$ such that
\begin{equation}\label{kappa}
0<c_1\le\kappa(x)\le c_2 \quad\mbox{for every } x\ge\ell_0.
\end{equation}
\begin{lemma}\label{itself}
	Fix $\nu\ge0$, $\ell_0>0$ and consider $v_0$, $v_1$ satisfying \eqref{bdry2} and $v_0(\ell_0)=0$. Assume that the measurable function $\kappa\colon[\ell_0,+\infty)\to(0,+\infty)$ satisfies \eqref{kappa}.\par
	Then for every $T>0$ and $M>0$ there exists $Y\in(0,\ell_0)$ such that the operator $\Psi$ in \eqref{Psi} maps $\mc X$ into itself.
\end{lemma}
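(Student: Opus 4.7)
The plan is to pick $Y$ so small that $\Psi_1(v,\lambda)$ stays bounded by $M$ in $C^0(Q)$ and $\Psi_2(v,\lambda)$ stays bounded by $T$ in $C^0(I_Y)$; the other structural conditions defining $\mc X$ will hold automatically. The geometric key is to show that for every $(t,x)\in Q_{\ell_\lambda}$ the past set $R_{\ell_\lambda}(t,x)$ is contained in $Q$. On the left boundary one has $\gamma_1(\tau;t,x)-\tau=x-t\in[\ell_0-Y,\ell_0]$ by definition of $Q$, while a short case analysis of \eqref{bordi}, exploiting the inequality $\ell(s)\le\ell_0+s$ (which follows from \eqref{ellemin} and $\ell(0)=\ell_0$), gives $\gamma_2(\tau;t,x)-\tau\le\ell_0$. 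Thus $\sigma-\tau\in[\ell_0-Y,\ell_0]$ throughout $R_{\ell_\lambda}(t,x)$, and in particular the integral $\iint_{R_{\ell_\lambda}(t,x)}v\,\d\sigma\d\tau$ is well defined for every $v\in\mc X_1$.

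With this containment $\Psi_1(v,\lambda)\in\mc X_1$ reduces to two checks. Continuity on $\overline{Q_{\ell_\lambda}}$ is given by Lemmas~\ref{A} and \ref{derivatives}, while continuity across the free boundary $x=\ell(t)$ is automatic, since $A_{\ell_\lambda}(t,\ell(t))=0$ and $R_{\ell_\lambda}(t,\ell(t))$ collapses (one checks $\gamma_1\equiv\gamma_2$ there). For the norm bound, $A_{\ell_\lambda}(t,\ell(t))=0$ together with $\ell(t)-x\le Y$ on $Q_{\ell_\lambda}$ yield $|A_{\ell_\lambda}(t,x)|\le\sqrt{Y}\,\|A_x(t,\cdot)\|_{L^2(0,+\infty)}$, where the $L^2$-norm can be bounded uniformly in $t$ and in $\lambda$ in terms of $\|\dot v_0\|_{L^2}$ and $\|v_1\|_{L^2}$, thanks to $|\dot\omega|\le 1$ and the change of variables $r=-\omega(s)$; combined with $|\iint_R v|\le|Q|\cdot M=YTM$, this gives $\|\Psi_1(v,\lambda)\|_{C^0(Q)}\le C\sqrt{Y}+\frac{\nu^2}{8}YTM$, which is $\le M$ for $Y$ small.

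For $\Psi_2(v,\lambda)\in\mc B_2$, the value at $-\ell_0$ is zero by construction, the bound $\dot\Psi_2\ge 1$ forces $y\mapsto\Psi_2(y)-y$ to be nondecreasing, and continuity on $I_Y$ follows from absolute continuity of the integral since the integrand lies in $L^1(I_Y)$ by \eqref{bdry2} and \eqref{kappa}. The only genuine bound is $\|\Psi_2\|_{C^0(I_Y)}\le T$: expanding the square in \eqref{Lambda} and using \eqref{kappa}, $\lambda\le T$, and $|v|\le M$ gives an estimate for $\Psi_2(v,\lambda)(-\ell_0+Y)$ of the form $Y+\frac{C}{c_1}\bigl(\int_{\ell_0-Y}^{\ell_0}(\dot v_0^2+v_1^2)\,\d r+Y\nu^4 T^2 M^2\bigr)$, which tends to $0$ as $Y\to 0$. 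The main obstacle is the geometric step $R_{\ell_\lambda}(t,x)\subset Q$; once it is in hand, the two norm bounds dictate how small $Y$ must be chosen in terms of $T,M,c_1,\nu$ and the data.
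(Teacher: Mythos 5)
Your proposal is correct and follows essentially the same strategy as the paper: reduce membership in $\mc X$ to the two norm bounds $\Vert\Psi_1(v,\lambda)\Vert_{C^0(Q)}\le M$ and $\Psi_2(v,\lambda)(-\ell_0+Y)\le T$, and make both right-hand sides vanish as $Y\to0^+$ using the smallness of $|Q|=TY$ and the absolute continuity of $\int_{\ell_0-Y}^{\ell_0}(|\dot v_0|^2+|v_1|^2)$. The only cosmetic differences are that you bound $|A_{\ell_\lambda}|$ by $C\sqrt{Y}$ via Cauchy--Schwarz rather than by $\int_{\ell_0-Y}^{\ell_0}(|\dot v_0|+|v_1|)\d s$ from the explicit formula \eqref{homsol}, and that you spell out the containment $R_{\ell_\lambda}(t,x)\subset Q$, which the paper uses implicitly in the estimate $\big|\iint_{R_{\ell_\lambda}(t,x)}v\big|\le M|Q|$.
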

\begin{proof} 
	Fix $T>0$, $M>0$ and let $(v,\lambda)\in \mc X$; by Lemmas~\ref{A} and \ref{derivatives} we deduce that $\Psi_1(v,\lambda)$ is continuous on $Q$ (indeed notice that $\ell=\ell_\lambda$ satisfies \eqref{elle}), while by construction $\Psi_2(v,\lambda)$ is actually absolutely continuous on $I_Y$ and satisfies $\Psi_2(v,\lambda)(-\ell_0)=0$ and $\frac{\d}{\d y}\Psi_2(v,\lambda)(y)\ge 1$ for a.e. $y\in I_Y$. Hence to conclude it is enough to find $Y\in(0,\ell_0)$ such that 
	\begin{equation*}
		\Vert\Psi_1(v,\lambda)\Vert_{C^0(Q)}\le M\quad\text{ and }\quad \Psi_2(v,\lambda)({-}\ell_0{+}Y)\le T.
	\end{equation*}
	We pick $(t,x)\in Q_\ell$ and using \eqref{homsol} we estimate:
	\begin{align*}
		|\Psi_1(v,\lambda)(t,x)|&\le |A_\ell(t,x)|+\frac{\nu^2}{8}\left|\iint_{R_{\ell}(t,x)}v(\tau,\sigma)\d\sigma\d\tau\right|\\
		&\le\int_{\ell_0{-}Y}^{\ell_0}\!\!\!\!\!\!\!\!\!(|\dot{v}_0(s)|+|v_1(s)|)\d s+\frac{\nu^2}{8}M|Q|\\
		&=\int_{\ell_0{-}Y}^{\ell_0}\!\!\!\!\!\!\!\!\!(|\dot{v}_0(s)|+|v_1(s)|)\d s+\frac{\nu^2}{8}MTY.
	\end{align*}
	As regards $\Psi_2(v,\lambda)({-}\ell_0{+}Y)$ we argue as follows:
	\begin{align*}
		\Psi_2(v,\lambda)({-}\ell_0{+}Y)&=\frac 12 \int_{-\ell_0}^{{-}\ell_0{+}Y}\big(1+\max\{\Lambda_{v,\lambda}(s),1\}\big)\d s\le\frac 12 \int_{-\ell_0}^{{-}\ell_0{+}Y}\big(2+\Lambda_{v,\lambda}(s)\big)\d s\\
		&\le Y+\frac{1}{2c_1}\left[\int_{-\ell_0}^{{-}\ell_0{+}Y}\!\!\!\!\!\!\!\!\!\!\!\![\dot{v}_0({-}s)-v_1({-}s)]^2\d s+\frac{\nu^4}{16}\int_{-\ell_0}^{{-}\ell_0{+}Y}\!\!\!\left(\int_{0}^{\lambda(s)}\!\!\!\!\!\!v(\tau,\tau{-}s)\d\tau\right)^2\!\!\!\d s\right]\\
		&\le Y+\frac{1}{2c_1}\int_{\ell_0{-}Y}^{\ell_0}\!\!\!\!\!\![\dot{v}_0(s)-v_1(s)]^2\d s+\frac{\nu^4}{32c_1}M^2T^2Y.		
	\end{align*}
	Since in both estimates the last line tends to $0$ when $Y\to0^+$ we can conclude.
\end{proof}
\begin{lemma}\label{equicontinuity}
	Fix $\nu\ge0$, $\ell_0>0$ and consider $v_0$, $v_1$ satisfying \eqref{bdry2} and $v_0(\ell_0)=0$. Fix $T>0$, $M>0$ and let $Y\in(0,\ell_0)$ be given by Lemma~\ref{itself}.\par 
	Then $\Psi_1(\mc X)$ is an equicontinuous family of $\mc X_1$.
\end{lemma}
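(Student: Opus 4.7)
The plan is to establish a uniform modulus of continuity for the family $\{\Psi_1(v,\lambda)\}_{(v,\lambda)\in\mc X}$ on $Q$. I would write $\Psi_1(v,\lambda)=(A_{\ell_\lambda}+B_{v,\lambda})\chi_{Q_{\ell_\lambda}}$ with $B_{v,\lambda}(t,x):=\frac{\nu^2}{8}\iint_{R_{\ell_\lambda}(t,x)}v\,\d\sigma\,\d\tau$, and prove uniform moduli of continuity for $A_{\ell_\lambda}$ and $B_{v,\lambda}$ on $Q_{\ell_\lambda}$ separately. First I would observe that for $(t,x)\in Q$ the set $R_{\ell_\lambda}(t,x)$ is contained in $Q_{\ell_\lambda}$: the inequality $\gamma_1(\tau;t,x)=x-t+\tau\ge\ell_0-Y+\tau$ holds by definition of $Q$, and $\gamma_2(\tau;t,x)\le\ell_\lambda(\tau)$ follows from \eqref{bordi} together with the $1$-Lipschitz property of $\ell_\lambda$ (a consequence of \eqref{ellemin}), so the integral defining $B_{v,\lambda}$ is well-posed. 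A second key observation is that both $A_{\ell_\lambda}$ and $B_{v,\lambda}$ vanish on the boundary curve $\{x=\ell_\lambda(t)\}$: for $A$ this is built into \eqref{homsol} via the compatibility conditions; for $B$ one checks using \eqref{bordi} and $\psi^{-1}(t+\ell_\lambda(t))=t$ that $\gamma_1(\tau;t,\ell_\lambda(t))=\gamma_2(\tau;t,\ell_\lambda(t))$, so $R_{\ell_\lambda}(t,\ell_\lambda(t))$ has zero measure.

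For the uniform modulus of continuity of $B_{v,\lambda}$ I would exploit $\Vert v\Vert_{C^0(Q)}\le M$ to bound $|B_{v,\lambda}(t_1,x_1)-B_{v,\lambda}(t_2,x_2)|$ by $\frac{\nu^2 M}{8}|R_{\ell_\lambda}(t_1,x_1)\triangle R_{\ell_\lambda}(t_2,x_2)|$, and then invoke \eqref{bordi} together with the uniform bound $|\dot\omega_\lambda|\le 1$ from \eqref{deromega} to deduce that $\gamma_1,\gamma_2$ are Lipschitz in $(t,x)$ with constants independent of $\lambda$, whence $|R_{\ell_\lambda}(t_1,x_1)\triangle R_{\ell_\lambda}(t_2,x_2)|\le C_T(|t_1-t_2|+|x_1-x_2|)$. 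For the uniform modulus of continuity of $A_{\ell_\lambda}$, note that on $Q$ we have $t-x\in[-\ell_0,0]$ and $t+x\in[\ell_0-Y,\ell_0+2T]$, so the term $a_2(t-x)$ from \eqref{a1a2} is independent of $\lambda$ and uniformly continuous on $[-\ell_0,0]$ (being in $H^1$), while $a_1$ is given on $[\ell_0-Y,\ell_0]$ by a $\lambda$-independent $H^1$ formula. On $[\ell_0,\ell_0+2T]$ one can write $a_1(s)=F(-\omega_\lambda(s))$ for a fixed continuous function $F\in H^1(0,\ell_0)$ depending only on $v_0$ and $v_1$; since $-\omega_\lambda$ is $1$-Lipschitz uniformly in $\lambda$, the composition inherits the modulus of continuity of $F$ independently of $\lambda$, and the two branches match at $s=\ell_0$ thanks to the compatibility $v_0(\ell_0)=0$.

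Finally, given $\varepsilon>0$ I would choose $\delta>0$ independent of $(v,\lambda)$ so that the combined modulus of continuity of $A_{\ell_\lambda}+B_{v,\lambda}$ stays below $\varepsilon$ on distances less than $\delta$. For two points $(t_1,x_1),(t_2,x_2)\in Q$ at distance less than $\delta$, the case when both lie in $Q_{\ell_\lambda}$ (resp.\ in its complement) is immediate (resp.\ both values of $\Psi_1$ are zero). In the mixed case, the convexity of $Q$ and the intermediate value theorem applied to $s\mapsto x_s-\ell_\lambda(t_s)$ along the segment produce a point $(t',x')\in Q$ with $x'=\ell_\lambda(t')$ at distance at most $\delta$ from each endpoint; since $\Psi_1(v,\lambda)(t',x')=0$ by the first paragraph, the triangle inequality reduces the estimate to the previous cases. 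The main technical hurdle throughout is the uniformity in $\lambda$, which is secured by the single bound $|\dot\omega_\lambda|\le 1$ from \eqref{deromega}.
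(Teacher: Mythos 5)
Your proof is correct and follows essentially the same route as the paper's: the same splitting of $\Psi_1$ into the d'Alembert part $A_{\ell_\lambda}$ and the Duhamel integral, the same bound $\frac{\nu^2M}{8}|R_{\ell_\lambda}(t_1,x_1)\triangle R_{\ell_\lambda}(t_2,x_2)|\le C(|t_1-t_2|+|x_1-x_2|)$ uniform in $\lambda$ via the $1$-Lipschitz bound on $\omega_\lambda$ from \eqref{deromega}, and the same use of the uniform continuity of the $H^1$ data (through $a_1$, $a_2$ and the matching of branches at $\ell_0$ forced by $v_0(\ell_0)=0$) for the $A$-part. Your explicit treatment of the mixed case via the intermediate value theorem and the vanishing of both terms on the debonding front is, if anything, slightly more careful than the paper, which simply asserts that the case of both points lying in $Q_{\ell_\lambda}$ suffices.
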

\begin{proof}
	Let $(v,\lambda)\in \mc X$ and fix $\varepsilon>0$.\par 
	By simple geometric considerations and by continuity we deduce that
	\begin{itemize}
		\item[1)] $|R_\ell(t_1,x_1)\triangle R_\ell(t_2,x_2)|\!\le\!\frac{\sqrt{2}}{2}(4T+Y)\sqrt{|t_1{-}t_2|^2{+}|x_1{-}x_2|^2}$ for every $(t_1,x_1)$, $(t_2,x_2)\in Q_\ell$,
		\item[2)] there exists $\delta_1>0$ such that for every $a,\,b\in[0,\ell_0]$ satisfying $|a-b|\le\delta_1$ it holds
		\begin{equation*}
			|v_0(a){-}v_0(b)|+\left|\int_{a}^{b}v_1(r)\d r\right|\le\frac \varepsilon 2.
		\end{equation*}
	\end{itemize}
	Let us define $\displaystyle \delta:=\min\left\{\frac{\delta_1}{2}, \frac{4\sqrt{2}\,\,\varepsilon}{\nu^2M(4T+Y)}\right\}$ $\left(\displaystyle \delta=\frac{\delta_1}{2}\text{ if }\nu=0\right)$ and take $(t_1,x_1)$, $(t_2,x_2)\in Q$ satisfying $\sqrt{|t_1{-}t_2|^2{+}|x_1{-}x_2|^2}\le\delta$.\\ 
	For the sake of clarity we define $\displaystyle H_{v,\lambda}(t,x):= \left(\iint_{R_\ell(t,x)}v(\tau,\sigma)\d\sigma\d\tau\right)\chi_{Q_\ell}(t,x)$, so that
	\begin{align*}
		&|\Psi_1(v,\lambda)(t_1,x_1)-\Psi_1(v,\lambda)(t_2,x_2)|\\
		&\le|A_\ell(t_1,x_1)\chi_{Q_\ell}(t_1,x_1)-A_\ell(t_2,x_2)\chi_{Q_\ell}(t_2,x_2)|+\frac{\nu^2}{8}|H_{v,\lambda}(t_1,x_1)-H_{v,\lambda}(t_2,x_2)|=:I+II.
	\end{align*}
We notice that since $A_\ell\chi_{Q_\ell}$ and $H_{v,\lambda}$ vanish on $Q\setminus Q_\ell$ and they are continuous on the whole $Q$, it is enough to consider the case in which both $(t_1,x_1)$ and $(t_2,x_2)$ are in $Q_\ell$; in this case to estimate $II$ we use 2):
	\begin{align*}
		II\le\frac{\nu^2}{8}\iint_{R_{\ell}(t_1,x_1)\triangle R_{\ell}(t_2,x_2)}\!\!\!\!\!\!\!\!\!\!\!\!\!\!\!\!\!\!\!\!\!\!\!\!\!\!\!\!\!\!\!\!\!\!\!\!|v(\tau,\sigma)|\d\sigma\d\tau\le\frac{\nu^2}{8}M|R_{\ell}(t_1,x_1)\triangle R_{\ell}(t_2,x_2)|\le\frac{\nu^2}{16}M\sqrt{2}(4T+Y)\delta\le\frac{\varepsilon}{2}.
	\end{align*}
	For $I$ we exploit the explicit expression of $A_\ell$ given by \eqref{homsol} and we consider three different cases: if $(t_1,x_1)$, $(t_2,x_2)\in Q_\ell\cap\{t+x\le\ell_0\}$ we have
	\begin{align*}
		I\le\frac 12|v_0(x_1{+}t_1){-}v_0(x_2{+}t_2)|+\frac 12\left|\int_{x_2{+}t_2}^{x_1{+}t_1}\!\!\!\!\!\!\!v_1(r)\d r\right|+\frac 12|v_0(x_1{-}t_1){-}v_0(x_2{-}t_2)|+\frac 12\left|\int_{x_2{-}t_2}^{x_1{-}t_1}\!\!\!\!\!\!\!v_1(r)\d r\right|,
	\end{align*}
	and since $|(x_1{\pm}t_1)-(x_2{\pm}t_2)|\le 2\sqrt{|t_1{-}t_2|^2{+}|x_1{-}x_2|^2}\le\delta_1$, by 1) we deduce $I\le\varepsilon/2$.\par 
	If instead $(t_1,x_1)$, $(t_2,x_2)\in Q_\ell\cap\{t+x\ge\ell_0\}$ we get
	\begin{align*}
	I&\le\frac 12|v_0({-}\omega(x_1{+}t_1)){-}v_0({-}\omega(x_2{+}t_2))|+\frac 12\left|\int_{{-}\omega(x_2{+}t_2)}^{{-}\omega(x_1{+}t_1)}\!\!\!\!\!\!\!v_1(r)\d r\right|\\
	&\quad+\frac 12|v_0(x_1{-}t_1){-}v_0(x_2{-}t_2)|+\frac 12\left|\int_{x_2{-}t_2}^{x_1{-}t_1}\!\!\!\!\!\!\!v_1(r)\d r\right|,
	\end{align*}
	and since $|\omega(x_1{+}t_1)-\omega(x_2{+}t_2)|\le|(x_1{+}t_1)-(x_2{+}t_2)|\le \delta_1$ (we recall that $\omega$ is $1$-Lipschitz, see~\eqref{deromega}) again we have $I\le\varepsilon/2$.\par
	Finally if $(t_1,x_1)\in Q_\ell\cap\{t+x\le\ell_0\}$ while $(t_2,x_2)\in Q_\ell\cap\{t+x\ge\ell_0\}$ we get
	\begin{align*}
	I&\le\frac 12|v_0(x_1{-}t_1){-}v_0(x_2{-}t_2)|+\frac 12\left|\int_{x_2{-}t_2}^{x_1{-}t_1}\!\!\!\!\!\!\!v_1(r)\d r\right|\\
	&\quad+\frac 12|v_0(x_1{+}t_1){-}v_0({-}\omega(x_2{+}t_2))|+\frac 12\left|\int_{{-}\omega(x_2{+}t_2)}^{x_1{+}t_1}\!\!\!\!\!\!\!v_1(r)\d r\right|,
	\end{align*}
	and observing that for this configuration of $(t_1,x_1)$ and $(t_2,x_2)$ it holds 
	\begin{align*}
	|(x_1{+}t_1)+\omega(x_2{+}t_2)|&\le|(x_1{+}t_1)-\ell_0|+|\omega(x_2{+}t_2)-\omega(\ell_0)|\\
	&\le \ell_0-(x_1{+}t_1)+(x_2{+}t_2)-\ell_0\\
	&\le|t_1{-}t_2|+|x_1{-}x_2|\le\delta_1,
	\end{align*}
	we deduce also in this case $I\le\varepsilon/2$.\par 
	These estimates yield
	\begin{equation*}
		|\Psi_1(v,\lambda)(t_1,x_1)-\Psi_1(v,\lambda)(t_2,x_2)|\le I+II\le\varepsilon,
	\end{equation*} 
	and so we conclude.
\end{proof}
We now denote by $\mc B_1$ the closure of $\Psi_1(X)$ with respect to uniform convergence and we define $\mc B:=\mc B_1\times \mc B_2$; we notice that by Lemma~\ref{equicontinuity} and the Ascoli-Arzel\`a Theorem (see for istance~\cite{Rud}, Theorem~11.28) $\mc B$ is a complete metric space if endowed with the distance
\begin{equation}\label{distance}
\d\left((v^1,\lambda^1),(v^2,\lambda^2)\right):=\max\{\Vert v^1-v^2\Vert_{L^2(Q)}, \Vert \lambda^1-\lambda^2\Vert_{C^0(I_Y)}\}.
\end{equation}
\begin{prop}\label{contractioncoupled}
	Fix $\nu\ge0$, $\ell_0>0$ and consider $v_0$, $v_1$ satisfying \eqref{bdry2} and $v_0(\ell_0)=0$. Assume that $\kappa\in C^{0,1}([\ell_0,+\infty)) $ satisfies \eqref{kappa} and fix $T>0$ and $M>0$.\par
	Then there exists $Y\in(0,\ell_0)$ such that the operator $\Psi$ in \eqref{Psi} is a contraction from $(\mc B,\d)$ into itself.
\end{prop}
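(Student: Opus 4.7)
The plan is to choose $Y>0$ small enough that $\Psi=(\Psi_1,\Psi_2)$ becomes Lipschitz in $(v,\lambda)$ with constant strictly less than $1$. I estimate the two components against the two factors of $\d$ separately, showing that all relevant Lipschitz constants vanish as $Y\to 0^+$.

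\textbf{Estimate on $\Psi_2$.} Using the elementary inequality $|\max\{a,1\}-\max\{b,1\}|\le|a-b|$, we reduce to controlling $\int_{-\ell_0}^{-\ell_0+Y}|\Lambda_{v^1,\lambda^1}(s)-\Lambda_{v^2,\lambda^2}(s)|\,\mathrm{d}s$. Writing $\Lambda=N^2/D$ with $N_{v,\lambda}$ the bracketed expression in \eqref{Lambda} and $D_\lambda(y)=2e^{\nu\lambda(y)}\kappa(\lambda(y)-y)$, the lower bound $D\ge 2c_1$ coming from \eqref{kappa}, the Lipschitz character of $\kappa$ and of $\exp$ on $[0,\nu T]$, and the uniform bound on $N$ (depending only on $\|\dot v_0\|_{L^2}$, $\|v_1\|_{L^2}$, $\nu^2MT/4$) give
\begin{equation*}
|\Lambda_{v^1,\lambda^1}-\Lambda_{v^2,\lambda^2}|\le C\bigl(|N^1-N^2|+\|\lambda^1-\lambda^2\|_{C^0(I_Y)}\bigr).
\end{equation*}
The numerator difference satisfies
\begin{equation*}
|N_{v^1,\lambda^1}(s)-N_{v^2,\lambda^2}(s)|\le\tfrac{\nu^2}{4}\int_0^T|v^1(\tau,\tau-s)-v^2(\tau,\tau-s)|\,\mathrm{d}\tau+\tfrac{\nu^2}{4}M|\lambda^1(s)-\lambda^2(s)|,
\end{equation*}
and integrating in $s\in I_Y$, using Fubini and the change of variables $x=\tau-s$, the first summand becomes $\iint_Q|v^1-v^2|\le\sqrt{TY}\,\|v^1-v^2\|_{L^2(Q)}$ by Cauchy--Schwarz. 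Every contribution therefore picks up a factor of $Y$ or $\sqrt{TY}$, yielding
\begin{equation*}
\|\Psi_2(v^1,\lambda^1)-\Psi_2(v^2,\lambda^2)\|_{C^0(I_Y)}\le C_2(Y)\,\d\bigl((v^1,\lambda^1),(v^2,\lambda^2)\bigr),\qquad C_2(Y)\to 0\ \text{as }Y\to 0^+.
\end{equation*}

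\textbf{Estimate on $\Psi_1$.} I write
\begin{equation*}
\Psi_1(v^1,\lambda^1)-\Psi_1(v^2,\lambda^2)=(A_{\ell^1}-A_{\ell^2})\chi_{Q_{\ell^1}\cap Q_{\ell^2}}+A_{\ell^1}\chi_{Q_{\ell^1}\setminus Q_{\ell^2}}-A_{\ell^2}\chi_{Q_{\ell^2}\setminus Q_{\ell^1}}+\tfrac{\nu^2}{8}(H_{v^1,\lambda^1}-H_{v^2,\lambda^2}),
\end{equation*}
with $H_{v,\lambda}(t,x)=\iint_{R_{\ell_\lambda}(t,x)}v\,\mathrm{d}\sigma\,\mathrm{d}\tau\cdot\chi_{Q_{\ell_\lambda}}(t,x)$. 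Splitting the integrals on $R_{\ell^1}\cap R_{\ell^2}$ and $R_{\ell^1}\triangle R_{\ell^2}$, the first piece is bounded pointwise by $\iint_Q|v^1-v^2|\le\sqrt{TY}\|v^1-v^2\|_{L^2(Q)}$, whose $L^2(Q)$-norm is $TY\|v^1-v^2\|_{L^2(Q)}$. The symmetric-difference contributions are bounded by $M\cdot|R_{\ell^1}(t,x)\triangle R_{\ell^2}(t,x)|$; to estimate the latter I use that $\omega_\lambda$ is the inverse of the strictly increasing map $\mu_\lambda(y)=2\lambda(y)-y$ (whose derivative is $\ge 1$), hence $\|\omega_{\lambda^1}-\omega_{\lambda^2}\|_{C^0}\le 2\|\lambda^1-\lambda^2\|_{C^0(I_Y)}$ and consequently $|R_{\ell^1}(t,x)\triangle R_{\ell^2}(t,x)|\le CT\|\lambda^1-\lambda^2\|_{C^0(I_Y)}$. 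An analogous argument bounds $|Q_{\ell^1}\triangle Q_{\ell^2}|$ and controls the boundary-strip term in $L^2(Q)$. The most delicate piece is $(A_{\ell^1}-A_{\ell^2})\chi_{Q_{\ell^1}\cap Q_{\ell^2}}$: by the explicit formula \eqref{homsol}, $A_\ell$ depends on $\ell$ only through $v_0(-\omega_\lambda(t+x))$ and $\int_{x-t}^{-\omega_\lambda(t+x)}v_1\,\mathrm{d}r$ in region $\Omega_3'$. Cauchy--Schwarz applied to these terms (exploiting $v_0\in H^1$, $v_1\in L^2$) followed by a Fubini argument, which introduces the factor $\sqrt{|Q|}=\sqrt{TY}$, produces an $L^2(Q)$-estimate proportional to a small power of $Y$ times $\|\lambda^1-\lambda^2\|_{C^0(I_Y)}$. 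Collecting everything,
\begin{equation*}
\|\Psi_1(v^1,\lambda^1)-\Psi_1(v^2,\lambda^2)\|_{L^2(Q)}\le C_1(Y)\,\d\bigl((v^1,\lambda^1),(v^2,\lambda^2)\bigr),\qquad C_1(Y)\to 0\ \text{as }Y\to 0^+.
\end{equation*}
Choosing $Y$ so small that $\max\{C_1(Y),C_2(Y)\}<1$ then yields the contraction on $(\mc B,\d)$.

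\textbf{The main obstacle} is the Lipschitz estimate for the $A$-term of $\Psi_1$: the $H^1$-regularity of $v_0$ alone only gives $1/2$-H\"older dependence of $v_0(-\omega_{\lambda}(\cdot))$ on $\lambda$ pointwise, so it is essential to work in the $L^2(Q)$-norm and to exploit carefully the small measure $|Q|=TY$ via Fubini; tracking the powers of $Y$ produced along the way and verifying that the sum of contraction constants stays below $1$ is the technical core of the argument.
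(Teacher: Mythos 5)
Your overall strategy coincides with the paper's: separate Lipschitz estimates for $\Psi_2$ and $\Psi_1$, the splitting of $\Psi_1$ over $Q_{\ell^1}\cap Q_{\ell^2}$ and the strips $Q_{\ell^1}\triangle Q_{\ell^2}$, the key bound $\Vert\omega_{\lambda^1}-\omega_{\lambda^2}\Vert_{C^0}\le 2\Vert\lambda^1-\lambda^2\Vert_{C^0(I_Y)}$ (your derivation through the explicit inverse $\omega_\lambda^{-1}(y)=2\lambda(y)-y$ is a clean equivalent of the paper's chain of identities), $|Q_{\ell^1}\triangle Q_{\ell^2}|\le Y\Vert\lambda^1-\lambda^2\Vert_{C^0(I_Y)}$, and the Cauchy--Schwarz/Fubini treatment of the $A$-difference in the $L^2(Q)$-norm. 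There is, however, a genuine flaw in your $\Psi_2$ step: there is no ``uniform bound on $N$''. The quantity $N_{v,\lambda}(s)=\dot v_0(-s)-v_1(-s)-\frac{\nu^2}{4}\int_0^{\lambda(s)}v(\tau,\tau-s)\,\mathrm{d}\tau$ involves the pointwise values of $\dot v_0$ and $v_1$, which are merely $L^2$ functions and cannot be bounded by their $L^2$-norms. Hence the displayed inequality $|\Lambda_{v^1,\lambda^1}-\Lambda_{v^2,\lambda^2}|\le C\bigl(|N^1-N^2|+\Vert\lambda^1-\lambda^2\Vert_{C^0(I_Y)}\bigr)$ with an absolute constant $C$ is false: the difference of squares produces the unbounded factor $|N^1+N^2|$, and the differences of $\kappa$ and of the exponential are multiplied by $(N^2)^2$. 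As a consequence, the claim that ``every contribution picks up a factor of $Y$ or $\sqrt{TY}$'' is also not obtainable.

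The repair is the one the paper uses: carry the weight $j(s):=|\dot v_0(-s)|+|v_1(-s)|+1\in L^2(-\ell_0,0)$, so that pointwise $|\Lambda_{v^1,\lambda^1}-\Lambda_{v^2,\lambda^2}|\le C\bigl(j\,|N^1-N^2|+j^2\,\Vert\lambda^1-\lambda^2\Vert_{C^0(I_Y)}\bigr)$; then apply Cauchy--Schwarz in $s$ against $\Vert j\Vert_{L^2(I_Y)}$ for the term containing $\int_0^T|v^1-v^2|(\tau,\tau-s)\,\mathrm{d}\tau$, and keep the factors $\int_{I_Y}j$ and $\int_{I_Y}j^2$ for the remaining ones. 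Smallness as $Y\to0^+$ then follows from the absolute continuity of the integral of $j$ and $j^2$: the contraction constant is $o(1)$ in $Y$ (depending on the data), not an explicit power of $Y$, but this suffices. A second, smaller, thin spot is the boundary-strip term: on $Q_{\ell^1}\setminus Q_{\ell^2}$ the competitor $H_{v^2,\lambda^2}$ (and $A_{\ell^2}\chi_{Q_{\ell^2}}$) vanishes, so you must show that $|R_{\ell^1}(t,x)|$ itself, and the length of the integration interval in $A_{\ell^1}(t,x)$, are small there; this follows from the observation that on that strip $\omega^1(t+x)\le t-x\le\omega^2(t+x)$, whence $|(t-x)-\omega^1(t+x)|\le 2\Vert\lambda^1-\lambda^2\Vert_{C^0(I_Y)}$, which your ``analogous argument'' should be made to contain explicitly. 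With these corrections your proof closes along the same lines as the paper's.
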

We prefer to postpone the (long and technical) proof of Proposition~\ref{contractioncoupled} to the end of the Section, so that we are at once in a position to state and prove the main result of the paper, which generalises Theorem~3.5 in~\cite{DMLazNar16}:
	\begin{thm}\label{finalthm}
		Fix $\nu\ge 0$, $\ell_0>0$ and consider $u_0$, $u_1$ and $w$ satisfying \eqref{bdryregularity} and \eqref{compatibility0}. Assume that the measurable function $\kappa\colon[\ell_0,+\infty)\to(0,+\infty) $ fulfills the following property:
		\begin{equation}\label{kappaeps}
			\mbox{for every }x\in[\ell_0,+\infty)\text{ there exists }\varepsilon=\varepsilon(x)>0\text{ such that } \kappa\in C^{0,1}([x,x+\varepsilon]).
		\end{equation}
		Then there exists a unique pair $(u,\ell)$ solving the coupled problem in the sense of Definition~\ref{coupleddef}. Moreover $u$ has a continuous representative on $\overline{\Omega_\ell}$ and it holds:
		\begin{equation*}
		u\in C^0([0,+\infty);H^1(0,+\infty))\cap C^1([0,+\infty);L^2(0,+\infty)). 
		\end{equation*}
	\end{thm}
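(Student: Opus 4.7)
The plan is to establish local-in-time existence and uniqueness by applying Proposition~\ref{contractioncoupled} and then to iterate the construction until the solution is global, mimicking the scheme used in the proof of Theorem~\ref{globalsolutionprescr}. Since \eqref{kappaeps} only provides a Lipschitz neighbourhood around each point, I first fix $\varepsilon_0=\varepsilon(\ell_0)>0$ and replace $\kappa$ by an auxiliary toughness $\tilde\kappa\in C^{0,1}([\ell_0,+\infty))$ which agrees with $\kappa$ on $[\ell_0,\ell_0+\varepsilon_0/2]$ and satisfies bounds of the form \eqref{kappa}. The modified problem now fulfils all hypotheses of Proposition~\ref{contractioncoupled}.

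I would then choose $M_0$ large enough to control the $C^0$-norm of $A_{\ell_\lambda}$ for every admissible $\lambda$ (this depends only on $v_0,v_1,z$ via \eqref{homsol}), and apply Lemmas~\ref{itself}--\ref{equicontinuity} and Proposition~\ref{contractioncoupled} to produce $T_1\in(0,\ell_0/2)$ and $Y_1\in(0,\ell_0)$ such that the operator $\Psi$ has a unique fixed point $(\bar v,\bar\lambda)$ in $\mathcal{B}(T_1,Y_1)$. Possibly shrinking $T_1$, and using $\dot{\ell}_{\bar\lambda}<1$, I arrange that $\ell_{\bar\lambda}(t)\in[\ell_0,\ell_0+\varepsilon_0/2]$ for every $t\in[0,T_1]$; along this trajectory $\tilde\kappa\equiv\kappa$, so $(\bar v,\bar\lambda)$ is also a solution of the original coupled system. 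Setting $\bar u(t,x):=e^{-\nu t/2}\bar v(t,x)$ then yields a solution $(\bar u,\ell_{\bar\lambda})$ of \eqref{problem1}\&\eqref{Griffithcrit} on $[0,T_1]$ in the sense of the definition preceding the statement.

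The continuation step is carried out exactly as in Theorem~\ref{globalsolutionprescr}: I take $\bar u(T_1,\cdot)$, $\bar u_t(T_1,\cdot)$ and the translated boundary datum $w(T_1+\cdot)$ as new data, $\ell_1:=\ell_{\bar\lambda}(T_1)$ as new starting point (the compatibility conditions \eqref{compatibility0} are preserved by construction), and I apply the first step around $\ell_1$ using $\varepsilon(\ell_1)$. Iterating produces a strictly increasing sequence $\{T_k\}$ of existence times. To see that $T_k\to+\infty$, I argue by contradiction: if $T_k\to T^*<+\infty$, then because $\dot\ell<1$ the values $\ell(T_k)$ stay in the compact interval $[\ell_0,\ell_0+T^*]$; by a finite cover of this interval with Lipschitz neighbourhoods provided by \eqref{kappaeps}, one obtains uniform constants $M$, $Y$, and hence a uniform positive lower bound on $T_{k+1}-T_k$, contradicting the convergence $T_k\to T^*$.

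For global uniqueness, any other solution $(u,\ell)$ of the coupled problem gives, via $v=e^{\nu t/2}u$ and $\lambda=(t-\ell)^{-1}$ (well defined since $\dot\ell<1$), a fixed point of $\Psi$ that lies in some $\mathcal{B}(T_1,Y_1)$ on a short initial window; the contraction forces it to coincide with $(\bar v,\bar\lambda)$ there, and the coincidence propagates across successive intervals. The final regularity $u\in C^0([0,+\infty);H^1(0,+\infty))\cap C^1([0,+\infty);L^2(0,+\infty))$ then follows directly from Theorem~\ref{globalsolutionprescr} applied with the uniquely determined $\ell$. I expect the main technical difficulty to be the uniform lower bound on $T_{k+1}-T_k$: it requires careful book-keeping of how the constants $T$, $Y$, $M$ entering the fixed-point setup, and the local Lipschitz constant of $\kappa$, depend on the current position of the debonding front as $k$ varies.
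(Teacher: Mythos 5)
Your local step is the same as the paper's: introduce a virtual toughness $\tilde\kappa$ that agrees with $\kappa$ near $\ell_0$ and is globally Lipschitz and bounded as in \eqref{kappa}, run the contraction of Proposition~\ref{contractioncoupled}, and observe that for short time the front stays where $\tilde\kappa\equiv\kappa$. (One small omission there: the fixed point of $\Psi$ only determines $v$ on the strip $Q$ near the front; to get the solution on all of $(\Omega_\ell)_{T_1}$ you must glue it with the solution of the prescribed-front problem away from the front — the paper does this explicitly with the triangle solution $v^0$.)

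The genuine gap is in your globalization. You propose to show $T_k\to+\infty$ by extracting a uniform positive lower bound on $T_{k+1}-T_k$ from a finite cover of $[\ell_0,\ell_0+T^*]$ by the Lipschitz neighbourhoods of \eqref{kappaeps}. But the admissible width $Y$ (and hence the existence time) in Lemma~\ref{itself} and Proposition~\ref{contractioncoupled} is \emph{not} controlled by $\kappa$ alone: it is chosen so that quantities like $\int_{\ell_0-Y}^{\ell_0}\bigl(|\dot v_0|^2+|v_1|^2\bigr)\d s$ are small, i.e.\ it depends on the equi-integrability of the restarted data $v_x(T_k,\cdot)$, $v_t(T_k,\cdot)$ near $x=\ell(T_k)$. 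These are merely $L^2$ functions varying with $k$, and nothing in your argument makes their local $L^2$ mass near the front uniformly small; to get such uniformity you would essentially need the solution already constructed up to the limit time $T^*$. The paper avoids the issue entirely: it defines $T^*$ as the supremum of good times, uses uniqueness to glue the $\ell_k$ into a single Lipschitz $\ell$ on $[0,T^*]$, invokes Theorem~\ref{globalsolutionprescr} (prescribed front) to obtain the solution $v$ on the \emph{closed} interval $[0,T^*]$ together with $v(T^*,\cdot)\in H^1$, $v_t(T^*,\cdot)\in L^2$, and then restarts the contraction from $T^*$ to produce a strictly larger good time, contradicting maximality. You should replace your finite-cover argument by this supremum-and-restart scheme (or otherwise supply the missing uniform control of the restart data). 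The uniqueness and regularity parts of your proposal are fine.
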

\begin{rmk}
	Condition \eqref{kappaeps} allows for a wide range of left-discontinuous toughnesses, including $\kappa$ whose limits from the left (at discontinuity points) and to infinity can be $0$, $+\infty$ or they cannot even exist. However we point out that the right Lipschitzianity of $\kappa$ is instead crucial for the validity of the Theorem (see Remark~\ref{failureuniq}).
\end{rmk}
	\begin{proof}[Proof of Theorem~\ref{finalthm}]
		To conclude we need to prove there exists a unique pair $(v,\ell)$ solution of~\eqref{coupledv}. Rearranging Proposition~\ref{contractionCl} we firstly deduce there exists a unique $v^0$ satisfying \eqref{Duhamel} in the triangle $\{(t,x)\mid0\le t\le \ell_0,\,0\le x\le \ell_0{-}t\}$.\par
		Now consider $\varepsilon=\varepsilon(\ell_0)$ given by \eqref{kappaeps} and let us introduce a virtual toughness $\tilde{\kappa}$ which coincides with $\kappa$ in $[\ell_0,\ell_0+\varepsilon]$ and which is equal to $\kappa(\ell_0{+}\varepsilon)$ after $\ell_0+\varepsilon$. Since by construction $\tilde{\kappa}\in C^{0,1}([\ell_0,+\infty))$ and ${c_1}_\varepsilon\le\tilde{\kappa}(x)\le {c_2}_\varepsilon $ for some $0<{c_1}_\varepsilon\le{c_2}_\varepsilon$, exploiting Proposition~\ref{contractioncoupled} we can find $Y\in(0,\ell_0)$ and $T=T(Y)>0$ for which there exists a unique pair $(v^1,\ell^1)$ satisfying
		\begin{equation}\label{ciao}
			\begin{cases}
			v^1(t,x)=\left(A_{\ell^1}(t,x)+\displaystyle\frac{\nu^2}{8}\displaystyle\iint_{R_{\ell^1}(t,x)}v^1(\tau,\sigma)\d\sigma \d\tau\right)\chi_{Q_{\ell^1}}(t,x),&\mbox{for every }(t,x)\in Q,\\
			\ell^1(t)=\ell_0+\displaystyle\int_{0}^{t}\max\left\{\frac{(G_0)_{v^1,\ell^1}(s)-\tilde\kappa(\ell^1(s))}{(G_0)_{v^1,\ell^1}(s)+\tilde\kappa(\ell^1(s))},0\right\}\d s,&\mbox{for every } t\in[0,T].
			\end{cases}		
		\end{equation}
		Since $\ell^1(0)=\ell_0$ and $\tilde\kappa\equiv\kappa$ in $[\ell_0,\ell_0+\varepsilon]$, using the continuity of $\ell^1$ we deduce there exists a small time $T_\varepsilon>0$ such that $(v^1,\ell^1)$ satisfies \eqref{ciao} replacing $\tilde\kappa$ by $\kappa$ and $T$ by $T_\varepsilon$. Gluing together $v^0$ and $v^1$ and recalling Lemmas~\ref{A} and \ref{derivatives} we get the existence of a time $\widetilde{T}\in\left(0,\frac{\ell_0}{2}\right)$ satisfying the following properties:
		\begin{itemize}
			\item[a)] there exists a unique pair $(\tilde v,\tilde\ell)$ solution of \eqref{coupledv} in $[0,\widetilde T]$,
			\item[b)] $\tilde v$ belongs to $C^0([0,\widetilde{T}];H^1(0,+\infty))\cap C^1([0,\widetilde{T}];L^2(0,+\infty))$.						
		\end{itemize}
	
	Then we define $T^*:=\sup\{\widetilde{T}>0\mid \widetilde{T}\mbox{ satisfies a) and b)}\}$. If $T^*=+\infty$ we conclude; so let us assume by contradiction that $T^*<+\infty$ and consider an increasing sequence of times $\{T_k\}$ satisfying a) and  b) and converging to $T^*$. Let $(v_k,\ell_k)$ be the pair related to $T_k$ by a).\par
	Since by uniqueness $\ell_{k+1}(t)=\ell_{k}(t)$ for every $t\in[0,T_k]$ and since $0\le\dot\ell_k(t)< 1$ for a.e. $t\in[0,T_k]$, there exists a unique Lipschitz function $\ell$ defined on $[0,T^*]$ such that $\ell(t)=\ell_k(t)$ for every $t\in[0,T_k]$; hence $\ell(0)=\ell_0$ and $0\le\dot\ell(t)< 1$ for a.e. $t\in[0,T^*]$. Then by Theorem~\ref{globalsolutionprescr} there exists a unique continuous function $v$ on $\overline{(\Omega_\ell)_{T^*}}$ solution of $\eqref{problem2}$ in $(\Omega_\ell)_{T^*}$ belonging to $C^0([0,T^*];H^1(0,+\infty))\cap C^1([0,T^*];L^2(0,+\infty))$. Necessarily $v$ and $v_k$ coincide on $\overline{(\Omega_\ell)_{T_k}}$ for every $k\in\enne$ and hence $(v,\ell)$ is the unique solution of \eqref{coupledv} in $[0,T^*]$.\par
	Now we can repeat the contraction argument starting from time $T^*$: we replace $\ell_0$ by $\ell^*_0:=\ell(T^*)$, $v_0$ by $v(T^*,\cdot)\in H^1(0,\ell^*_0)$ and $v_1$ by $v_t(T^*,\cdot)\in L^2(0,\ell^*_0)$; notice that $v(T^*,0)=z(T^*)$ and $v(T^*,\ell^*_0)=0$, so the compatibility conditions \eqref{compatibility0} are satisfied. Arguing as before (now with $\varepsilon=\varepsilon(\ell_0^*)$ given by \eqref{kappaeps}) and as in the proof of Theorem~\ref{globalsolutionprescr} we deduce the existence of a time $\widehat{T}>T^*$ satisfying a) and b). This is absurd, being $T^*$ the supremum.
	\end{proof}
\begin{rmk}[\textbf{Regularity}]
	Arguing as in Remark~\ref{lipschitzregglob}, if we assume that $u_0\in C^{0,1}([0,\ell_0])$, $u_1\in L^\infty(0,\ell_0)$, $w\in\widetilde{C}{^{0,1}}([0,+\infty)) $ satisfy \eqref{compatibility0}, if the (measurable) toughness $\kappa$ satisfies \eqref{kappaeps}, then the solution $u$ belongs to $\widetilde{C}{^{0,1}}(\overline{\Omega_\ell})$ and $u_t(t,\cdot)$ is in $ L^\infty(0,+\infty)$ for every $t\in[0,+\infty)$. If in addition for every $\bar x>\ell_0$ there exists a positive constant $c_{\bar x}$ such that $\kappa(x)\ge c_{\bar x}$ for every $x\in[\ell_0,\bar x]$, then for every $T>0$ there exists $\delta_T\in(0,1)$ such that $0\le\dot\ell(t)\le 1-\delta_T$ for a.e. $t\in[0,T]$.
\end{rmk}
\begin{rmk}[\textbf{More regularity}]
	As in Remark~\ref{morereg}, if we assume that $u_0\in C^{1,1}([0,\ell_0])$, $u_1\in C^{0,1}([0,\ell_0])$, $w\in\widetilde{C}{^{1,1}}([0,+\infty)) $ satisfy \eqref{compatibility0}, if the toughness $\kappa\colon[\ell_0,+\infty)\to(0,+\infty)$ belongs to $\widetilde{C}{^{0,1}}([\ell_0,+\infty))$, in order to have $\ell\in \widetilde C^{1,1}([0,+\infty))$ and $u\in\widetilde C^{1,1}(\overline{\Omega_\ell})$ we need to impose a first order compatibility condition:
	\begin{equation}\label{boh}
	\begin{gathered}
	u_1(0)=\dot w(0),\\
	u_1(\ell_0)+\dot{u}_0(\ell_0)\max\left\{\frac{[\dot{u}_0(\ell_0)-u_1(\ell_0)]^2-2\kappa(\ell_0)}{[\dot{u}_0(\ell_0)-u_1(\ell_0)]^2+2\kappa(\ell_0)},0\right\}=0.
	\end{gathered}		
	\end{equation}
	Notice the relationship between \eqref{boh} and \eqref{comp2}, given by the equation for $\ell$ \eqref{ellepunto}. We want also to point out that the second condition in \eqref{boh} is equivalent to:
	\begin{equation*}
		\Big(u_1(\ell_0)=0,\,\, \dot{u}_0(\ell_0)^2\le 2\kappa(\ell_0)\Big)\text{ or }\Big(u_1(\ell_0)\neq 0,\,\,\dot{u}_0(\ell_0)^2{-}u_1(\ell_0)^2=2\kappa(\ell_0),\,\,\frac{\dot{u}_0(\ell_0)}{u_1(\ell_0)}<{-}1\Big).
	\end{equation*}
\end{rmk}
\begin{rmk}[\textbf{Time-dependent toughness}]
	Proposition~\ref{contractioncoupled}, and hence Theorem~\ref{finalthm}, holds true even in the case of a time-dependent toughness. To be precise, replacing \eqref{enkappa} by
	\begin{equation*}
		\int_{0}^{t}\kappa(s,\ell(s))\dot{\ell}(s)\d s,
	\end{equation*}
	where now $\kappa\colon[0,+\infty)\times[\ell_0,+\infty)\to(0,+\infty)$ also depends on time (and is Borel), we obtain that \eqref{equation} becomes
	\begin{equation*}
	\dot{\ell}(t)=\max\left\{\frac{G_0(t)-\kappa(t,\ell(t))}{G_0(t)+\kappa(t,\ell(t))},0\right\},\quad\quad\quad\quad \text{for a.e. } t\in[0,+\infty),
	\end{equation*}
	and in this case the denominator in \eqref{Lambda} reads as $2e^{\nu\lambda(y)}\kappa(\lambda(y),\lambda(y)-y)$.\par 
	So, if we assume that $\kappa\in C^{0,1}([0,+\infty)\times[\ell_0,+\infty))$ satisfies $0<c_1\le\kappa(t,x)\le c_2$ for every $(t,x)\in[0,+\infty)\times[\ell_0,+\infty)$, we can repeat with no changes the proofs of Lemma~\ref{itself} and Proposition~\ref{contractioncoupled} (pay attention to $\mathit{Step\,1}$). For Theorem~\ref{finalthm} we replace \eqref{kappaeps} by: 
	\begin{equation}\label{ext}
	\begin{gathered}
	\mbox{for every }(t,x)\in[0,+\infty)\times[\ell_0,+\infty)\text{ there exists }\varepsilon=\varepsilon(t,x)>0\\
	\text{such that } \kappa\in C^{0,1}([t,t+\varepsilon]\times[x,x+\varepsilon]),
	\end{gathered}		
	\end{equation}
	and we perform a similar proof: in order to start the machinery that leads to the existence of a unique solution to the coupled problem we only need to introduce a virtual toughness $\tilde\kappa$ for which we can apply Proposition~\ref{contractioncoupled}; such a $\tilde\kappa$ is obtained by extending $\kappa$ outside $[0,\varepsilon]\times[\ell_0,\ell_0+\varepsilon]$ (where $\varepsilon=\varepsilon(0,\ell_0)$) in a Lipschitz way and then truncating this extension between two suitable values. 
\end{rmk}
\begin{rmk}[\textbf{Lack of uniqueness and of existence}]\label{failureuniq}
	We want to remark that the right Lipschitzianity of the toughness $\kappa$ is crucial for the validity of Theorem~\ref{finalthm}, at least in the undamped case $\nu=0$. Indeed, removing that assumption, the following example shows how the coupled problem can have more than one (actually infinitely many) solution:\par 
	fix $\ell_0>0$ and let $\nu=0$; pick $u_0\equiv0$ and $u_1\equiv1$ in $[0,\ell_0]$, $w\equiv0$ in $[0,+\infty)$ and consider $\kappa(x)=\displaystyle\frac 12\max\left\{\frac{1-\sqrt{x-\ell_0}}{1+\sqrt{x-\ell_0}},\frac 12\right\}$ for every $x\ge\ell_0$. If the time $T$ is small enough the equation for $\ell$ in \eqref{coupledv} can be written in the following way:
	\begin{equation}\label{counterex}
	\begin{cases}
	\dot{\ell}(t)=\displaystyle\frac{1-2\kappa(\ell(t))}{1+2\kappa(\ell(t))}=\sqrt{\ell(t)-\ell_0}, &\mbox{for a.e. }t\in[0,T],\\
	\ell(0)=\ell_0.
	\end{cases}		
	\end{equation}
	It is well known that Cauchy problem \eqref{counterex} admits infinitely many solutions, for instance two of them are $\ell(t)=\ell_0$ and $\ell(t)=\frac{t^2}{4}+\ell_0$; so coupled problem \eqref{coupledv} admits infinitely many solutions as well.\par
	If instead $\kappa$ is neither right continuous, we can have no solutions to the coupled problem: under the previous assumptions consider $\kappa(x)=1/6$ if $x=\ell_0$ and $\kappa(x)=1/2$ otherwise, then (for $T$ small enough) the equation for $\ell$ reads as
	\begin{equation}\label{impossible}
		\dot{\ell}(t)=\begin{cases}
		1/2,&\text{if }\ell(t)=\ell_0,\\
		0,&\text{if }\ell(t)>\ell_0.
		\end{cases}\quad\quad\mbox{for a.e. }t\in[0,T].
	\end{equation}
	Since there are no Lipschitz solutions of \eqref{impossible} satisfying $\ell(0)=\ell_0$ we get that the coupled problem possesses no solutions as well.\par 
	This second example can be also adapted to the case of a piecewise constant and left continuous toughness, choosing properly the initial data $u_0$ and $u_1$.
\end{rmk}
\begin{rmk}[\textbf{Adding a forcing term}]\label{external}
	Following the same presentation of the paper one can also cover the case in which in the model an external force $f$ is present, namely when the equation for the vertical displacement $u$ is
	\begin{equation*}
		u_{tt}(t,x)-u_{xx}(t,x)+\nu u_t(t,x)=f(t,x), \quad t > 0 \,,\, 0<x<\ell(t).
	\end{equation*}
	For the forcing term $f$ we require
	\begin{equation}\label{effe}
		f\in L^2_{\textnormal{loc}}((0,+\infty)^2)\quad\text{such that}\quad f\in L^2((0,T)^2)\quad\text{for every }T>0,
	\end{equation}
	and we introduce the function $g(t,x):=e^{\nu t/2}f(t,x)$, so that $v(t,x)=e^{\nu t/2}u(t,x)$ solves
	\begin{equation*}
		v_{tt}(t,x)-v_{xx}(t,x)-\frac{\nu^2}{4} v(t,x)=g(t,x), \quad t > 0 \,,\, 0<x<\ell(t).
	\end{equation*}
	By Duhamel's principle the representation formula for $v$ now takes the form
	\begin{equation*}
			v(t,x)=A(t,x)+\frac{\nu^2}{8}\iint_{R(t,x)}v(\tau,\sigma)\d\sigma \d\tau+\frac 12\iint_{R(t,x)}g(\tau,\sigma)\d\sigma \d\tau, \quad \mbox{for a.e. }(t,x)\in\Omega',
	\end{equation*}
	and so we can repeat the proofs of Proposition~\ref{contractionCl} and Theorem~\ref{globalsolutionprescr}.\par 
	For the energetic analysis performed in Section~\ref{sec2} we also have to consider the work done by the external forces, namely $\mathcal{F}(t):=\displaystyle\int_{0}^{t}\int_{0}^{\ell(\tau)}f(\tau,\sigma)u_t(\tau,\sigma)\d\sigma\d\tau$; if we take into account the total energy, which now possesses an additional term, i.e. $\mathcal{T}(t)=\mc E(t)+\mc A(t)-\mc F(t)$, then Proposition~\ref{energyderivative} holds true modifying formula \eqref{totderv} (and analogously \eqref{totderu}) to
	\begin{equation*}
		\begin{aligned}
		\dot{\mathcal{T}}(t)\!=\!&-\!\frac{\dot\ell(t)}{2}\frac{1{-}\dot\ell(t)}{1{+}\dot\ell(t)}e^{-\nu t}\!\left[\dot v_0(\ell(t){-}t){-}v_1(\ell(t){-}t){-}\frac{\nu^2}{4}\!\!\int_{0}^{t}\!\!v(\tau,\tau{-}t{+}\ell(t))\d\tau{-}\!\!\int_{0}^{t}\!\!g(\tau,\tau{-}t{+}\ell(t))\d\tau\right]^2\\
		&+\dot w(t)\left[\dot w(t)+\frac{\nu}{2}w(t)-e^{-\frac{\nu t}{2}}\left(\dot v_0(t)+v_1(t)+\frac{\nu^2}{4}\int_{0}^{t}v(\tau,t{-}\tau)\d\tau+\int_{0}^{t}g(\tau,t{-}\tau)\d\tau\right)\right].
		\end{aligned}
	\end{equation*}
	We can also repeat the proof of Theorem~\ref{DERR}, obtaining that for a.e. $t\in\left[0,\frac{\ell_0}{2}\right]$ the dynamic energy release rate can be expressed as
	\begin{equation*}
	G_\alpha(t) = \frac{1}{2}\frac{1-\alpha}{1+\alpha} e^{-\nu t}\left[\dot{v}_0(\ell(t){-}t){-}v_1(\ell(t){-}t){-}\frac{\nu^2}{4}\int_{0}^{t}\!\!v(\tau,\tau{-}t{+}\ell(t))\d\tau{-}\int_{0}^{t}\!\!g(\tau,\tau{-}t{+}\ell(t))\d\tau\right]^2.
	\end{equation*}
	
	Always assuming \eqref{effe} we recover Lemma~\ref{itself} and Lemma~\ref{equicontinuity}, while for Proposition~\ref{contractioncoupled}, and hence for Theorem~\ref{finalthm}, we need to require
	\begin{equation}\label{effe2}
	 f\in L^\infty_{\textnormal{loc}}((0,+\infty)^2)\quad\text{such that}\quad f\in L^\infty((0,T)^2)\quad\text{for every }T>0;
	\end{equation}
	thanks to \eqref{effe2} we can perform their proofs replacing operator \eqref{Psi} by
	\begin{subequations}
		\begin{equation*}
		\Psi_1(v,\lambda)(t,x)=\left(A_{\ell_\lambda}(t,x)+\frac{\nu^2}{8} \iint_{R_{\ell_\lambda}(t,x)}\!\!\!\!v(\tau,\sigma)\d\sigma\d\tau+\frac12\iint_{R_{\ell_\lambda}(t,x)}\!\!\!\!g(\tau,\sigma)\d\sigma\d\tau\right)\chi_{Q_{\ell_\lambda}}(t,x),
		\end{equation*}
		\begin{equation*}
		\Psi_2(v,\lambda)(y)\!=\!\frac 12 \!\int_{-\ell_0}^{y}\!\!\left(\!\!1{+}\max\!\!\left\{\!\!\frac{\left[\dot{v}_0({-}s){-}v_1({-}s){-}\frac{\nu^2}{4}\!\int_{0}^{\lambda(s)}\!\!v(\tau,\tau{-}s)\d\tau{-}\!\int_{0}^{\lambda(s)}\!\!g(\tau,\tau{-}s)\d\tau\right]^2}{2e^{\nu\lambda(s)}\kappa(\lambda(s){-}s)},1\!\right\}\!\!\right)\!\!\!\d s,
		\end{equation*}
	\end{subequations}
		and arguing in the same way.\par 
		We point out that condition \eqref{effe2} is crucial for the validity of Theorem~\ref{finalthm}, as the following example shows: fix $\ell_0>0$ and let $\nu=0$; pick $u_0\equiv0$ in $[0,\ell_0]$, $w\equiv0$ in $[0,+\infty)$, $\kappa\equiv 1/2$ in $[\ell_0,+\infty)$ and consider $u_1(x)=\sqrt{2(\ell_0-x)^\frac 23+1}$ and $f(t,x)=\displaystyle\frac{2}{3(x-\ell_0)^\frac 13 \sqrt{2(x-\ell_0)^\frac 23+1}}$. Notice that $f$ satisfies \eqref{effe} but not \eqref{effe2} and that $f(t,x)=\frac{\d}{\d x}\sqrt{2(x-\ell_0)^\frac 23+1}$. 
		With these data, if $Y>0$ is small enough, the equation for $\lambda$ becomes
		\begin{equation*}
			\begin{cases}
			\dot{\lambda}(y)=1+(\lambda(y)-y-\ell_0)^\frac 23&\text{for a.e. }y\in[-\ell_0,-\ell_0+Y],\\
			\lambda(-\ell_0)=0,
			\end{cases}
		\end{equation*}
		and so, as in the first example of Remark~\ref{failureuniq}, we lose uniqueness of solutions to the coupled problem.
\end{rmk}

We conclude Section~\ref{sec4} proving Proposition~\ref{contractioncoupled}:
	\begin{proof}[Proof of Proposition~\ref{contractioncoupled}]
	During the proof the symbol $C$ is used to denote a constant, which may change from line to line, that does not depend on the value of $Y$.\par 
	By Lemma~\ref{itself} and by the definition of $\mc B_1$ we know that $\Psi$ maps $\mc B$ into itself (for suitable small $Y$), so we only need to show that there exists $Y\in (0,\ell_0)$ for which $\Psi$ is a contraction with respect to the distance $\d$ defined in \eqref{distance}.\medskip
	
	$\!\!\!\!\!\!\mathit{Step\,1.}\:\: \mathit{Lipschitz\:estimates\:on\:\Psi_2.}$\medskip\par
	Fix $(v^1,\lambda^1)$, $(v^2,\lambda^2)\in\mc B$; let us introduce for a.e. $y\in I_Y$ the function $j(y):=|\dot{v}_0(-y)|+|v_1(-y)|+1$ and notice that $j$ is in $L^2(-\ell_0,0)$. For the sake of clarity we also define for $i=1,2$ $\displaystyle\rho_{v^i,\lambda^i}(y):=\dot v_0({-}y)-v_1({-}y)-\frac{\nu^2}{4}\int_{0}^{\lambda^i(y)}\!\!\!\!\!v^i(\tau,\tau{-}y)\d\tau$ and we observe that $|\rho_{v^i,\lambda^i}(y)|\le C j(y)$ for a.e. $y\in I_Y$; then we compute:
	\begingroup
	\allowdisplaybreaks
	\begin{align*}
		&\quad\Vert\Psi_2(v^1,\lambda^1)-\Psi_2(v^2,\lambda^2)\Vert_{C^0(I_Y)}\le\frac 12\int_{{-}\ell_0}^{{-}\ell_0{+}Y}|\Lambda_{v^1,\lambda^1}(s)-\Lambda_{v^2,\lambda^2}(s)|\d s\\
		&\le\frac 12\int_{-\ell_0}^{-\ell_0+Y}\left|\frac{e^{\nu\lambda^2(s)}\kappa(\lambda^2(s){-}s)\left(\rho_{v^1,\lambda^1}(s)\right)^2-e^{\nu\lambda^1(s)}\kappa(\lambda^1(s){-}s)\left(\rho_{v^2,\lambda^2}(s)\right)^2}{2e^{\nu(\lambda^1(s){+}\lambda^2(s))}\kappa(\lambda^1(s){-}s)\kappa(\lambda^2(s){-}s)}\right|\d s\\
		&\le C\int_{-\ell_0}^{-\ell_0+Y}e^{\nu\lambda^2(s)}\kappa(\lambda^2(s){-}s)\left|\left(\rho_{v^1,\lambda^1}(s)\right)^2-\left(\rho_{v^2,\lambda^2}(s)\right)^2\right|\d s\\
		&\quad+C\int_{-\ell_0}^{-\ell_0+Y}\left(\rho_{v^2,\lambda^2}(s)\right)^2\left|e^{\nu\lambda^1(s)}\kappa(\lambda^1(s){-}s)-e^{\nu\lambda^2(s)}\kappa(\lambda^2(s){-}s)\right|\d s\\
		&\le C\left[\int_{-\ell_0}^{-\ell_0+Y}\!\!\!\!\!\!\!\!\!j(s)\left|\int_{0}^{\lambda^1(s)}v^1(\tau,\tau{-}s)\d\tau-\int_{0}^{\lambda^2(s)}v^2(\tau,\tau{-}s)\d\tau\right|\d s+\int_{-\ell_0}^{-\ell_0+Y}\!\!\!\!\!\!\!\!\!j^2(s)|\lambda^2(s)-\lambda^1(s)|\d s\right]\\
		&\le C\left[\int_{-\ell_0}^{-\ell_0+Y}\!\!\!\!\!\!\!\!\!\!\!\!\!\!\!\!j(s)\int_{0}^{T}\!\!\!\!\!\!|v^1{-}v^2|(\tau,\tau{-}s)\d\tau\d s\!+\!\!\int_{-\ell_0}^{-\ell_0+Y}\!\!\!\!\!\!\!\!\!\!\!\!\!j(s)\left|\int_{\lambda^2(s)}^{\lambda^1(s)}\!\!\!\!\!\!\!\!\!\!\!\!\!v^2(\tau,\tau{-}s)\d\tau\right|\!\!\d s\!+\!\!\left(\int_{-\ell_0}^{-\ell_0+Y}\!\!\!\!\!\!\!\!\!\!\!\!\!\!\!\!\!j^2(s)\d s\right)\!\Vert\lambda^2{-}\lambda^1\Vert_{C^0(I_Y)}\right]\\
		&\le C\left[\left(\int_{-\ell_0}^{-\ell_0+Y}\!\!\!\!\!\!\!\!\!\!\!\!\!\!\!\!\!j^2(s)\d s\right)^{\!\!\!\frac 12}\!\!\Vert v^1{-}v^2\Vert_{L^2(Q)}\!+\!\left(\int_{-\ell_0}^{-\ell_0+Y}\!\!\!\!\!\!\!\!\!\!\!\!\!j(s)\d s\right)\Vert\lambda^2{-}\lambda^1\Vert_{C^0(I_Y)}\!+\!\left(\int_{-\ell_0}^{-\ell_0+Y}\!\!\!\!\!\!\!\!\!\!\!\!\!\!\!\!\!j^2(s)\d s\right)\!\Vert\lambda^2{-}\lambda^1\Vert_{C^0(I_Y)}\right]\\
		&\le C\left[\left(\int_{-\ell_0}^{-\ell_0+Y}\!\!\!\!\!\!\!\!\!j^2(s)\d s\right)^{\frac 12}+\int_{-\ell_0}^{-\ell_0+Y}\!\!\!\!\!\!\!\!\!\left(j(s)+j^2(s))\right)\d s\right]\d\left((v^1,\lambda^1),(v^2,\lambda^2)\right).
	\end{align*}
	\endgroup
	Since $j$ belongs to $L^2(-\ell_0,0)$ we deduce that choosing $Y$ small enough we get:
	\begin{equation}\label{firstestimate}
		\Vert\Psi_2(v^1,\lambda^1)-\Psi_2(v^2,\lambda^2)\Vert_{C^0(I_Y)}\le\frac 12\d\left((v^1,\lambda^1),(v^2,\lambda^2)\right).
	\end{equation}
	
	\medskip
	
	$\!\!\!\!\!\!\mathit{Step\,2.}\:\: \mathit{Lipschitz\:estimates\:on\:\Psi_1.}$\medskip\par
	Fix $(v^1,\lambda^1)$, $(v^2,\lambda^2)\in \mc B$ and let us define for the sake of clarity, as in the proof of Lemma~\ref{equicontinuity}, the function $\displaystyle H_{v,\lambda}(t,x):= \left(\iint_{R_\ell(t,x)}v(\tau,\sigma)\d\sigma\d\tau\right)\chi_{Q_\ell}(t,x)$, so that
	\begin{equation*}
	\Vert\Psi_1(v^1,\lambda^1)-\Psi_1(v^2,\lambda^2) \Vert_{L^2(Q)}\!\le\!\Vert A_{\ell^1}\chi_{Q_{\ell^1}}-A_{\ell^2}\chi_{Q_{\ell^2}} \Vert_{L^2(Q)}+\frac{\nu^2}{8}\Vert H_{v^1,\lambda^1}-H_{v^2,\lambda^2} \Vert_{L^2(Q)}.
	\end{equation*}
	We estimate the two norms separately. First of all we rewrite the square of the first term as
	\begin{equation}\label{splitA}
	\begin{aligned}
	\Vert A_{\ell^1}\chi_{Q_{\ell^1}}-A_{\ell^2}\chi_{Q_{\ell^2}} \Vert^2_{L^2(Q)}&=\iint_{Q_{\ell^1}\setminus Q_{\ell^2}}|A_{\ell^1}(t,x)|^2\d x\d t+\iint_{Q_{\ell^2}\setminus Q_{\ell^1}}|A_{\ell^2}(t,x)|^2\d x\d t\\
	&\quad+\iint_{Q_{\ell^1}\cap Q_{\ell^2}}|A_{\ell^1}(t,x)-A_{\ell^2}(t,x)|^2\d x\d t,
	\end{aligned}
	\end{equation}
	and we notice that for every $s\in[\ell_0,\min\{(\omega^1)^{-1}({-}\ell_0{+}Y),(\omega^2)^{-1}({-}\ell_0{+}Y)\} ]$ it holds:
		\begin{equation*}
		\begin{aligned}
		|\omega^1(s)-\omega^2(s)|&=|\lambda^1(\omega^1(s))-\lambda^2(\omega^2(s))-\ell^1(\lambda^1(\omega^1(s)))+\ell^2(\lambda^2(\omega^2(s)))|\\
		&=2|\ell^1(\lambda^1(\omega^1(s)))-\ell^2(\lambda^2(\omega^2(s)))|\\
		&\le2|\ell^1(\lambda^1(\omega^1(s)))-\ell^2(\lambda^2(\omega^1(s)))|\\
		&=2|\lambda^1(\omega^1(s))-\lambda^2(\omega^1(s))|\\
		&\le 2\Vert \lambda^1-\lambda^2\Vert_{C^0(I_Y)}.
		\end{aligned}
		\end{equation*}
		\begin{figure}
			\centering
			\includegraphics[scale=.5]{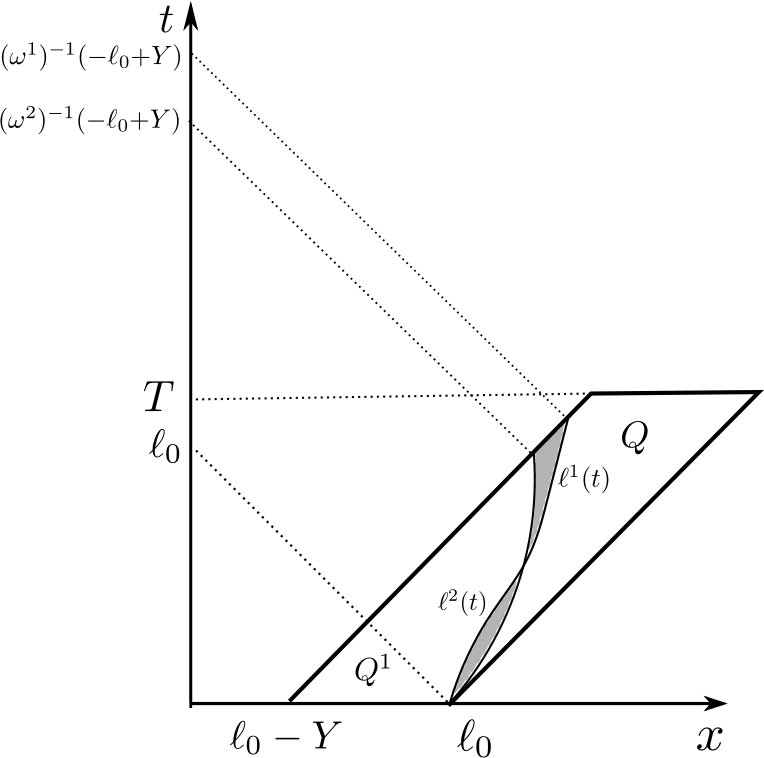}
			\caption{The set $Q^1$ and, in grey, the symmetric difference $Q_{\ell^1}\triangle Q_{\ell^2}$.}\label{Q2}
		\end{figure}\noindent
		This in particular implies (we define $Q^3_{\ell^i}:=Q_{\ell^i}\cap\overline{(\Omega_{\ell^i})'_3}$):
		\begin{subequations}
		\begin{equation}\label{supa}
			|\omega^1(x+t)-\omega^2(x+t)|\le 2\Vert \lambda^1-\lambda^2\Vert_{C^0(I_Y)},\quad\text{if }(t,x)\in Q^3_{\ell^1}\cap Q^3_{\ell^2},
		\end{equation}
		\begin{equation}\label{supb}
			|(t-x)-\omega^1(x+t)|\le 2\Vert \lambda^1-\lambda^2\Vert_{C^0(I_Y)},\quad\text{if }(t,x)\in Q_{\ell^1}\setminus Q_{\ell^2},
		\end{equation}
	\end{subequations}
	and the same holds interchanging the role of $1$ and $2$ in \eqref{supb}.\\  
	Moreover the measure of the symmetric difference of $Q_{\ell^1}$ and $Q_{\ell^2}$ can be estimated as
		\begin{equation}\label{diff}
			|Q_{\ell^1}\triangle Q_{\ell^2}|=\int_{{-}\ell_0}^{{-}\ell_0+Y}|\lambda^1(s)-\lambda^2(s)|\d s\le Y\Vert\lambda^1-\lambda^2\Vert_{C^0(I_Y)}.
		\end{equation}
	For $(t,x)\in Q_{\ell^1}\setminus Q_{\ell^2} $, exploiting the explicit form of $A$ given by \eqref{homsol} and using \eqref{supb}, we deduce:
	\begin{align*}
		|A_{\ell^1}(t,x)|^2&=\frac 14\left|\int_{x-t}^{-\omega^1(x+t)}(v_1(s)-\dot{v}_0(s))\d s\right|^2\le\frac 14 |(t-x)-\omega^1(x+t)|\int_{0}^{\ell_0}|v_1(s)-\dot{v}_0(s)|^2\d s\\
		&\le C\Vert\lambda^1-\lambda^2\Vert_{C^0(I_Y)}.
	\end{align*}
	So, by \eqref{diff}, we get:
	\begin{equation}\label{ablabla}
	\begin{aligned}
	\iint_{Q_{\ell^1}\setminus Q_{\ell^2}}\!\!\!\!\!\!\!\!\!\!\!\!\!\!\!\!|A_{\ell^1}(t,x)|^2\d x\d t+\iint_{Q_{\ell^2}\setminus Q_{\ell^1}}\!\!\!\!\!\!\!\!\!\!\!\!\!\!\!\!|A_{\ell^2}(t,x)|^2\d x\d t&\le C \Vert\lambda^1-\lambda^2\Vert_{C^0(I_Y)}|Q_{\ell^1}\triangle Q_{\ell^2}|\\
	&\le CY\Vert\lambda^1-\lambda^2\Vert_{C^0(I_Y)}^2.
	\end{aligned}
	\end{equation}	
	To estimate the term in the second line in \eqref{splitA} we firstly notice that $A_{\ell^1}-A_{\ell^2}$ vanishes on $Q^1:=Q\cap\overline{\Omega'_1}$ (we remark that $Q^1=\overline{Q_{\ell^1}\setminus Q^3_{\ell^1}}=\overline{Q_{\ell^2}\setminus Q^3_{\ell^2}}$ does not depend on $\ell^i$, see also Figure~\ref{Q2}), while for $(t,x)\in Q^3_{\ell^1}\cap Q^3_{\ell^2}$, using \eqref{supa}, we have:
	\begin{align*}
		|A_{\ell^1}(t,x)-A_{\ell^2}(t,x)|^2&=\frac 14\left|\int_{-\omega^2(x+t)}^{-\omega^1(x+t)}(v_1(s)-\dot{v}_0(s))\d s\right|^2\\
		&\le \frac{|\omega^1(x+t)-\omega^2(x+t)|}{4}\left|\int_{-\omega^2(x+t)}^{-\omega^1(x+t)}|v_1(s)-\dot{v}_0(s)|^2\d s\right|\\
		&\le \frac 12 \Vert\lambda^1-\lambda^2\Vert_{C^0(I_Y)}\left|\int_{-\omega^2(x+t)}^{-\omega^1(x+t)}|v_1(s)-\dot{v}_0(s)|^2\d s\right|.
	\end{align*}
	So we deduce:
	\begin{align*}
		&\iint_{Q_{\ell^1}\cap Q_{\ell^2}}\!\!\!\!\!\!\!\!\!\!\!\!\!\!\!\!|A_{\ell^1}(t,x)-A_{\ell^2}(t,x)|^2\!\d x\d t\le \frac 12 \Vert\lambda^1-\lambda^2\Vert_{C^0(I_Y)}\!\!\iint_{Q^3_{\ell^1}\cap Q^3_{\ell^2}}\left|\int_{-\omega^2(x+t)}^{-\omega^1(x+t)}\!\!\!\!\!|v_1(s)-\dot{v}_0(s)|^2\d s\right|\!\!\d x\d t\\
		&=\frac 12 \Vert\lambda^1-\lambda^2\Vert_{C^0(I_Y)}\int_{\ell_0}^{m(Y)}\int_{(\psi^1)^{-1}(b)\vee(\psi^2)^{-1}(b)}^{\frac{b+Y-\ell_0}{2}}\left|\int_{-\omega^2(b)}^{-\omega^1(b)}\!\!\!\!\!|v_1(s)-\dot{v}_0(s)|^2\d s\right|\d a\d b=:(\dagger),
	\end{align*}
	where we performed the change of variables $\begin{cases}
	a=t,\\
	b=x+t
	\end{cases}$, denoted by $m(Y)$ the minimum between  $(\omega^1)^{-1}({-}\ell_0{+}Y)$ and $(\omega^2)^{-1}({-}\ell_0{+}Y)$ and used the symbol $\vee$ to denote the maximum between two numbers. We continue the estimate using Fubini's Theorem:
	\begingroup
	\allowdisplaybreaks
	\begin{align*}
		(\dagger)&\le\frac 12 \Vert\lambda^1-\lambda^2\Vert_{C^0(I_Y)}\int_{\ell_0}^{m(Y)}Y\left|\int_{-\omega^2(b)}^{-\omega^1(b)}\!\!\!\!\!|v_1(s)-\dot{v}_0(s)|^2\d s\right|\d b\\
		&=\frac{Y}{2} \Vert\lambda^1-\lambda^2\Vert_{C^0(I_Y)}\int_{\ell_0{-}Y}^{\ell_0}\!\!\!\!\!|v_1(s)-\dot{v}_0(s)|^2\left|\int_{(\omega^2)^{-1}(-s)}^{(\omega^1)^{-1}(-s)}\d b\right|\d s\\
		&=\frac{Y}{2} \Vert\lambda^1-\lambda^2\Vert_{C^0(I_Y)}\int_{\ell_0{-}Y}^{\ell_0}\!\!\!\!\!|v_1(s)-\dot{v}_0(s)|^2|\lambda^1({-}s){-}\lambda^2({-}s){+}\ell^1(\lambda^1({-}s)){-}\ell^2(\lambda^2({-}s))|\d s\\
		&=Y\Vert\lambda^1-\lambda^2\Vert_{C^0(I_Y)}\int_{\ell_0{-}Y}^{\ell_0}\!\!\!\!\!|v_1(s)-\dot{v}_0(s)|^2|\lambda^1({-}s){-}\lambda^2({-}s)|\d s\\
		&\le CY\Vert\lambda^1-\lambda^2\Vert_{C^0(I_Y)}^2.
	\end{align*}
	\endgroup
	Combining the previous estimate with \eqref{ablabla} and \eqref{splitA} we get:
	\begin{equation}\label{estimateA}
		\Vert A_{\ell^1}\chi_{Q_{\ell^1}}-A_{\ell^2}\chi_{Q_{\ell^2}} \Vert_{L^2(Q)}\le C\sqrt{Y}\Vert\lambda^1-\lambda^2\Vert_{C^0(I_Y)}
	\end{equation}
	Concerning $\Vert H_{v^1,\lambda^1}-H_{v^2,\lambda^2} \Vert_{L^2(Q)}$ we split its square as in \eqref{splitA}:
	\begin{equation}\label{splitH}
	\begin{aligned}
	\Vert  H_{v^1,\lambda^1}-H_{v^2,\lambda^2} \Vert^2_{L^2(Q)}&=\iint_{Q_{\ell^1}\setminus Q_{\ell^2}}\left|\iint_{R_{\ell^1}(t,x)}\!\!\!\!\!\!\!\!\!\!\!\!\!\!v^1(\tau,\sigma)\d\sigma\d\tau\right|^2\d x\d t\\
	&\quad+\iint_{Q_{\ell^2}\setminus Q_{\ell^1}}\left|\iint_{R_{\ell^2}(t,x)}\!\!\!\!\!\!\!\!\!\!\!\!\!\!v^2(\tau,\sigma)\d\sigma\d\tau\right|^2\d x\d t\\
	&\quad+\iint_{Q_{\ell^1}\cap Q_{\ell^2}}\left|\iint_{R_{\ell^1}(t,x)}\!\!\!\!\!\!\!\!\!\!\!\!\!\!v^1(\tau,\sigma)\d\sigma\d\tau-\iint_{R_{\ell^2}(t,x)}\!\!\!\!\!\!\!\!\!\!\!\!\!\!v^2(\tau,\sigma)\d\sigma\d\tau\right|^2\d x\d t,
	\end{aligned}
	\end{equation}
	and we denote by $\mc I$, $\mc{II}$ and $\mc{III}$ the expressions in the first, second and third line of \eqref{splitH}, respectively. Exploiting \eqref{supb} and \eqref{diff} we get:
	\begingroup
	\allowdisplaybreaks
	\begin{align*}
		\mc I+\mc{II}&\le\iint_{Q_{\ell^1}\setminus Q_{\ell^2}}M^2|R_{\ell^1}(t,x)|^2\d x\d t+\iint_{Q_{\ell^2}\setminus Q_{\ell^1}}M^2|R_{\ell^2}(t,x)|^2\d x\d t\\&\le\iint_{Q_{\ell^1}\setminus Q_{\ell^2}}M^2T^2|(t{-}x)-\omega^1(x{+}t)|^2\d x\d t+\iint_{Q_{\ell^2}\setminus Q_{\ell^1}}M^2T^2|(t{-}x)-\omega^2(x{+}t)|^2\d x\d t\\
		&\le 4M^2T^2\Vert \lambda^1-\lambda^2\Vert^2_{C^0(I_Y)}|Q_{\ell^1}\triangle Q_{\ell^2}|\le 8M^2T^3Y\Vert \lambda^1-\lambda^2\Vert^2_{C^0(I_Y)},
	\end{align*}
	\endgroup
	while we estimate $\mc{III}$ using again \eqref{supa}:
	\begingroup
	\allowdisplaybreaks
	\begin{align*}
		\mc{III}&\le\iint_{Q^1}\left(\iint_{R(t,x)}|v^1-v^2|(\tau,\sigma)\d\sigma\d\tau\right)^2\d x\d t\\
		&\quad+\iint_{Q^3_{\ell^1}\cap Q^3_{\ell^2}}\left(\iint_{R_{\ell^1}(t,x)}\!\!\!\!\!\!\!\!\!\!\!\!\!\!|v^1-v^2|(\tau,\sigma)\d\sigma\d\tau+\iint_{R_{\ell^1}(t,x)\triangle R_{\ell^2}(t,x)}\!\!\!\!\!\!\!\!\!\!\!\!\!\!\!\!\!\!\!\!\!\!\!\!\!\!|v^2(\tau,\sigma)|\d\sigma\d\tau\right)^2\d x\d t\\
		&\le C\left[|Q|\Vert v^1-v^2\Vert^2_{L^2(Q)}+\iint_{Q^3_{\ell^1}\cap Q^3_{\ell^2}}\left(|Q|\Vert v^1-v^2\Vert^2_{L^2(Q)}+|R_{\ell^1}(t,x)\triangle R_{\ell^2}(t,x)|^2\right)\d x\d t\right]\\
		&\le C\left[|Q|\Vert v^1-v^2\Vert^2_{L^2(Q)}+\iint_{Q^3_{\ell^1}\cap Q^3_{\ell^2}}\left(\Vert v^1-v^2\Vert^2_{L^2(Q)}+|\omega^1(x+t)-\omega^2(x+t)|^2\right)\d x\d t\right]\\
		&\le C\left[|Q|\Vert v^1-v^2\Vert^2_{L^2(Q)}+|Q^3_{\ell^1}\cap Q^3_{\ell^2}|\left(\Vert v^1-v^2\Vert^2_{L^2(Q)}+\Vert\lambda^1-\lambda^2\Vert^2_{C^0(I_Y)}\right)\right]\\
		&\le CY\d\left((v^1,\lambda^1),(v^2,\lambda^2)\right)^2.
	\end{align*}
	\endgroup	
	So we infer:
	\begin{align}\label{estimateH}
		\Vert  H_{v^1,\lambda^1}-H_{v^2,\lambda^2} \Vert^2_{L^2(Q)}=\mc I+\mc{II}+\mc{III}\le CY\d\left((v^1,\lambda^1),(v^2,\lambda^2)\right)^2.
	\end{align}	
	Using \eqref{estimateA} and \eqref{estimateH} and choosing $Y$ small enough we finally deduce:
	\begin{equation}\label{secondestimate}
		\Vert\Psi_1(v^1,\lambda^1)-\Psi_1(v^2,\lambda^2) \Vert_{L^2(Q)}\le \frac 12\d\left((v^1,\lambda^1),(v^2,\lambda^2)\right).
	\end{equation}
	
	\medskip
	
	$\!\!\!\!\!\!\mathit{Step\,3.}\:\: \mathit{\Psi\colon\mc B\to \mc B\:is\:a\:contraction.}$\medskip\par
	Combining estimates \eqref{firstestimate} and \eqref{secondestimate} we obtain:
	\begin{align*}
		\d\left(\Psi(v^1,\lambda^1),\Psi(v^2,\lambda^2)\right)&=\max\{\Vert\Psi_1(v^1,\lambda^1)-\Psi_1(v^2,\lambda^2) \Vert_{L^2(Q)}, \Vert \Psi_2(v^1,\lambda^1)-\Psi_2(v^2,\lambda^2)\Vert_{C^0(I_Y)}\}\\
		&\le\frac 12\d\left((v^1,\lambda^1),(v^2,\lambda^2)\right).
	\end{align*}
	This shows that for a suitable choice of $Y\in(0,\ell_0)$ the operator $\Psi$ is a contraction in $(\mc B,\d)$, and we conclude. 	
\end{proof}
	\par
\appendix
\section{Chain rule and Leibniz differentiation rule}
In this Appendix we gather some results about the Chain rule and the Leibniz differentiation rule under low regularity assumptions. These results have been used throughout the paper and they are of some interest on their own.\par
For the sake of brevity we assume that in all the statements the function $\varphi$ is nondecreasing (or strictly increasing), although they are still valid if $\varphi$ is nonincreasing (or strictly decreasing), with little changes in the proofs. 
	\begin{lemma}[\textbf{Change of variables formula}]\label{areaformula}
	Let $\varphi\in C^{0,1}([a,b])$ be nondecreasing. Then for every nonnegative and measurable function $g$ on $[\varphi(a),\varphi(b)]$ (and hence for every $g\in L^1(\varphi(a),\varphi(b)$) it holds
	\begin{equation}\label{formulaa}
	\int_{\varphi(a)}^{\varphi(b)}g(y)\d y=\int_{a}^{b}g(\varphi(s))\dot{\varphi}(s)\d s.
	\end{equation}
\end{lemma}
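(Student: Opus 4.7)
The plan is to prove the formula by the standard measure-theoretic ladder: verify it first for indicators of intervals (where the fundamental theorem of calculus for Lipschitz functions does all the work), extend to indicators of measurable sets by approximation, then to nonnegative simple functions by linearity, and finally to all nonnegative measurable $g$ by monotone convergence.

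First I would establish the formula for $g=\chi_{[c,d]}$ with $\varphi(a)\le c\le d\le\varphi(b)$. Since $\varphi$ is continuous and nondecreasing, one can choose $s_c:=\inf\{s\in[a,b]\mid\varphi(s)\ge c\}$ and $s_d:=\sup\{s\in[a,b]\mid\varphi(s)\le d\}$; continuity yields $\varphi(s_c)=c$ and $\varphi(s_d)=d$, and monotonicity gives $\chi_{[c,d]}(\varphi(s))=\chi_{[s_c,s_d]}(s)$ (outside the possibly nontrivial intervals where $\varphi$ is constant at the values $c$ or $d$, on which $\dot{\varphi}=0$ a.e.\ so the discrepancy is irrelevant). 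Since $\varphi$ is Lipschitz, hence absolutely continuous, the fundamental theorem of calculus gives
\begin{equation*}
\int_a^b\chi_{[c,d]}(\varphi(s))\dot{\varphi}(s)\d s=\int_{s_c}^{s_d}\dot{\varphi}(s)\d s=\varphi(s_d)-\varphi(s_c)=d-c=\int_{\varphi(a)}^{\varphi(b)}\chi_{[c,d]}(y)\d y,
\end{equation*}
so \eqref{formulaa} holds on closed subintervals.

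Next I would extend to indicators of Lebesgue measurable sets $E\subseteq[\varphi(a),\varphi(b)]$ by measure regularity: writing every open subset of $[\varphi(a),\varphi(b)]$ as a countable disjoint union of open intervals and applying countable additivity plus monotone convergence (whose hypotheses are satisfied because $\dot{\varphi}\ge 0$), the formula passes to open sets, hence, by complementation and an inner/outer approximation, to all Borel and Lebesgue measurable sets. Null sets cause no trouble: a Lipschitz $\varphi$ enjoys Luzin's N-property, so if $g=\tilde g$ a.e.\ on $[\varphi(a),\varphi(b)]$ then $g\circ\varphi=\tilde g\circ\varphi$ a.e.\ on $\{\dot{\varphi}>0\}$, and on $\{\dot{\varphi}=0\}$ the integrand on the right-hand side vanishes anyway. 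Linearity then yields \eqref{formulaa} for all nonnegative simple functions $g$.

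Finally, for an arbitrary nonnegative measurable $g$ I would pick an increasing sequence of nonnegative simple functions $g_n\nearrow g$ pointwise on $[\varphi(a),\varphi(b)]$; then $g_n\circ\varphi\nearrow g\circ\varphi$ pointwise, the integrands $g_n(\varphi(\cdot))\dot{\varphi}(\cdot)$ form an a.e.\ increasing sequence of nonnegative functions, and two applications of the monotone convergence theorem on the two sides of the identity for $g_n$ give \eqref{formulaa} for $g$. The $g\in L^1$ case then follows by decomposing $g=g^+-g^-$ and noting that both integrals in \eqref{formulaa} are finite for $g^\pm$ since they coincide. The main obstacle, and it is only mildly technical, is the measurability and null-set bookkeeping for $g\circ\varphi$, resolved as indicated by Luzin's N-property of Lipschitz maps together with the vanishing of $\dot{\varphi}$ at points where $\varphi$ is locally constant.
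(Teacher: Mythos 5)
Your proof is correct, but it follows a genuinely different route from the paper's. The paper disposes of the lemma in a few lines by quoting the Area Formula for Lipschitz maps (Krantz--Parks, Corollary~5.1.13), which gives $\int_{\varphi(a)}^{\varphi(b)}g(y)\,\#\varphi^{-1}(\{y\})\d y=\int_a^b g(\varphi(s))\dot\varphi(s)\d s$, and then observes that the multiplicity $\#\varphi^{-1}(\{y\})$ equals $1$ for a.e.\ $y$: each level set of a continuous nondecreasing map is a point or a closed interval, and only countably many can be intervals (or, taking $g\equiv 1$, the multiplicity has finite integral). You instead rebuild the identity from scratch by the standard ladder: the fundamental theorem of calculus for absolutely continuous functions on indicators of intervals, extension to measurable sets by regularity, then simple functions and monotone convergence. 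Your approach is longer but entirely self-contained and elementary, while the paper's trades length for reliance on a cited black box. Two bookkeeping points in your write-up deserve care. First, what you need for the null-set step is not Luzin's property~(N) (which concerns forward images of null sets) but the preimage statement that $\dot\varphi=0$ a.e.\ on $\varphi^{-1}(N)$ when $|N|=0$; this is exactly the paper's Corollary~\ref{nullsets}, and it does follow within your ladder once the formula is known for open sets (cover $N$ by an open $U$ of small measure and use $\int_a^b\chi_U(\varphi)\dot\varphi=|U|$), so your argument closes, but the attribution should be corrected. Second, for Lebesgue-measurable $g$ the composition $g\circ\varphi$ need not be measurable; one must pass to a Borel representative, which is precisely the content of the paper's Remark~\ref{precisini}. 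Neither point is a gap in the idea, only in the phrasing.
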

\begin{rmk}\label{precisini}
	In general the expression $g(\varphi(s))\dot{\varphi}(s)$ in \eqref{formulaa} has to be meant replacing $g$ by a Borel function $\tilde{g}$ equal to $g$ a.e. in $[\varphi(a),\varphi(b)]$ and finite everywhere (if $g$ is finite a.e.); in the particular case in which $\dot{\varphi}(t)>0$ for a.e. $t\in[a,b]$ that expression is meaningful without modifications on sets of measure zero (see Corollary \ref{nullsets}).
\end{rmk}
\begin{proof}[Proof of Lemma~\ref{areaformula}]
	If $\varphi$ is strictly increasing, hence injective, the result is well known. If not, by the Area Formula for Lipschitz maps (see~\cite{Krantz}, Corollary 5.1.13) we have
	\begin{equation}\label{formula}
	\int_{\varphi(a)}^{\varphi(b)}g(y)\#\varphi^{-1}(\{y\})\d y=\int_{a}^{b}g(\varphi(s))\dot{\varphi}(s)\d s.
	\end{equation}
	We conclude if we prove that $\#\varphi^{-1}(\{y\})=1$ for a.e. $y\in[\varphi(a),\varphi(b)]$.\par
	Since $\varphi$ is nondecreasing and continuous, for every $y\in[\varphi(a),\varphi(b)]$ the set $\varphi^{-1}(\{y\})$ can be either a singleton either a closed interval, so $\#\varphi^{-1}(\{y\})\in\{1,+\infty\}$. Taking $g\equiv 1$ in \eqref{formula} we deduce
	\begin{equation*}
	+\infty>\varphi(b)-\varphi(a)=\int_{a}^{b}\dot{\varphi}(s)\d s=	\int_{\varphi(a)}^{\varphi(b)}\#\varphi^{-1}(\{y\})\d y.
	\end{equation*}
	This yields $\#\varphi^{-1}(\{y\})<+\infty$ for a.e. $y\in[\varphi(a),\varphi(b)]$ and so necessarily $\#\varphi^{-1}(\{y\})=1$ a.e..\par
	As an alternative proof we notice that the set $\{y\in[\varphi(a),\varphi(b)]\mid\#\varphi^{-1}(\{y\})=+\infty\}$ is in bijection with a subset of rational numbers, so it is countable and hence of measure zero.
\end{proof}
\begin{rmk}\label{absolutecont}
	Formula \eqref{formulaa} still holds true only assuming that $\varphi$ is absolutely continuous on $[a,b]$ (and nondecreasing), see Theorem~7.26 in~\cite{Rud}. This ensures that every result in this Appendix is valid replacing the assumption $\varphi\in C^{0,1}([a,b])$ by $\varphi\in AC([a,b])$; indeed the reader can easily check that the only ingredient needed to carry out all the proofs is \eqref{formulaa}.
\end{rmk}
\begin{cor}\label{nullsets}
	Let $\varphi\in C^{0,1}([a,b])$ be nondecreasing and let $N\subset[\varphi(a),\varphi(b)]$ be a set of measure zero. Then the set $M=\{t\in\varphi^{-1}(N)\mid\dot{\varphi}(t)\text{ exists and }\dot{\varphi}(t)>0\}$ has measure zero as well. In particular, if $\dot{\varphi}(t)>0$ for a.e. $t\in[a,b]$, then $\varphi^{-1}$ maps sets of measure zero in sets of measure zero.
\end{cor}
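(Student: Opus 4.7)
The strategy is to apply the change of variables formula from Lemma~\ref{areaformula} with $g$ being (essentially) the characteristic function of $N$. The only subtlety, as highlighted in Remark~\ref{precisini}, is that $N$ is just Lebesgue measurable, so to make the right-hand side of \eqref{formulaa} meaningful we first need to pass to a Borel representative.

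The plan is as follows. First, by inner/outer regularity of Lebesgue measure, pick a Borel set $B \subseteq [\varphi(a),\varphi(b)]$ with $B \supseteq N$ and $|B|=0$; then $\chi_B$ is a Borel function, equal $1$ on $N$. Applying \eqref{formulaa} to the nonnegative measurable function $g=\chi_B$ gives
\begin{equation*}
0 = \int_{\varphi(a)}^{\varphi(b)}\chi_B(y)\d y = \int_a^b \chi_B(\varphi(s))\dot\varphi(s)\d s,
\end{equation*}
where the integrand on the right is well defined pointwise since $\chi_B\circ\varphi$ is Borel.

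Next I would restrict to the set $M$. For every $s\in M$ we have $\varphi(s)\in N\subseteq B$ and $\dot\varphi(s)>0$, so $\chi_B(\varphi(s))\dot\varphi(s)=\dot\varphi(s)>0$ on $M$. Consequently
\begin{equation*}
0 = \int_a^b \chi_B(\varphi(s))\dot\varphi(s)\d s \;\ge\; \int_M \dot\varphi(s)\d s \;\ge\; 0,
\end{equation*}
so $\int_M \dot\varphi(s)\d s=0$. Since $\dot\varphi>0$ on $M$, this forces $|M|=0$, which is the first assertion.

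For the ``in particular'' part, under the extra assumption $\dot\varphi>0$ a.e.\ on $[a,b]$, we decompose
\begin{equation*}
\varphi^{-1}(N) \subseteq M \cup \{t\in[a,b]\mid \dot\varphi(t)\text{ does not exist, or }\dot\varphi(t)\le 0\}.
\end{equation*}
By Rademacher's theorem the first exceptional set has measure zero, and by assumption the second does too; combined with $|M|=0$ from the previous step this yields $|\varphi^{-1}(N)|=0$. No step is really an obstacle here; the only point that requires care is the Borel representative, needed so that the composition with $\varphi$ is unambiguously a measurable function rather than only defined up to null sets in the target (which would be useless, since $\varphi$ may well map positive-measure sets into $N$ before we know the conclusion).
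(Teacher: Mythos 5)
Your proof is correct and follows essentially the same route as the paper: apply the change of variables formula of Lemma~\ref{areaformula} to the characteristic function of (a Borel null superset of) $N$, obtain $\int_M\dot\varphi=0$, and conclude from $\dot\varphi>0$ on $M$. Your explicit passage to a Borel representative is exactly the convention the paper stipulates in Remark~\ref{precisini}, and the handling of the ``in particular'' part is the obvious one left implicit in the paper.
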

\begin{proof}
	Let $N\subset[\varphi(a),\varphi(b)]$ be a set of measure zero; then by Lemma~\ref{areaformula}
	\begin{equation*}
		0=\int_{\varphi(a)}^{\varphi(b)}\chi_N(y)\d y=\int_{a}^{b}\chi_N(\varphi(s))\dot{\varphi}(s)\d s=\int_{\varphi^{-1}(N)}^{}\dot{\varphi}(s)\d s=\int_{M}^{}\dot{\varphi}(s)\d s.
	\end{equation*}
	Since by construction $\dot{\varphi}(t)>0$ for every $t\in M$, we deduce that the set $M$ has measure zero.
\end{proof}
\begin{cor}\label{absinv}
	Let $\varphi\in C^{0,1}([a,b])$ be a strictly increasing function such that $\dot{\varphi}(t)>0$ for a.e. $t\in[a,b]$. Then $\varphi^{-1}$ belongs to $AC([\varphi(a),\varphi(b)])$and $\displaystyle\frac{\d}{\d x}(\varphi^{-1})(x)=\frac{1}{\dot{\varphi}(\varphi^{-1}(x))}$ for a.e. $x\in[\varphi(a),\varphi(b)]$.
\end{cor}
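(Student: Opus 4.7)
My plan is to bypass the Banach--Zarecki route and produce $\varphi^{-1}$ explicitly as the indefinite integral of the conjectured derivative. Concretely, I would set
\begin{equation*}
F(x) := a + \int_{\varphi(a)}^{x} \frac{1}{\dot{\varphi}(\varphi^{-1}(y))}\,\d y, \qquad x\in[\varphi(a),\varphi(b)],
\end{equation*}
and show that $F=\varphi^{-1}$. Once this identification is established, absolute continuity of $\varphi^{-1}$ follows immediately from the fact that $F$ is an indefinite integral of an $L^1$ function, and the Lebesgue differentiation theorem identifies its derivative with the claimed formula.

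The first preparatory step is to check that the integrand is meaningful almost everywhere and is measurable. Taking a Borel representative of $\dot{\varphi}$, the composition $\dot{\varphi}\circ\varphi^{-1}$ is Borel measurable since $\varphi^{-1}$ is continuous. The set $N:=\{s\in[a,b]\mid \dot{\varphi}(s)\text{ does not exist or equals }0\}$ is negligible by assumption, and Corollary~\ref{nullsets} forces $\varphi^{-1}(N)$ to be negligible as well; thus $\dot{\varphi}(\varphi^{-1}(y))>0$ for a.e. $y\in[\varphi(a),\varphi(b)]$.

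The heart of the argument is then a single application of Lemma~\ref{areaformula} with $g(y)=1/\dot{\varphi}(\varphi^{-1}(y))\geq 0$. Since $\varphi^{-1}\circ\varphi=\mathrm{id}$, the integrand $g(\varphi(s))\dot{\varphi}(s)$ collapses pointwise to $\dot{\varphi}(s)/\dot{\varphi}(s)=1$ for every $s\notin N$, and in the regime $\dot\varphi>0$ a.e. Remark~\ref{precisini} assures this simplification requires no modification on null sets. Applying \eqref{formulaa} on $[a,t]$ (the restriction of $\varphi$ is still Lipschitz and nondecreasing) gives, for every $t\in[a,b]$,
\begin{equation*}
F(\varphi(t)) - a \,=\, \int_{\varphi(a)}^{\varphi(t)} g(y)\,\d y \,=\, \int_a^t 1\,\d s \,=\, t-a,
\end{equation*}
so that $F(\varphi(t))=t$. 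Since $\varphi\colon[a,b]\to[\varphi(a),\varphi(b)]$ is a continuous bijection, this identity propagates to $F=\varphi^{-1}$ on the entire interval.

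Taking $t=b$ in particular shows $g\in L^1(\varphi(a),\varphi(b))$, so $F$ is absolutely continuous and, by the Lebesgue differentiation theorem, $F'(x)=g(x)$ for a.e. $x$. This yields both assertions: $\varphi^{-1}\in AC([\varphi(a),\varphi(b)])$ and $(\varphi^{-1})'(x)=1/\dot{\varphi}(\varphi^{-1}(x))$ a.e. The only point requiring care is the null-set bookkeeping that makes $1/\dot\varphi\circ\varphi^{-1}$ meaningful almost everywhere, and this is handled cleanly by Corollary~\ref{nullsets}; no further obstacle is anticipated.
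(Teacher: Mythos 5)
Your proposal is correct and follows essentially the same route as the paper: both express $\varphi^{-1}$ as the indefinite integral of $1/(\dot\varphi\circ\varphi^{-1})$ via a single application of Lemma~\ref{areaformula} (writing $t-a=\int_a^t \dot\varphi(s)/\dot\varphi(s)\,\d s=\int_{\varphi(a)}^{\varphi(t)}\d y/\dot\varphi(\varphi^{-1}(y))$), and deduce absolute continuity and the derivative formula from that representation. Your extra bookkeeping with Corollary~\ref{nullsets} is a harmless elaboration of what the paper leaves to Remark~\ref{precisini}.
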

\begin{proof}
	Firstly we notice that Lemma~\ref{areaformula} ensures that $\displaystyle\frac{1}{\dot{\varphi}\circ\varphi^{-1}}$ belongs to $L^1(\varphi(a),\varphi(b)$:
	\begin{equation*}
		\int_{\varphi(a)}^{\varphi(b)}\frac{1}{\dot{\varphi}(\varphi^{-1}(y))}\d y=\int_{a}^{b}\frac{1}{\dot{\varphi}(s)}\dot{\varphi}(s)\d s=b-a<+\infty.
	\end{equation*}
	Moreover for every $x\in[\varphi(a),\varphi(b)]$
	\begin{equation*}
		\varphi^{-1}(x)-\varphi^{-1}(\varphi(a))=\int_{a}^{\varphi^{-1}(x)}\d s=\int_{a}^{\varphi^{-1}(x)}\frac{\dot{\varphi}(s)}{\dot{\varphi}(s)}\d s=\int_{\varphi(a)}^{x}\frac{1}{\dot{\varphi}(\varphi^{-1}(y))}\d y,
	\end{equation*}
	so we conclude.
\end{proof}
\begin{lemma}[\textbf{Chain rule}]
	Let $\varphi\in C^{0,1}([a,b])$ be nondecreasing and let $\phi\in AC([\varphi(a),\varphi (b)])$. Then $\phi\circ\varphi$ belongs to $AC([a,b])$ and $\frac{\d}{\d t}(\phi\circ\varphi)(t)=\dot \phi(\varphi(t))\dot{\varphi}(t)$ for a.e. $t\in[a,b]$, where the right-hand side is meant as in Remark~\ref{precisini}. 
\end{lemma}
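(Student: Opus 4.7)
The plan is to prove the statement in two steps: first that $\phi\circ\varphi$ is absolutely continuous on $[a,b]$, and then to derive the derivative formula by combining the fundamental theorem of calculus for $\phi$ with the change-of-variables formula from Lemma~\ref{areaformula}.

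For the absolute continuity, let $L$ denote the Lipschitz constant of $\varphi$ and fix $\varepsilon>0$. By absolute continuity of $\phi$ on $[\varphi(a),\varphi(b)]$ there exists $\eta>0$ such that $\sum_j|\phi(d_j)-\phi(c_j)|<\varepsilon$ for every finite family of pairwise non-overlapping intervals $(c_j,d_j)\subset[\varphi(a),\varphi(b)]$ with $\sum_j(d_j-c_j)<\eta$. Setting $\delta:=\eta/L$ and taking disjoint intervals $(a_i,b_i)\subset[a,b]$ with $\sum_i(b_i-a_i)<\delta$, monotonicity of $\varphi$ ensures that the intervals $(\varphi(a_i),\varphi(b_i))$ have pairwise disjoint interiors, while the Lipschitz bound yields $\sum_i(\varphi(b_i)-\varphi(a_i))\le L\sum_i(b_i-a_i)<\eta$. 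Hence $\sum_i|\phi(\varphi(b_i))-\phi(\varphi(a_i))|<\varepsilon$, so $\phi\circ\varphi\in AC([a,b])$.

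For the derivative formula, fix $t\in[a,b]$. Since $\phi\in AC([\varphi(a),\varphi(b)])$, the fundamental theorem of calculus gives
\begin{equation*}
\phi(\varphi(t))-\phi(\varphi(a))=\int_{\varphi(a)}^{\varphi(t)}\dot\phi(y)\d y.
\end{equation*}
Applying Lemma~\ref{areaformula} to the restriction of $\varphi$ to $[a,t]$ (still Lipschitz and nondecreasing) with $g=\dot\phi\in L^1(\varphi(a),\varphi(t))$ transforms the right-hand side into $\int_a^t\dot\phi(\varphi(s))\dot\varphi(s)\d s$. Since both sides, viewed as functions of $t$, are absolutely continuous on $[a,b]$ and agree pointwise, the a.e.-derivative of an AC function yields $\frac{\d}{\d t}(\phi\circ\varphi)(t)=\dot\phi(\varphi(t))\dot\varphi(t)$ for a.e. $t\in[a,b]$.

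The only delicate point, which I expect to be the main obstacle to articulate precisely, is the meaning of the composition $\dot\phi(\varphi(s))$: since $\dot\phi$ is only defined outside a null set $N\subset[\varphi(a),\varphi(b)]$, and its preimage $\varphi^{-1}(N)$ need not be negligible in $[a,b]$. This is exactly what Remark~\ref{precisini} and Corollary~\ref{nullsets} address: after fixing any Borel representative $\tilde{\dot\phi}$ of $\dot\phi$, the set where the product $\tilde{\dot\phi}(\varphi(s))\dot\varphi(s)$ depends on the chosen representative is contained in $\{s\in\varphi^{-1}(N):\dot\varphi(s)>0\}$, which has measure zero by Corollary~\ref{nullsets}, while on $\{\dot\varphi=0\}$ the product is unambiguously zero. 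Consequently the integrand is a well-defined element of $L^1([a,t])$, its a.e.\ equivalence class is independent of the choice of representative, and the identity above is meaningful as stated.
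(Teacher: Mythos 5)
Your proof is correct and takes essentially the same route as the paper: the fundamental theorem of calculus for $\phi$ combined with Lemma~\ref{areaformula} gives $\phi(\varphi(t))=\phi(\varphi(a))+\int_a^t\dot\phi(\varphi(s))\dot{\varphi}(s)\d s$ with an $L^1$ integrand (interpreted as in Remark~\ref{precisini} via Corollary~\ref{nullsets}), from which the conclusion follows by a.e.\ differentiation. Your separate $\varepsilon$--$\delta$ argument for the absolute continuity of $\phi\circ\varphi$ is correct but redundant, since the integral representation already yields it, which is how the paper concludes.
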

\begin{proof}
	Since $\phi\in AC([\varphi(a),\varphi (b)])$, Lemma~\ref{areaformula} ensures that $\dot \phi(\varphi(\cdot))\dot{\varphi}(\cdot)$ belongs to $L^1(a,b)$. Moreover  for every $t\in[a,b]$
	\begin{equation*}
	\phi(\varphi(t))-\phi(\varphi(a))=\int_{\varphi(a)}^{\varphi(t)}\dot \phi(y)\d y=\int_{a}^{t}\dot \phi(\varphi(s))\dot{\varphi}(s)\d s,
	\end{equation*}
	so we conclude.
\end{proof}
\begin{rmk}\label{sobolev}
	With a similar proof one can show that if $\phi\in W^{1,p}(\varphi(a),\varphi(b))$ for $p\in[1,+\infty]$, then $\phi\circ\varphi\in W^{1,p}(a,b)$ and the same formula for the derivative holds. In contrast with Remark~\ref{absolutecont}, for the validity of this fact we cannot replace $\varphi\in C^{0,1}([a,b])$ by $\varphi\in AC([a,b])$. 
\end{rmk}
\begin{thm}[\textbf{Leibniz differentiation rule}]\label{differentiation}
	Let $\varphi\in C^{0,1}([0,T])$ be nondecreasing and let $a\le\varphi(0)$. Consider the set $\Omega_T^\varphi:=\{(t,y)\mid 0\le t\le T,\,a\le y\le\varphi(t)\}$ and let $f\colon\Omega_T^\varphi\to\erre$ be a measurable function such that:
	\begin{itemize}
		\item[a)] for every $t\in[0,T]$ it holds $f(t,\cdot)\in L^1(a,\varphi(t))$,
		\item[b)] for a.e. $y\in[a,\varphi(T)]$ it holds $f(\cdot,y)\in AC(I_y)$, where $I_y=\{t\in[0,T]\mid y\le \varphi(t) \}$,
		\item[c)] the partial derivative $\displaystyle\frac{\de f}{\de t}(t,y):=\lim\limits_{h\to 0}\frac{f(t+h,y)-f(t,y)}{h}$ (which for a.e. $y\in[a,\varphi(T)]$ is well defined for a.e. $t\in I_y$) is summable in $\Omega_T^\varphi$.
	\end{itemize}
	Then the function $F(t):=\displaystyle\int_{a}^{\varphi(t)}f(t,y)\d y$ belongs to $AC([0,T])$ and moreover for a.e. $t\in[0,T]$
	\begin{equation}\label{leibniz}
	\dot F(t)=f(t,\varphi(t))\dot{\varphi}(t)+\int_{a}^{\varphi(t)}\frac{\de f}{\de t}(t,y)\d y.
	\end{equation}
\end{thm}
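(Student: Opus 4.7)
The plan is to establish, for every $0\le s \le t\le T$, the integral identity
\begin{equation*}
F(t) - F(s) = \int_s^t f(\tau, \varphi(\tau))\dot\varphi(\tau)\,\d\tau + \int_s^t\!\!\int_a^{\varphi(\tau)} \frac{\partial f}{\partial\tau}(\tau, y)\,\d y\,\d\tau,
\end{equation*}
call it $(\star)$, from which the absolute continuity of $F$ and the differentiation formula \eqref{leibniz} will both follow by Lebesgue's differentiation theorem, once the $L^1$-integrability on $[0,T]$ of the two integrands on the right has been verified.

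To prove $(\star)$ I would first split
\begin{equation*}
F(t) - F(s) = \underbrace{\int_a^{\varphi(s)} \bigl[f(t,y) - f(s,y)\bigr]\,\d y}_{=:A} \;+\; \underbrace{\int_{\varphi(s)}^{\varphi(t)} f(t,y)\,\d y}_{=:B}.
\end{equation*}
For the term $A$, assumption (b) allows me to write $f(t,y)-f(s,y)=\int_s^t \frac{\partial f}{\partial\tau}(\tau,y)\,\d\tau$ for a.e.\ $y \in [a,\varphi(s)]$, and Fubini (justified by (c)) turns $A$ into $\int_s^t\!\int_a^{\varphi(s)}\frac{\partial f}{\partial\tau}(\tau,y)\,\d y\,\d\tau$. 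For the term $B$ I would first apply the change of variables $y=\varphi(\tau)$ of Lemma~\ref{areaformula} to get $B=\int_s^t f(t,\varphi(\tau))\dot\varphi(\tau)\,\d\tau$; then, using (b) along the fibre $y=\varphi(\tau)$ -- whose legitimacy for a.e.\ $\tau$ with $\dot\varphi(\tau)>0$ is precisely what Corollary~\ref{nullsets} guarantees -- insert $f(t,\varphi(\tau)) = f(\tau,\varphi(\tau)) + \int_\tau^t \frac{\partial f}{\partial\sigma}(\sigma,\varphi(\tau))\,\d\sigma$, swap the order of integration via Fubini, and invoke Lemma~\ref{areaformula} a second time on the inner $\tau$-integral to obtain
\begin{equation*}
B = \int_s^t f(\tau,\varphi(\tau))\dot\varphi(\tau)\,\d\tau + \int_s^t\!\!\int_{\varphi(s)}^{\varphi(\sigma)}\!\! \frac{\partial f}{\partial\sigma}(\sigma,y)\,\d y\,\d\sigma.
\end{equation*}
Adding $A$ and $B$ then collapses $\int_a^{\varphi(s)}+\int_{\varphi(s)}^{\varphi(\sigma)}$ into $\int_a^{\varphi(\sigma)}$ and produces $(\star)$.

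To conclude, the $L^1$-regularity on $[0,T]$ of the second integrand in $(\star)$ is immediate from Fubini and (c), while the first integrand $\tau\mapsto f(\tau,\varphi(\tau))\dot\varphi(\tau)$ inherits $L^1$-regularity from the pointwise bound $|f(\tau,\varphi(\tau))|\le |f(T,\varphi(\tau))|+\int_\tau^T |\partial f/\partial\sigma|(\sigma,\varphi(\tau))\,\d\sigma$: after multiplying by $\dot\varphi(\tau)$ and integrating, the first term becomes $\|f(T,\cdot)\|_{L^1(a,\varphi(T))}$ via Lemma~\ref{areaformula} and (a), and the second is exactly the quantity whose finiteness legitimizes the Fubini exchange in step two. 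Hence $F$ coincides with an element of $AC([0,T])$ and differentiating $(\star)$ at any Lebesgue point of the right-hand side yields \eqref{leibniz}. I expect the main obstacle to lie in the bookkeeping around the term $B$: the composition $f(\cdot,\varphi(\cdot))$ is not pointwise defined in general, so the joint use of Lemma~\ref{areaformula} and Corollary~\ref{nullsets} to give meaning to the algebraic identities, together with a careful check of the absolute-integrability hypothesis that legitimizes Fubini after the change of variables, is the delicate step; once that is handled, the rest reduces to a routine application of Lebesgue's differentiation theorem.
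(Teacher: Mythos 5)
Your argument is correct and follows essentially the same route as the paper's proof: both hinge on using assumption b) to write $f$ at one time as $f$ at a reference time plus the integral of $\partial f/\partial t$, on Lemma~\ref{areaformula} together with Corollary~\ref{nullsets} to give meaning to and to estimate $f(\cdot,\varphi(\cdot))\dot\varphi(\cdot)$, and on Fubini to assemble an integral identity that is then differentiated. The only difference is cosmetic: you anchor the identity at a generic earlier time $s$ and split off the strip $[\varphi(s),\varphi(t)]$, whereas the paper anchors everything at the final time $T$; the ingredients and the delicate points are identical.
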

\begin{proof}
To conclude we need to prove two things :
	\begin{itemize}
		\item[1)] The right-hand side in \eqref{leibniz} belongs to $L^1(0,T)$.
		\item[2)] $\displaystyle F(t)=\int_{a}^{\varphi(T)}\!\!\!\!\!\!\!\!f(T,y)\d y-\int_{t}^{T}\!\!\!f(s,\varphi(s))\dot{\varphi}(s)\d s-\int_{t}^{T}\!\!\!\int_{a}^{\varphi(s)}\frac{\de f}{\de t}(s,y)\d y\d s$, for every $t\in[0,T]$.
	\end{itemize}	
	To prove 1) notice that the integral part in the formula belongs to $L^1(0,T)$ by c) and Fubini's Theorem. To ensure that also $f(\cdot,\varphi(\cdot))\dot{\varphi}(\cdot)\in L^1(0,T)$ we argue as follows:
	\begin{itemize}
		\item[-] By b) we know that for a.e. $y\in[a,\varphi(T)]$ it holds $\displaystyle f(t,y)=f(T,y)-\int_{t}^{T}\frac{\de f}{\de t}(s,y)\d s$ for every $t\in I_y$,
		\item[-] since $\varphi$ is continuous and nondecreasing we know that for a.e. $y\in[\varphi(0),\varphi(T)]$ there exists a unique element of $[0,T]$, denoted by $\varphi^{-1}(y)$, such that $\varphi(\varphi^{-1}(y))=y$ (see the proof of Lemma~\ref{areaformula}).
	\end{itemize}
	Hence $f(\varphi^{-1}(y),y)=f(T,y)-\displaystyle\int_{\varphi^{-1}(y)}^{T}\frac{\de f}{\de t}(s,y)\d s$ for a.e $y\in[\varphi(0),\varphi(T)]$, and so
	\begin{align*}
	\int_{\varphi(0)}^{\varphi(T)}|f(\varphi^{-1}(y),y)|\d y&\le\int_{\varphi(0)}^{\varphi(T)}|f(T,y)|\d y+\int_{\varphi(0)}^{\varphi(T)}\int_{\varphi^{-1}(y)}^{T}\left|\frac{\de f}{\de t}\right|(s,y)\d s\d y\\
	&\le\Vert f(T,\cdot)\Vert_{L^1(a,\varphi(T))}+\left\Vert \frac{\de f}{\de t}\right\Vert_{L^1(\Omega_T^\varphi)}<+\infty.
	\end{align*}
	Using Lemma~\ref{areaformula} and recalling Corollary \ref{nullsets} we deduce:
	\begin{equation*}
	+\infty>\int_{\varphi(0)}^{\varphi(T)}|f(\varphi^{-1}(y),y)|\d y=\int_{0}^{T}|f(s,\varphi(s))|\dot{\varphi}(s)\d s.
	\end{equation*}	
	Now we prove 2). Fix $t\in[0,T]$, then
	\begingroup
	\allowdisplaybreaks
	\begin{align*}
	F(t)&=\int_{a}^{\varphi(t)}f(t,y)\d y=\int_{a}^{\varphi(t)}f(T,y)\d y-\int_{a}^{\varphi(t)}\int_{t}^{T}
	\frac{\de f}{\de t}(s,y)\d s\d y\\
	&=\int_{a}^{\varphi(T)}f(T,y)\d y-\int_{\varphi(t)}^{\varphi(T)}f(T,y)\d y-\int_{t}^{T}\int_{a}^{\varphi(t)}
	\frac{\de f}{\de t}(s,y)\d y\d s\\
	&=\int_{a}^{\varphi(T)}\!\!\!f(T,y)\d y-\int_{t}^{T}\!\!\int_{a}^{\varphi(s)}\frac{\de f}{\de t}(s,y)\d y\d s-\int_{\varphi(t)}^{\varphi(T)}\!\!\!f(T,y)\d y+\int_{t}^{T}\!\!\int_{\varphi(t)}^{\varphi(s)}\frac{\de f}{\de t}(s,y)\d y\d s.
	\end{align*}
	\endgroup
	So we conclude if we prove $\displaystyle-\int_{\varphi(t)}^{\varphi(T)}\!\!\!f(T,y)\d y+\int_{t}^{T}\!\!\int_{\varphi(t)}^{\varphi(s)}\frac{\de f}{\de t}(s,y)\d y\d s=-\int_{t}^{T}\!\!\!f(s,\varphi(s))\dot{\varphi}(s)\d s$. This is true by the following computation:
	\begin{align*}
	&\quad\,\int_{t}^{T}\int_{\varphi(t)}^{\varphi(s)}\frac{\de f}{\de t}(s,y)\d y\d s=\int_{\varphi(t)}^{\varphi(T)}\int_{\varphi^{-1}(y)}^{T}\frac{\de f}{\de t}(s,y)\d s\d y\\
	&=\int_{\varphi(t)}^{\varphi(T)}f(T,y)\d y-\int_{\varphi(t)}^{\varphi(T)}f(\varphi^{-1}(y),y)\d y=\int_{\varphi(t)}^{\varphi(T)}f(T,y)\d y-\int_{t}^{T}f(s,\varphi(s))\dot{\varphi}(s)\d s.
	\end{align*}
	All the equalities are justified by part 1), Lemma~\ref{areaformula} and Corollary \ref{nullsets}.
\end{proof}
\begin{rmk}
	We can replace assumption a) in Theorem~\ref{differentiation} by the weaker
	\begin{itemize}
		\item[a$'$)] $f(T,\cdot)\in L^1(a,\varphi(T))$.
	\end{itemize}
	Indeed exploiting b) and c) one can recover a) from a$'$).
\end{rmk}
\begin{rmk}\label{remksobolev}
	If for some $p\in[1,+\infty]$ the function $f$ in Theorem~\ref{differentiation} satisfies
	\begin{itemize}
		\item[$\alpha$)]$f(t,\cdot)\in L^p(a,\varphi(t))$  for every $t\in[0,T]$,
		\item[$\beta$)] $f(\cdot,y)\in W^{1,p}(I_y)$ for a.e. $y\in[a,\varphi(T)]$, 
		\item[$\gamma$)] $\displaystyle\frac{\de f}{\de t}\in L^p(\Omega_T^\varphi)$,
	\end{itemize}
	then the function $F$ belongs to $W^{1,p}(0,T)$ and the same formula for the derivative holds. As in Remark~\ref{sobolev}, for the validity of this fact we cannot replace $\varphi\in C^{0,1}([a,b])$ by $\varphi\in AC([a,b])$.
\end{rmk}

\section*{Acknowledgements}
The authors wish to thank Prof. Gianni Dal Maso and Giuliano Lazzaroni for many helpful discussions on the topic. A special thank goes also to an anonymous reviewer for the correct interpretation of the model as a horizontally loaded bar. This material is based on work supported by the INdAM-GNAMPA Project 2018 ``Analisi variazionale per difetti e interfacce nei materiali''.

	\bigskip
	
	\bibliographystyle{siam}

\end{document}